\newcommand{\bx}{\mathbf{x}}
\newcommand{\ba}{\mathbf{a}}
\newcommand{\be}{\mathbf{e}}
\newcommand{\bw}{\mathbf{w}}
\newcommand{\bv}{\mathbf{v}}
\newcommand{\bz}{\mathbf{z}}
\newcommand{\br}{\mathbf{r}}
\newcommand{\bp}{\mathbf{p}}
\newcommand{\bu}{\mathbf{u}}
\newcommand{\commentout}[1]{}
\newcommand{\RR}{\mathbb{R}}
\newcommand{\NN}{\mathbb{N}}
\newcommand{\EE}{\mathbb{E}}
\newcommand{\PP}{\mathbb{P}}
\newcommand{\mone}{\mathds{1}}
\newcommand{\hf}{\widehat{f}}
\newcommand{\hg}{\widehat{g}}
\newcommand{\hPhi}{\widehat{\Phi}}
\newcommand{\tPhi}{\widetilde{\Phi}}
\newcommand{\hV}{\widehat{V}}
\newcommand{\hG}{\widehat{G}}
\newcommand{\tH}{\widetilde{H}}
\newcommand{\hbphi}{\widehat{\boldsymbol\phi}}
\newcommand{\bphi}{{\boldsymbol\phi}}
\newcommand{\tf}{\widetilde{f}}
\newcommand{\tg}{\widetilde{g}}
\newcommand{\tbx}{\widetilde\bx}
\newcommand{\calS}{\mathcal{S}}
\newcommand{\calA}{\mathcal{A}}
\newcommand{\calF}{\mathcal{F}}
\newcommand{\calB}{\mathcal{B}}
\newcommand{\tr}{\mathrm{tr}}
\newcommand{\var}{\mathrm{var}}
\newcommand{\cov}{\mathrm{Cov}}
\newcommand{\proj}{{\rm Proj}}
\newcommand{\supp}{{\rm supp}}
\newcommand{\Vfij}{V_f(\bx_i,\bx_j)}
\newcommand{\Vfijgamma}{V_f(\bx_i,\bx_j,r)}
\newcommand{\Vyijgamma}{V_y(\bx_i,\bx_j,r)}
\newcommand{\hVyijgamma}{\widehat{V}_y(\bx_i,\bx_j,r)}
\newcommand{\hn}{\widehat{n}}
\newcommand{\bnoise}{b^2_{\mathrm{bias}}}
\newcommand{\Vol}{{\rm Vol}}
\newtheorem{thm}{Theorem}
\newtheorem{lem}{Lemma}
\newtheorem{prop}{Proposition}
\newtheorem{coro}{Corollary}
\newtheorem{defi}{Definition}
\newtheorem{assum}{Assumption}
\newtheorem{condi}{Condition}
\newtheorem{example}{Example}
\newtheorem*{example*}{Example}
\DeclareMathOperator*{\argmin}{arg\,min}
\begin{document}
\title{Learning functions varying along a central subspace}
\date{}
\author{
Hao Liu\thanks{Department of Mathematics, Hong Kong Baptist University, Kowloon Tong, Hong Kong (Email: {haoliu@hkbu.edu.hk}).}\ \ \ \
Wenjing Liao\thanks{School of Mathematics, Georgia Institute of Technology, 686 Cherry Street, Atlanta, GA 30332 USA (Email: {wliao60@gatech.edu}). This research is supported by NSF DMS 1818751 and DMS 2012652.}

}
\maketitle

\begin{abstract}
%!TEX root = main.tex

Many functions of interest are in a high-dimensional space but exhibit low-dimensional structures. This paper studies regression of a $s$-H\"{o}lder function $f$ in $\mathds{R}^D$ which varies along a central subspace of dimension $d$ while $d\ll D$. A direct approximation of $f$ in $\mathds{R}^D$ with an $\varepsilon$ accuracy requires the number of samples $n$ in the order of $\varepsilon^{-(2s+D)/s}$. %by the well-known curse of dimensionality.
In this paper, we analyze the Generalized Contour Regression (GCR) algorithm for the estimation of the central subspace and use piecewise polynomials for function approximation.  GCR is among the best estimators for the central subspace, but its sample complexity is an open question. We prove that GCR leads to a mean squared estimation error of $O(n^{-1})$ for the central subspace, if a variance quantity is exactly known. The estimation error of this variance quantity is also given in this paper. 
The mean squared regression error of $f$ is proved to be in the order of
$\left(n/\log n\right)^{-\frac{2s}{2s+d}}$
where the exponent depends on the dimension of the central subspace $d$ instead of the ambient space $D$. This result demonstrates that GCR is effective in learning the low-dimensional central subspace. We also propose a modified GCR with improved efficiency.
The convergence rate is validated through several numerical experiments.

%Our modified GCR improves the efficiency over the original GCR and leads to an mean squared estimation error of $O(n^{-1})$ for the central subspace, when $n$ is sufficiently large.

%Consider a $D$ dimensional $s$-H\"{o}lder function $f$ which is only varying along a central subspace of dimension $d$ with $d\ll D$. The well known curse of dimensionality implies that to achieve fixed accuracy, directly approximating $f$ in $D$ dimensional space requires the number of samples to grow exponentially in $D$. To resolve this issue, in this work, we combine generalized contour regression (GCR) and piecewise polynomial regression to approximate $f$. GCR introduced in \cite{li2005contour} is among the best dimension reduction methods with superior performance. However, its data structure is too complicated and the error behavior is not analyzed. Our method contains two ingredients: first we estimate a basis of the central subspace using a modified generalized contour regression (GCR); second we approximate the function on the estimated central subspace by piecewise polynomials. Under appropriate condition, we have proved that (i) our modified GCR provides central subspace estimation error of $O(n^{-1/2})$ and (ii) the regeression error of $f$ is in the order of $\max\left(\left(\log(n)/n\right)^{\frac{2s}{2s+d}}, n^{-1/2}\right)$. The regression error only depends on the number of samples, the smoothness of $f$ and dimension of the central subspace. The convergence rate is verified through several numerical experiments. 
\end{abstract}

%!TEX root = main.tex

\section{Introduction}

%The inference of statistical dependencies and the construction of predictive models are among the most fundamental problems in data science.
A vast majority of statistical inference and machine learning problems can be modeled as regression, where  the goal is to estimate an unknown function from a finite number of training samples.
Nowadays, new challenges are introduced to regression and prediction due to the rise of high-dimensional data in many fields of contemporary science.
The well-known curse of dimensionality implies that, in order to achieve a fixed accuracy in prediction, the number of training data must grow exponentially with respect to the data dimension, which is beyond practical applications.

Fortunately, functions of interest in applications often exhibit low-dimensional structures. In many situations, the response may depend on few variables, or a low-dimensional subspace. For example, in Bioinformatics, the cDNA microarray data have thousands of dimension but an effective classification of tumor types only depends on a subspace of dimension one or two \cite{Bura2003}.
In engineering, the coefficients of certain elliptic partial differential equations are parameterized by many variables but has a small effective dimension \cite{constantine2014active}.
In photovoltaic industry, there are five input parameters in the single-diode solar cell model while the maximum power output only depends on a linear combination of these five parameters \cite{constantine2015discovering}. Similar low-dimensional models appear in optimization \cite{gill1981practical}, optimal control \cite{zhou1996robust}, uncertainty quantification \cite{Babuska2004}, text classification \cite{kim2005dimension} and biomedicine \cite{Pfeiffer2008,Pfeiffer2012}.

These applications motivate us to consider the regression model
\begin{equation}
y_i = f(\bx_i) +\xi_i  = g(\Phi^\top \bx_i) + \xi_i,
\label{eqyi}
\end{equation}
where $\bx \in \RR^D$, $\Phi \in \RR^{D \times d}$, $g: \RR^d \rightarrow \RR$ and $\xi_i \in \RR$.  The columns of $\Phi$ consist of an orthonormal basis of a $d$-dimensional subspace, i.e., $\Phi^\top \Phi = I_{d}$. The random variable $\xi_i$ models noise, which is independent of the $\bx_i$'s. In this model, the function $f$ is defined on $ \RR^D$ but only depends on the projection of $\bx$ to the subspace spanned by $\Phi$, denoted by $\calS_\Phi$. Let $\calS_{\Phi}$ be such a space of minimum dimension: $g(\bv^\top\bz)$ is not a constant function for any $\bv\in \calS_{\Phi}$. Then $\calS_{\Phi}$ is called the central subspace \cite{cook2002dimension}, or the Effective Dimension-Reduction subspace \cite{li1991sliced,dalalyan2008new,xia2009adaptive}. In other words, the function $f$ only varies along the central subspace, and remains the same along any orthogonal direction of the central subspace. This model is also called the single-index model for $d=1$ and the multi-index model for $d\ge 2$.

%Dimension reduction via identifying central subspace has been used in many disciplines. In \cite{Bura2003}, one used such method to classify tumor classes from cDNA microarray data in which the given cDNA microarray data is of dimension of thousands and the active space turns out to be of dimension one or two. In \cite{constantine2014active}, one estimated the central subspace of coefficient of elliptic partial differential equation in which the given coefficient is parameterized by 100 variable and its central subspace is of dimension 2. Smilar dimension reduction related problems have been met in optimization \cite{gill1981practical}, optimal control \cite{zhou1996robust}, uncertainty quantification \cite{Babuska2004} and biomedicine \cite{Pfeiffer2008,Pfeiffer2012}.

% ; (ii) for any $ \bv$ in the column space of $\Phi$, $g(\bv^\top\bx)$ is not a constant. The second condition ensures such $\Phi$ has the smallest dimension.

%Application: search for biomarkers of diagnosis of disease \cite{Pfeiffer2008,Pfeiffer2012},tumor class prediction \cite{Bura2003}, optimization \cite{gill1981practical}, optimal control \cite{zhou1996robust},

The goal of regression is to estimate the function $f$ from $n$ samples of data $\{(\bx_i,y_i)\}_{i=1}^n$ where the $\bx_i$'s are independently drawn from a probability measure $\rho$ in $\RR^D$. Given data and the a prior knowledge that $f$ varies along a $d$-dimensional central subspace, we aim at constructing an empirical estimator  $\hf: \RR^{D } \rightarrow \RR$ and studying the Mean Squared Error (MSE) $\EE_{\{(\bx_i,y_i)\}_{i=1}^n} \|\hf - f\|_{L^2(\rho)}$.

The central interest of this problem is to estimate the central subspace $\Phi$. A class of methods is based on the gradient $\nabla f(\bx)=\Phi\nabla g(\Phi^\top\bx)$.
When $d = 1$, $\nabla f$ is proportional to the $\Phi$ direction, which allows one to estimate $\Phi$ from average of the empirical gradients \cite{Powell1989,Hardle1989,hardle1993sensitive}. When $d \ge 2$, the covariance matrix of $\nabla f$ is given by $\Phi \int \nabla g(\Phi^\top\bx) [\nabla g(\Phi^\top\bx) ]^\top \rho(d\bx) \Phi^\top$, which gives an estimate of $\Phi$ as the eigenspace associated with the top $d$ eigenvalues of this covariance matrix \cite{Hristache2001,constantine2014active}. If the gradient can be accurately estimated, these gradient-based methods lead to a $\sqrt{n}$-consistent estimation of $\Phi$ \cite{hardle1993sensitive,constantine2014active}. In general, the gradient estimation in $\RR^D$ requires an exponentially large number of samples in dimension $D$.

 %To estimate $\Phi$, one can estimate the gradient of $f$ at every samples and then take average. One well known method is average derivative method (ADE) introduced in \cite{Powell1989} and \cite{Hardle1989} which is dedicated to the case when  $\Phi\in \RR^{D\times 1}$, known as single index model. It is shown in \cite{hardle1993sensitive} that ADE leads to a $\sqrt{n}$-consistency estimator of $\Phi$. To estimate the gradient, the authors used high-dimensional kernel smoothing which still suffers curse of dimensionality if $d$ is large. In \cite{Hristache2001}, ADE was improved by iteratively lowering the kernel dimension.

The single-index model with $d=1$ has been extensively studied in literature. In this case, $g: \RR\rightarrow \RR$ is called the link function. The minimax mean squared regression error while the link function belongs to the $s$-H\"{o}lder class was proved to be $O(n^{-2s/(2s+1)})$ \cite{gaiffas2007optimal,lepski2014adaptive,chesneau2007regression}.
%which answers the conjecture in \cite{stone1982optimal}.In \cite{lepski2014adaptive}, the minimax $L_p$ regression error with $p\geq 1$ was proved to be $O((n/\log n)^{-s/(2s+1)})$.
These results demonstrate that the optimal algorithm can automatically adapt to the central subspace of dimension $1$. For estimation,  a lot of methods based on non-convex optimization have been proposed \cite{lepski2014adaptive,gyorfi2006distribution,ichimura1993semiparametric,xia2002adaptive} while achieving the global minimum is not guaranteed. %It was proved in \cite[Chapter 22]{gyorfi2006distribution} that minimizing the empirical risk over the central subspace of dimension $1$ and piecewise polynomial approximations to $g$ simultaneously gives rise to the MSE of $O((n/\log n)^{-2s/(2s+1)})$. 
{It was proved in \cite[Chapter 22]{gyorfi2006distribution} that minimizing the empirical risk over the central subspace of dimension $1$ and approximating $g$ with piecewise polynomials simultaneously give rise to the MSE of $O((n/\log n)^{-2s/(2s+1)})$.} The main challenge of this approach is to obtain the global minimum due to non-convexity in the optimization.
In the context of regression with point queries, an adaptive query algorithm was proposed for the single-index model in \cite{Cohen2012} with performance guarantees.  This algorithm was generalized to the multi-index model in \cite{Fornasier2012}. 
In the standard regression setting, adaptive queries are not allowed, and the samples are usually given before learning starts.

Estimating the central subspace is related with sufficient dimension reduction in statistics. A class of methods related with inverse regression has been developed to estimate the central subspace $\Phi$ (see \cite{Ma2013,li2018sufficient} for a comprehensive review).
Sliced Inverse Regression (SIR) \cite{Li1991} is the first and most well known method. The term \lq\lq inverse regression\rq\rq\ refers to the conditional expectation $\EE(\bx|y)$. In SIR, the central subspace is estimated as the eigenspace associated with the top $d$  eigenvalues of $\cov(\EE(\bx|y)-\EE(\bx))$.
Similar techniques include kernel inverse regression \cite{Zhu1996}, parametric inverse regression \cite{Bura2001} and canonical correlation estimator \cite{fung2002dimension}. These methods are referred as first-order methods since the first-order statistical information is utilized, such as the conditional mean. Their performance crucially depends  on the function $g$ and the distribution of data.
%When $g$ is symmetric about $\mathbf{0}$, all the first-order methods can not recover any direction in the central subspace.
If $g$ is symmetric about $\mathbf{0}$ along some directions, then those directions can not be recovered by the first-order methods.
 %If the regression function is symmetric about 0 along some directions in the central subspace, first order methods cannot recover these directions. In this situation, they can only identify a proper subset of the central subspace.
This issue about symmetry is better addressed by second-order methods which utilize the variance and covariance information of data. Popular second-order methods include sliced
inverse variance estimation \cite{cook1991discussion}, sliced average variance estimation (SAVE)  \cite{cook1991sliced,dennis2000save},
average variance estimation (MAVE) \cite{xia2002adaptive}, contour regression \cite{li2005contour}, directional regression \cite{li2007directional}, principal Hessian directions \cite{li1992principal}, a hybrid  SIR and SAVE \cite{zhu2007hybrid}, etc. %Many of these methods succeed under appropriate assumptions, while these assumptions can be violated when $g$ is not monotonic. 
New methods are also developed in a multiscale framework \cite{lanteri2020conditional,klock2021estimating}. For the single-index model, Smallest Vector Regression (SVR) is proposed in \cite{lanteri2020conditional} in a multiscale framework, which combines the idea of SIR and SAVE. A performance analysis is provided for the index estimation and regression, while the assumptions are favorable to monotonic $g$.

This paper focuses on a second-order method called Generalized Contour Regression (GCR) introduced by Li, Zha and Chiaromonte \cite{li2005contour}. In comparison with SIR and SAVE mentioned above, GCR has advantages in the case where $g$ is not monotonic. Empirical experiments have demonstrated the success of GCR for the estimation of the central subspace, but its sample complexity is still not well understood yet. 
In this paper, we analyze the error behavior of the regression scheme which consists of GCR for the central subspace estimation and piecewise polynomial approximation of $g$. We also propose a modified GCR with improved efficiency.
%This paper focuses on a second-order method called Generalized Contour Regression (GCR) introduced by Li, Zha and Chiaromonte \cite{li2005contour}. In comparison with SIR and SAVE mentioned above, GCR does not require $g$ to be monotonic. Empirical experiments suggest that GCR is among the best estimators for the central subspace, but its sample complexity is not well understood yet. In this paper, we consider a modified version of GCR which is more efficient than the original GCR. Our regression scheme consists of the modified GCR for the central subspace estimation and piecewise polynomial approximation of $g$.
Our contributions are:

%GCR is among the best dimension reduction methods according to empirical experiments. In \cite{li2005contour}, Simple Contour Regression (SCR) and GCR are proposed. But the authors used U-statistics and central limit theorem to prove the $O(n^{-1/2})$ consistency of central subspace approximation error of SCR only. The approximation error of GCR is too complicated and was not analyzed. In this work, focus on model (\ref{eqyi}), we use modified GCR to estimate central subspace. After the central subspace is estimated, we use piecewise polynomial to approximate the function $g$. Our contribution includes:
\begin{enumerate}[(i)]
  \setlength\itemsep{-0.05cm}
	\item We prove that GCR estimates the central subspace with a mean squared estimation error of $O(n^{-1})$, if a variance quantity is exactly known. The estimation error of this variance quantity is also given in this paper.

	\item Our regression scheme gives rise to the MSE of $f$ in the order of $\left(n/\log n\right)^{-\frac{2s}{2s+d}}$. This demonstrates that the MSE decays exponentially with an exponent depending on the dimension of the central subspace $d$, instead of the ambient dimension $D$.
	\item Our modified GCR improves over the original GCR in efficiency. Numerical experiments demonstrate that the modified GCR has the same convergence rate as that  of the original GCR in our statistical theory.

%\item When $n$ is sufficiently large, regression by piecewise polynomials following GCR gives rise to mean squared error for $f$ in the order of $\max\left(\left(\log(n)/n\right)^{\frac{2s}{2s+d}}, n^{-1/2}\right)$, which depends on the dimension of the central subspace $d$. This is a big improvement over estimation in $\mathds{R}^D$.
\end{enumerate}
This paper is organized as follows: We first introduce SCR and GCR in Section \ref{sec.gcr}. Our regression scheme and main results are stated in Section \ref{sec.MainResults}. Numerical experiments are provided in Section \ref{sec.numerical} to validate our theory. We present proofs  in Section \ref{sec.mainproof} and  conclude in Section \ref{sec.conclusion}.

We use lowercase bold letters and capital letters  to denote vectors and matrices respectively. $\|\bx\|$ is the Euclidean norm of the vector $\bx$ and $\|A\|$ is the spectral norm of the matrix $A$. For two square matrices $A$ and $B$ of the same size, $A\preceq B$ means $B-A$ is positive semi-definite.
%$B_R(\mathbf{0}):= \{\bx: \|\bx\| \le R\}$ is the Euclidean ball of radius R.
We use $\NN_0$ to denote nonnegative integers. For a function $g$, $\supp(g)$ denotes the support of $g$. For $x\in \RR$, $\lfloor x\rfloor$ denotes the largest integer that is less than or equal to $x$ and $\lceil x\rceil$ denotes the smallest integer that is greater than or equal to $x$. Throughout the paper, we use $\calS_{\Phi}$ and $\calS^\perp_{\Phi}$ to denote the central subspace and its orthogonal complement, respectively.

%!TEX root = main.tex

\section{Central subspace and contour regression}
\label{sec.gcr}
The model in (\ref{eqyi}) has an ambiguity since the solutions are not unique. The columns of $\Phi$ form an orthonormal basis of the central subspace, while the choice of orthonormal basis is not unique. This gives rise to an ambiguity between the basis $\Phi$ and the function $g$, which can be characterized by the following lemma:

%estimating the central subspace is equivalent to finding a $\Phi$ whose columns form an orthonormal basis of the central subspace. However, for a fixed central subspace, such $\Phi$ is not unique. Different choices of $\Phi$ lead to different $g$. This is demonstrated by the following lemma.
\begin{lem}
  Consider the model in (\ref{eqyi}) where the columns of $\Phi$ form an orthonormal basis of the central subspace. For any orthogonal matrix $Q\in \RR^{d\times d}$, let $\widetilde{\Phi}=\Phi Q$ and $\widetilde{g}(\bz)=g(Q\bz)$ for any $\bz\in \mathds{R}^d$. Then the columns of $\widetilde{\Phi}$ form another orthonormal basis of the central subspace and $g(\Phi^\top\bx)=\widetilde{g}(\widetilde{\Phi}^\top\bx)$.
  \label{lemma.differentBasis}
\end{lem}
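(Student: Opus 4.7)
The plan is to verify the three assertions of Lemma \ref{lemma.differentBasis} by direct linear-algebraic computation, using only the fact that $Q$ is orthogonal (so $Q^T Q = Q Q^T = I_d$) and that $\Phi$ has orthonormal columns ($\Phi^T \Phi = I_d$).

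First, I would check orthonormality of the columns of $\widetilde{\Phi} = \Phi Q$. The direct computation $\widetilde{\Phi}^T \widetilde{\Phi} = Q^T \Phi^T \Phi Q = Q^T I_d Q = I_d$ does the job. Next, I would argue that $\widetilde{\Phi}$ spans the same $d$-dimensional subspace as $\Phi$, i.e., $\calS_\Phi$. This follows from the invertibility of $Q$: the columns of $\widetilde{\Phi} = \Phi Q$ lie in the column span of $\Phi$, and conversely $\Phi = \widetilde{\Phi} Q^T$ shows that the columns of $\Phi$ lie in the column span of $\widetilde{\Phi}$. Hence the column spans coincide, so the columns of $\widetilde{\Phi}$ are an orthonormal basis of the active subspace $\calS_\Phi$.

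Finally, I would establish the functional identity by chaining the definitions:
\begin{equation*}
\widetilde{g}(\widetilde{\Phi}^T \bx) = \widetilde{g}\bigl((\Phi Q)^T \bx\bigr) = \widetilde{g}(Q^T \Phi^T \bx) = g\bigl(Q (Q^T \Phi^T \bx)\bigr) = g(\Phi^T \bx),
\end{equation*}
where the third equality uses the definition $\widetilde{g}(\bz) = g(Q\bz)$ with $\bz = Q^T \Phi^T \bx$, and the last uses $Q Q^T = I_d$.

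There is no real obstacle here; the lemma is a routine consistency statement that simply records the rotational ambiguity in the parametrization $(\Phi, g)$. The only thing to be careful about is bookkeeping of the transpose when passing $Q$ from $\Phi$ to the argument of $g$, and to note that the statement requires nothing about $g$ beyond being well-defined on $\mathbb{R}^d$.
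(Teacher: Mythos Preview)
Your proof is correct. The paper actually states Lemma~\ref{lemma.differentBasis} without proof, treating it as an elementary observation; your direct verification of orthonormality, equality of column spans via $Q$'s invertibility, and the functional identity via $QQ^T=I_d$ is exactly the routine check one would expect.
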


Lemma \ref{lemma.differentBasis} shows that the representation of $f$ is not unique. If another set of orthonormal basis is picked, the function $g$ changes accordingly. %Suppose the columns of $\hPhi \in \RR^{D \times d}$ contain an orthonormal basis of the recovered central subspace. 
In this paper, we aim to recover one set of orthonormal basis $\hPhi$ and the corresponding function $\hg$.
The subspace estimation error and regression error are represented by $\|\proj_{\hPhi} -\proj_{\Phi} \|$ and
\begin{equation}
\|\hf - f\|_{L^2(\rho)} := %\inf_{\substack{\Phi: \text{ an orthonormal basis of } \calS_{\Phi}\\  g:\ \RR^d \rightarrow \RR, \ f(\cdot) = g(\Phi^\top \cdot)} }
\int |\hg(\hPhi^\top \bx ) - g(\Phi^\top \bx)|^2 \rho(d\bx)
\label{ferrorinf}
\end{equation}
respectively, which are invariant to the choice of orthonormal basis.
%Notice that the central subspace has infinitely many sets of orthonormal basis, which gives non-unique representations of $f$ (See Lemma \ref{lemma.differentBasis}). 
 After a change of basis, 
%we can refer to the columns of $\Phi$ and $\hPhi$ as the canonical vectors.
%Let $\frac \pi 2 \ge \theta_1 \ge \theta_2 \ge \ldots \ge \theta_d\ge 0$ be the canonical angles between the subspace spanned by $\Phi$ and $\hPhi$. % In our analysis, we can refer to the columns of $\Phi$ and $\hPhi$ as the canonical vectors.
%While the function $f$ only depends on $\proj_{\Phi} \bx$ and is invariant to the choice of orthonormal basis, we can refer to the columns of $\Phi$ and $\hPhi$ as the canonical vectors.
%Specifically, we let $\hPhi = [\hbphi_1 \ \hbphi_2 \ \ldots \ \hbphi_d]$ and $\Phi = [\bphi_1 \ \bphi_2 \ \ldots \ \bphi_d]$ such that $\cos \theta_k =\bphi_k^\top \hbphi_k$ for $k=1,\ldots,d$. According to \cite[Theorem 5.5]{stewart1990matrix}, the largest canonical angle is related with orthogonal projections such that $\sin \theta_1 \leq \|\proj_{\hPhi} -\proj_{\Phi}\|$, which implies
we can assume that the columns of $\hPhi$ are carefully chosen such that 
\begin{align}
    \|\hPhi-\Phi\|^2\leq \sqrt 2 \|\proj_{\hPhi}-\proj_{\Phi}\|
    \label{phiproj}
  \end{align}
  according to \cite[Theorem 2.2.1]{chen2020spectral}.

%The proof of Lemma \ref{lemma.differentBasis} is straight forward and is omitted here. Lemma \ref{lemma.differentBasis} shows that for any function of the form $f(\bx)=g(\Phi^\top\bx)$, one can always find some different but equivalent expressions. In this work, our aim is to find one estimation in the form of $g(\Phi^\top\bx)$ of $f$. Note that no matter which $\Phi$ one finds, the projection $\Phi\Phi^\top$ is unique. One can use the mismatch between $\Phi\Phi^\top$ and the real projection matrix to qualify the estimated $\Phi$.

%In this paper, we assume $f(\bx)$ is bounded -- there exists $M>0$ such that $|f(\bx)|\le M$ for all $\bx$ in the domain of $f$, and noise is bounded such that $\xi_i \in [-\sigma,\sigma]$ for some $\sigma >0$.

%Such $\Phi$ should satisfy two conditions: (i) $\Phi^\top\Phi=I$; (ii) $g(\bw^\top\bx)$ is not constant $\forall \bw\in\calS_\Phi$. The second condition assures $\Phi$ has the minimal number of columns and thus $\calS_\Phi$ is the central subspace. If condition (ii) is violated, i.e., along some direction ${\bf w}\in \calS_\Phi$, $g(\bw^\top\bx)$ is constant, then one can find a smaller subspace $\widetilde{\calS}$ such that $\calS_{\Phi}=\widetilde{\calS}\oplus\{\bw\}$, containing the central subspace. Thus $\calS_\Phi$ is not the subspace with minimal dimension.
\subsection{Simple contour regression}
We start with Simple Contour Regression (SCR) \cite{li2005contour} which utilizes the fact that the contour directions of $f$ are orthogonal to the central subspace. %The orthogonal complement of the central subspace, denoted by $\calS_{\Phi}^\perp$, can be identified from all the $\tilde \bx -\bx$ directions such that $|\tilde y - y|$ is small.
Let $\alpha>0$ and define the following conditional covariance matrix:
$$
K(\alpha)=\EE\left[ (\tilde{\bx}-\bx)(\tilde{\bx}-\bx)^\top| |\tilde{y}-y|\leq  \alpha\right],
$$
where $(\tilde{\bx},\tilde{y})$ and $(\bx,y)$ are two independent samples.

When $d=1$ and $g$ is a monotonic function, we expect many of the $(\tilde\bx,\bx)$ pairs satisfying $|\tilde y - y| \le \alpha$ will have $|\Phi^\top \tilde\bx - \Phi^\top \bx|$ small while $|\bw^\top \tilde\bx - \bw^\top \bx|$ can be arbitrarily large for any $\bw \in \calS_\Phi^\perp$. %with $\calS_\Phi^\perp$ denoting the orthogonal complement of the central subspace.
In this case, most of the $\tilde \bx -\bx$ directions satisfying $|\tilde y - y| \le \alpha$ are aligned with $\calS_\Phi^\perp$.

For example, in Figure \ref{fig.demo.exp}(a), $f(\bx)=e^{x_1}$ with $\bx=(x_1,x_2)\in[-2,2]\times[-2,2]$. This function can be expressed as Model \eqref{eqyi} with $\Phi=[1,0]^\top$, ${\bf w}=[0,1]^\top$, and $g(z) = e^z$. Consider two inputs $\mathbf{p}=(p_1,p_2)$ and $\widetilde{\mathbf{p}}=(\widetilde{p}_1,\widetilde{p}_2)$ satisfying $|f(\mathbf{p})-f(\widetilde{\mathbf{p}})|\leq\alpha$. Since $e^{-2}|p_1-\widetilde{p}_1|\leq |e^{p_1}-e^{\widetilde{p}_1}|=|f(\mathbf{p})-f(\widetilde{\mathbf{p}})|$, the condition $|f(\mathbf{p})-f(\widetilde{\mathbf{p}})|\leq \alpha$ implies $|\Phi^\top(\widetilde{\mathbf{p}}-\mathbf{p})|=|p_1-\widetilde{p}_1|<e^2\alpha$. On the other hand, $|{\bf w}^\top(\widetilde{\mathbf{p}}-\mathbf{p})|=|p_2-\widetilde{p}_2|\in[0,4]$, which is independent of $\alpha$. In Figure \ref{fig.demo.exp}(b), 200 samples are uniformly drawn from $[-0.5,0.5]\times[-0.5,0.5]$. The $(\bx,\tbx)$ pair is connected if $|f(\bx)-f(\tbx)|\leq0.01$. We observe that, almost all connections by SCR in Figure \ref{fig.demo.exp}(b) are aligned with the $\bw$ direction.

SCR estimates the central subspace from the smallest $d$ eigenvectors of $K(\alpha)$. The success of SCR is guaranteed with the following condition:
\begin{condi}
\label{assum-scr}
There exists $\alpha_c>0$ such that for any $\alpha \in(0, \alpha_c)$ and unit vectors $\bv\in \calS_\Phi$, $\bw \in \calS_\Phi^\perp$, %with $\|{\bf v}\|=\|{\bf w}\|=1$,
the following holds
  $$
  \var\left[ {\bf w}^\top(\tbx-\bx)| |\tilde{y}-y|\leq  \alpha\right]>\var\left[ {\bf v}^\top(\tbx-\bx)| |\tilde{y}-y|\leq \alpha\right].
  $$
\end{condi}
%Condition \ref{assum-scr} together with an elliptical distribution (defined below) of $\bx$ along $\calS^{\perp}_{\Phi}$ assures SCR to recover the central subspace.
We define an elliptical distribution \cite{li2018sufficient} as
\begin{defi}[Elliptical distribution]
\label{def.elliptic}
%Assume $\bx$ has mean ${\bf 0}$.
Let %$\bx \in \RR^D$ be a random vector with 
$\rho$ be a probability measure in $\RR^D$ with density function $h$. We say $\rho$ has an elliptical distribution if there exists a positive-definite matrix $A \in \RR^{D\times D}$ such that 
$$
h(\bx)=\kappa(\bx^\top A \bx)
$$
for some function $\kappa:\RR\rightarrow \RR$. When $A=I$, we say $\rho$ has a spherical distribution. 
%For any subspace $\Psi\subset \RR^D$, we say $\rho$ has a symmetric distribution with respect to $\Psi$ if for any vector direction $\bw\in\Psi$ and $\bv\in \Psi^{\perp}$, (i) ${\bf w}^\top(\bx-\EE\bx)$ and $-{\bf w}^\top(\bx-\EE\bx)$ have the same distribution, and (ii) ${\bf w}^\top(\bx-\EE\bx)$ is independent of ${\bf v}^\top(\bx-\EE\bx)$
\end{defi}

%\begin{defi}[Symmetric distribution]
%	\label{def.elliptic}
%	%Assume $\bx$ has mean ${\bf 0}$.
%	Let $\bx \in \RR^D$ be a random vector with probability distribution $\rho$. For any subspace $\Psi\subset \RR^D$, we say $\rho$ has a symmetric distribution with respect to $\Psi$ if for any vector direction $\bw\in\Psi$ and $\bv\in \Psi^{\perp}$, (i) ${\bf w}^\top(\bx-\EE\bx)$ and $-{\bf w}^\top(\bx-\EE\bx)$ have the same distribution, and (ii) ${\bf w}^\top(\bx-\EE\bx)$ is independent of ${\bf v}^\top(\bx-\EE\bx)$
%\end{defi}

%\begin{defi}[Elliptical distribution]
%\label{def.elliptic}
%%Assume $\bx$ has mean ${\bf 0}$.
%Let $\bx \in \RR^D$ be a random vector with probability distribution $\rho$.
%For any unit vector ${\bf v} \in \RR^D$, we say $\rho$ has an elliptical distribution along the direction ${\bf v}$ if ${\bf v}^\top(\bx-\EE\bx)$ and $-{\bf v}^\top(\bx-\EE\bx)$ have the same distribution.
%\end{defi}

It is known that if $\bx$ has a spherical distribution, then $\EE (\bx)=\mathbf{0}$ and $\cov (\bx)=aI$ for some $a>0$, see \cite[Theorem 2.5 and 2.7]{fang2018symmetric}. Condition \ref{assum-scr} together with an spherical distribution of $\bx$ guarantee that the eigenvectors associated with the smallest $d$ eigenvalues of $K(\alpha)$ span $\calS_\Phi$ \cite[Theorem 2.1]{li2005contour}.

\begin{figure}[t!]
  \subfloat[$f(x_1,x_2)=e^{x_1}$]{\includegraphics[width=0.33\textwidth]{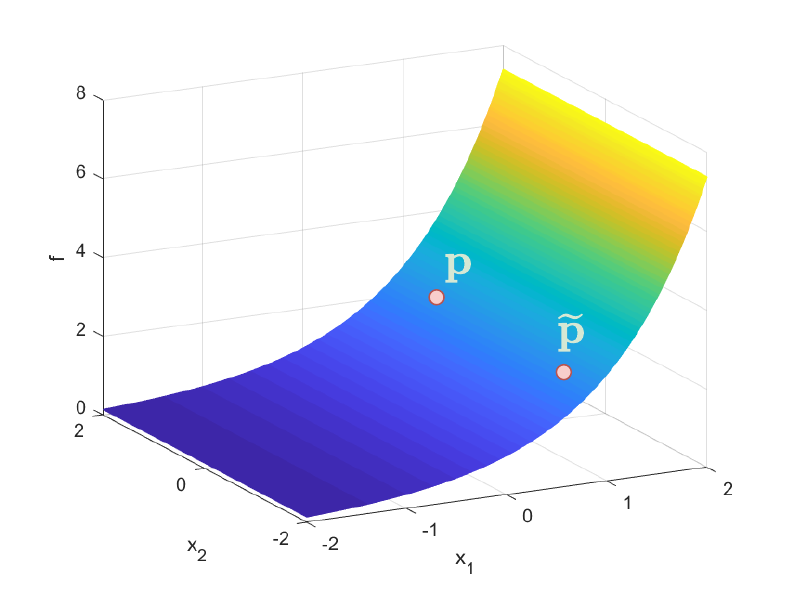}}
  \subfloat[SCR]{\includegraphics[width=0.33\textwidth]{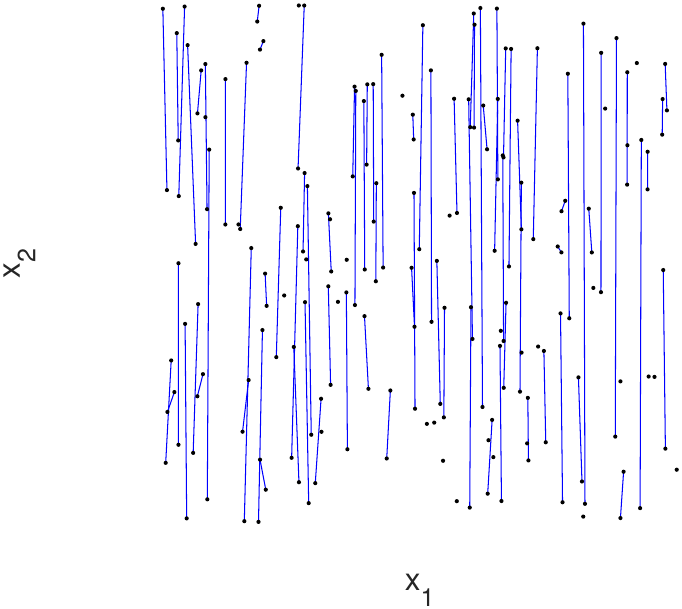}}
  \subfloat[GCR]{\includegraphics[width=0.33\textwidth]{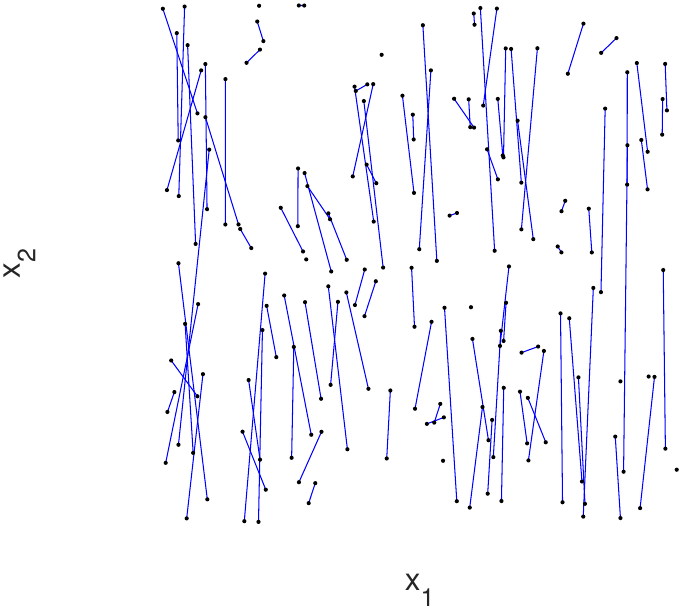}}
  \caption{(a) Function $f(x_1,x_2)=e^{x_1}$. When 200 samples are uniformly drawn in $[-0.5,0.5]\times[-0.5,0.5]$, (b) and (c) display the connected $(\bx,\tbx)$ pairs by SCR (b) and GCR (c), respectively. An $(\bx,\tbx)$ pair is connected by SCR in (b) if $|f(\bx)-f(\tbx)|\leq0.01$ and by GCR in (c) if $\hV_y(\bx,\tbx;r)\leq0.001$ with $r=0.01$. Here $\hV_y(\bx,\tbx;r)$ is an empirical estimator of $V_f(\bx,\tbx)$ to be defined in \eqref{eqhatvy}.}
  \label{fig.demo.exp}
\end{figure}

\begin{figure}[t!]
  \subfloat[$f(x_1,x_2)=x_1^2$]{\includegraphics[width=0.33\textwidth]{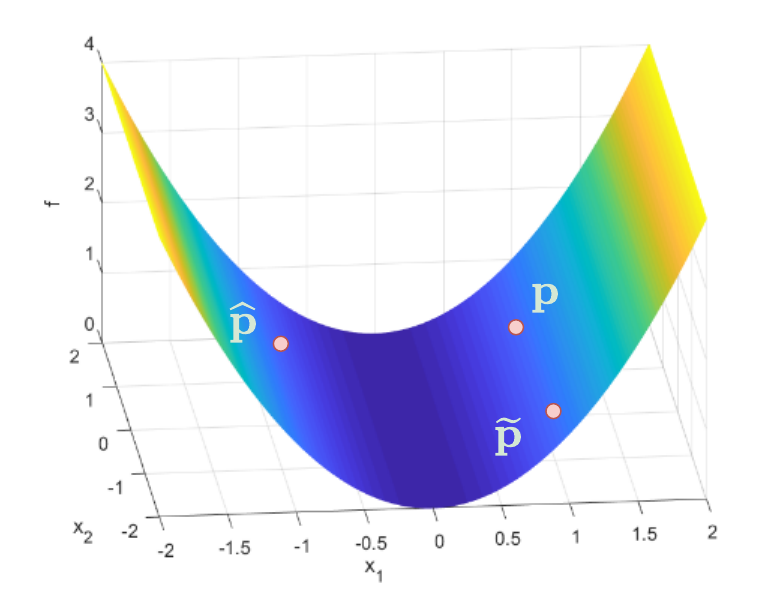}}
  \subfloat[SCR]{\includegraphics[width=0.33\textwidth]{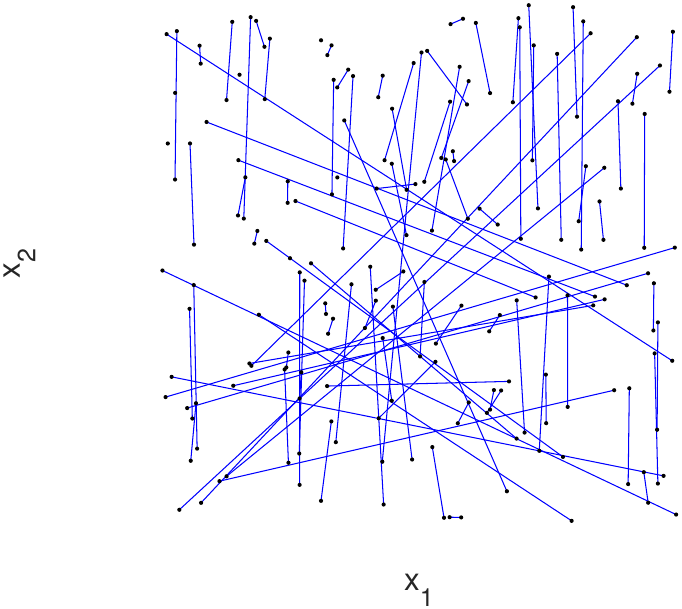}}
  \subfloat[GCR]{\includegraphics[width=0.33\textwidth]{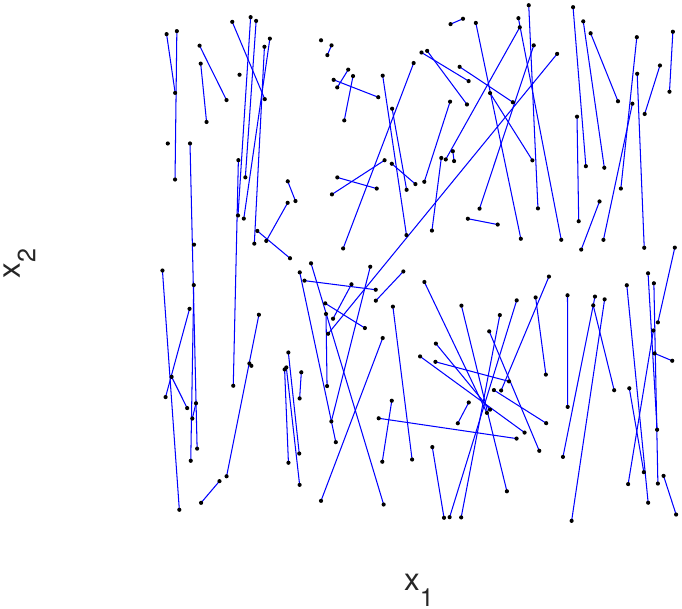}}
  \caption{(a) Function $f(x_1,x_2)=x_1^2$. When 200 samples are uniformly drawn in $[-0.5,0.5]\times[-0.5,0.5]$, (b) and (c) display the connected $(\bx,\tbx)$ pairs by SCR (b) and GCR (c), respectively. An $(\bx,\tbx)$ pair is connected by SCR in (b) if $|f(\bx)-f(\tbx)|\leq0.01$ and by GCR in (c) if $\hV_y(\bx,\tbx;r)\leq0.001$ with $r=0.01$.
  %Here $\hV_y(\bx,\tbx;r)$ is an empirical estimator of $V_f(\bx,\tbx)$ to be defined in \eqref{eqhatvy}.
  %$f(x_1,x_2)=x_1^2$. (a) An example for Assumption \ref{assum-gcr}. (b)-(c): With 200 samples uniformly distributed in $[-0.5,0.5]\times[-0.5,0.5]$, the pairs $(\bx,\tbx)$ used in SCR (b) and GCR (c). The pair $(\bx,\tbx)$ is connected if $|f(\bx)-f(\tbx)|\leq0.01$ in (b) and if $V_y(\bx,\tbx;r)\leq0.001$ with $r=0.01$ in (c). $V_y(\bx,\tbx;r)$ is an empirical estimation of $V_f(\bx,\tbx)$ defined in \eqref{eqhatvy}.
  }
  \label{fig.demo.quad}
\end{figure}

%\begin{figure}[t!]
%  \subfloat[$f(x_1,x_2)=e^{x_1}$]{\includegraphics[width=0.45\textwidth]{figure/exp_high.pdf}}
%  \subfloat[$f(x_1,x_2)=x_1^2$]{\includegraphics[width=0.45\textwidth]{figure/quadra.pdf}}
%  \caption{(a) An example for Condition \ref{assum-scr}; (b) An example for Assumption \ref{assum-gcr}.}
%  \label{fig.demo}
%\end{figure}
%
%\begin{figure}[t!]
%  \subfloat[SCR]{\includegraphics[width=0.45\textwidth]{figure/connection_SCR.eps}}
%  \subfloat[GCR]{\includegraphics[width=0.45\textwidth]{figure/connection_GCR.eps}}
%  \caption{$f(x_1,x_2)=x_1^2$. With 200 samples uniformly distributed in $[-0.5,0.5]\times[-0.5,0.5]$, the pairs $(\bx,\tbx)$ used in SCR (a) and GCR (b). The pair $(\bx,\tbx)$ is connected if $|f(\bx)-f(\tbx)|\leq0.01$ in (a) and if $V_y(\bx,\tbx;r)\leq0.001$ with $r=0.01$ in (b). $V_y(\bx,\tbx;r)$ is an emperical estimation of $V_f(\bx,\tbx)$ defined in \eqref{eqhatvy}.}
%  \label{fig.connection}
%\end{figure}

\subsection{Generalized contour regression}

Condition \ref{assum-scr} does not necessarily hold when $d=1$ and the function $g$ is not monotonic, as well as when $d>1$. For example in Figure \ref{fig.demo.quad} (a), $f(\bx)=x_1^2$ with $\bx=(x_1,x_2)\in[-2,2]\times[-2,2]$. This function can be expressed in Model \eqref{eqyi} with $\Phi=[1,0]^\top$, ${\bf w}=[0,1]^\top$ and $g(z) = z^2$. Let $\mathbf{p}=(p_1,p_2)$ and $\widehat{\mathbf{p}}=(\widehat{p}_1,\widehat{p}_2)$ be two inputs satisfying $p_1>0,\widehat{p}_1<0$, and $|f(\mathbf{p})-f(\widehat{\mathbf{p}})| \le \alpha$ for some small $\alpha$. Due to   symmetry, $|\Phi^\top(\widehat{\mathbf{p}}-\mathbf{p})|=|\widehat{p}_1|+|p_1|$ can be very large, which violates Condition \ref{assum-scr}.
%
%Suppose 200 samples of $(\bx,y)$ are taken while $\bx$ is uniformly distributed in $[-2,2]\times[-2,2]$ and $y =x_1^2$.
When 200 samples are uniformly drawn in $[-0.5,0.5]\times[-0.5,0.5]$,
an $(\bx,\tilde \bx)$ pair is connected by SCR in Figure \ref{fig.demo.quad} (b) if $|f(\bx)-f(\tilde \bx)| \le 0.01$. The ideal connections are along the $x_2$ direction, but SCR gives many misleading connections since $g$ is not monotonic.
When $d>1$, Condition \ref{assum-scr} can be easily violated as well. For any fixed $\bx\in\RR^D$, the contour $\{\tbx| f(\tbx)=f(\bx)\}$ is a curve or a  surface, so $\|\Phi^\top(\tbx-\bx)\|$ is not necessarily small.

Fortunately, most misleading connections can be identified from a large variance of $f$ along the segment between the two inputs. In Figure \ref{fig.demo.quad}, $|f(\mathbf{p})-f(\widehat{\mathbf{p}})|$ is small but the variance of $f$ along the segment between $\mathbf{p}$ and $\widehat{\mathbf{p}}$ is large. This criterion helps to rule out the misleading connection between $\mathbf{p}$ and $\widehat{\mathbf{p}}$.

Replacing the condition of $|y-\tilde y|\le \alpha$ in SCR by a variance condition gives rise to the Generalized Contour Regression (GCR) \cite{li2005contour}.
% to single out the $\bx$ and $\tbx$ that satisfy $|\ty-y| \le \alpha$ while $\|\Phi^\top \tbx-\Phi^\top\bx\|$ is small.
Let $\ell(\bx_i,\bx_j)=\{\bx = (1-t)\bx_i+t\bx_j, t\in [0,1]\}$ be the segment between $\bx_i$ and $\bx_j$. The variance of $f$ along this line is:
$$
V_f(\bx_i,\bx_j)=\var\left(f(\bx)|\bx \in \ell(\bx_i,\bx_j)\right).
$$
In GCR, the covariance matrix is taken to be
\begin{align}
  G(\alpha)=\EE\left[ (\tilde{\bx}-\bx)(\tilde{\bx}-\bx)^\top| V_f(\tbx,\bx)\leq \alpha\right]
  \label{eq-gcrcov}
\end{align}
where $\alpha>0$ is a parameter. In \cite{li2005contour}, the authors assume that for any unit vectors $\bv \in \calS_\Phi$, ${\bf w}\in  \calS_\Phi^\perp$, there is some constant $\alpha$ such that
\begin{align} 
\var\left[ {\bf v}^\top(\tbx-\bx)| V_f(\tilde{\bx},\bx)\leq \alpha \right]<\var\left[ {\bf w}^\top(\tbx-\bx)| V_f(\tilde{\bx},\bx)\leq \alpha\right].
\label{eq.gcr.li}
\end{align}
It was proved that, when $\rho$ has an elliptical distribution, (\ref{eq.gcr.li}) always holds for sufficiently small $\alpha$. In this work, we refine the assumption \eqref{eq.gcr.li} to the following one which requires a gap between the two sides of (\ref{eq.gcr.li}):

\begin{assum}
\label{assum-gcr}
There exist $\alpha_{\rm thresh}>0, c_0>0$ such that
for any $0<\alpha<\alpha_{\rm thresh}$ and unit vectors $\bv \in \calS_\Phi$, ${\bf w}\in  \calS_\Phi^\perp$, the following hold
 \begin{align*}
   \var\left[ {\bf w}^\top(\tbx-\bx)| V_f(\tilde{\bx},\bx)\leq \alpha\right]- \var\left[ {\bf v}^\top(\tbx-\bx)| V_f(\tilde{\bx},\bx)\leq \alpha \right] &\ge c_0.
    \end{align*}
% \begin{align*}
%   \var\left[ {\bf v}^\top(\tbx-\bx)| V_f(\tilde{\bx},\bx)\leq \alpha(\tbx,\bx) \right] &\le c_0,
%\\
%  \var\left[ {\bf w}^\top(\tbx-\bx)| V_f(\tilde{\bx},\bx)\leq \alpha(\tbx,\bx)\right]
% & \ge C_0.
%    \end{align*}

\end{assum}

This assumption with a gap is used to prove an eigengap of the matrix $G(\alpha)$, which is necessary to control the central subspace estimation error.

%Assumption \ref{assum-gcr} is natural for the model in (\ref{eqyi}). It is shown in \cite[Theorem 4.2]{li2005contour} that %as long as $\bx$ is elliptically distributed
%under the model in (\ref{eqyi}), one always has
%$$
%\var\left[ {\bf v}^\top(\tbx-\bx)| V_f(\tilde{\bx},\bx)\leq \alpha \right]<\var\left[ {\bf w}^\top(\tbx-\bx)| V_f(\tilde{\bx},\bx)\leq \alpha\right]
%$$ when $\alpha$ is sufficiently small.

An $(\bx,\tilde \bx)$ pair is said to be connected if $V_f(\bx, \tilde\bx) \le \alpha$. In Figure \ref{fig.demo.quad}, even though $|f(\bp)-f(\widehat{\bp})|$ is small, the variance $V_f(\bp,\widehat{\bp})$ is large. When $\alpha$ is small, the condition of $V_f(\bp,\widehat{\bp})\leq \alpha$ is violated so the $(\bp,\widehat{\bp})$ pair is not connected. We expect all connected pairs by GCR to be aligned with the $x_2$ direction, such as $(\bp, \widetilde\bp)$. For such connected pairs, it is very likely that $p_1\widetilde p_1 >0$, which implies $|\Phi^\top(\widetilde{\bp}-\bp)| = |\widetilde p_1 -p_1| \le \alpha/\min(p_1,\widetilde p_1)$  while $|{\bf w}^\top(\widetilde{\bp}-\bp)| = |\widetilde p_2-p_2| \in[0,4]$ is independent of $\alpha$.
When 200 samples are uniformly drawn in $[-0.5,0.5]\times[-0.5,0.5]$, an $(\bx,\tilde \bx)$ pair is connected by GCR in Figure \ref{fig.demo.quad}(c) if an empirical estimator of $V_f(\bx,\tilde \bx)$ (see \eqref{eqhatvy}) is no more than $0.001$. We observe that, most connections by GCR are along the $x_2$ direction, and many misleading connections by SCR are ruled out.

%On the other hand, consider $\widetilde{P}(\widetilde{p}_1,\widetilde{p}_2)$ with $\widetilde{p}_1>0$ and $|f(P)-f(\widetilde{P})|$ is small. Along $P\widetilde{P}$, $V_f(P,\widetilde{P})$ and $|\Phi^\top(\widetilde{P}-P)|$ are small while $|{\bf w}^\top(\widetilde{P}-P)|\in[0,4]$ independent of $\alpha$. Using 200 uniformly distributed samples and $f(\bx)=x_1^2, \bx=(x_1,x_2)$, the directions $\bx-\tbx$ used by GCR is shown in Figure \ref{fig.connection}(b). Most directions are along $\Phi^{\perp}$.

Assumption \ref{assum-gcr} together with a spherical distribution of $\bx$ imply that, when $\alpha<\alpha_{\rm thresh}$ the eigenspace of $G(\alpha)$ associated with the smallest $d$ eigenvalues is $\calS_\Phi$ and the rest of the eigenvectors span $\calS_\Phi^\perp$.

\begin{prop}
\label{propphi}
Assume $\rho$ has a spherical distribution. Let $\lambda_1 \ge \lambda_2 \ge \ldots \ge \lambda_D$ be the eigenvalues of $G(\alpha)$. Under Assumption \ref{assum-gcr},  the followings hold for any $\alpha \in (0,\alpha_{\rm thresh})$:
\begin{itemize}
\item For any integer $1\leq j\leq D-d$ and $D-d+1\leq k \leq D$, we have $\lambda_j-\lambda_k\geq c_0$.
%$\lambda_j \ge C_0 $ for $ j=1,\ldots,D-d$ and $\lambda_k \le c_0$ for $ k=D-d+1,\ldots,D$.
\item The eigenvectors of $G(\alpha)$ associated with $\{\lambda_j\}_{j=1}^{D-d}$ span $\calS_\Phi^\perp$ and the eigenvectors associated with $\{\lambda_k\}_{k=D-d+1}^D$ span $\calS_\Phi$.
 \end{itemize}
\end{prop}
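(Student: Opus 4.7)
The plan is to diagonalize $G(\alpha)$ in a basis adapted to the orthogonal decomposition $\RR^D = \calS_\Phi \oplus \calS_\Phi^\perp$, show that $G(\alpha)$ is block diagonal in that basis, and then read off the spectrum by applying Assumption~\ref{assum-gcr} to each diagonal block. Concretely, let $\Phi_\perp \in \RR^{D\times(D-d)}$ be any orthonormal basis of $\calS_\Phi^\perp$ so that $[\Phi,\Phi_\perp]$ is orthogonal, and partition
\[
[\Phi,\Phi_\perp]^T G(\alpha)\,[\Phi,\Phi_\perp]=\begin{pmatrix} \Phi^T G(\alpha)\Phi & \Phi^T G(\alpha)\Phi_\perp \\ \Phi_\perp^T G(\alpha)\Phi & \Phi_\perp^T G(\alpha)\Phi_\perp\end{pmatrix}.
\]

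The first step is to bound the diagonal blocks using Assumption~\ref{assum-gcr}. Since $(\bx,\tilde\bx)$ is exchangeable and the event $\{V_f(\bx,\tilde\bx)\le\alpha\}$ is symmetric under the swap, one has $\EE[\bu^T(\tilde\bx-\bx)\mid V_f\le\alpha]=0$ for every $\bu\in\RR^D$, so that $\bu^T G(\alpha)\bu = \var[\bu^T(\tilde\bx-\bx)\mid V_f\le\alpha]$. Applying the two inequalities in Assumption~\ref{assum-gcr} then gives $\bv^T G(\alpha)\bv\le\phi(\alpha)$ for all unit $\bv\in\calS_\Phi$ and $\bw^T G(\alpha)\bw\ge C_0$ for all unit $\bw\in\calS_\Phi^\perp$, which already implies, by the min-max characterization, that $\lambda_{D-d+1}\le\phi(\alpha)$ and $\lambda_{D-d}\ge C_0$.

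The second and main step is to show that the off-diagonal block $\Phi^T G(\alpha)\Phi_\perp$ vanishes, so that the eigenvectors actually lie in $\calS_\Phi$ and $\calS_\Phi^\perp$ respectively (and not merely mixed together with the right eigenvalues). The key observation is that $V_f(\bx,\tilde\bx)$ depends on $(\bx,\tilde\bx)$ only through $(\Phi^T\bx,\Phi^T\tilde\bx)$, since $f=g\circ\Phi^T$ and $\Phi^T\ell(\bx,\tilde\bx)=\ell(\Phi^T\bx,\Phi^T\tilde\bx)$; similarly $\bv^T(\tilde\bx-\bx)$ is a function of $(\Phi^T\bx,\Phi^T\tilde\bx)$ for any $\bv\in\calS_\Phi$. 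Conditioning on the $\sigma$-algebra generated by $(\Phi^T\bx,\Phi^T\tilde\bx)$ and using the tower property,
\[
\bv^T G(\alpha)\bw = \EE\!\left[\bv^T(\tilde\bx-\bx)\,\EE\!\left[\bw^T(\tilde\bx-\bx)\,\big|\,\Phi^T\bx,\Phi^T\tilde\bx\right]\,\Big|\,V_f\le\alpha\right].
\]
The inner conditional expectation is zero: by the independence of $\bx$ and $\tilde\bx$ and the elliptical symmetry of $\rho$ along every $\bw\in\calS_\Phi^\perp$ (Definition~\ref{def.elliptic}), the conditional distribution of $\bw^T\bx$ given $\Phi^T\bx$ is symmetric about its (constant-in-subspace) mean, so the two contributions $\EE[\bw^T\tilde\bx\mid\Phi^T\tilde\bx]$ and $\EE[\bw^T\bx\mid\Phi^T\bx]$ cancel. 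Hence $\Phi^T G(\alpha)\Phi_\perp=0$.

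Once block diagonality is established, the conclusion follows immediately: $G(\alpha)=\Phi G_1\Phi^T+\Phi_\perp G_2\Phi_\perp^T$ with $G_1:=\Phi^T G(\alpha)\Phi\preceq \phi(\alpha)I_d$ and $G_2:=\Phi_\perp^T G(\alpha)\Phi_\perp\succeq C_0 I_{D-d}$; since $\phi(\alpha)<C_0$, the spectrum of $G(\alpha)$ is the disjoint union of the spectra of $G_1$ and $G_2$, the top $D-d$ eigenvalues come from $G_2$ with eigenvectors of the form $\Phi_\perp\bu\in\calS_\Phi^\perp$, and the bottom $d$ eigenvalues come from $G_1$ with eigenvectors of the form $\Phi\bu\in\calS_\Phi$. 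The hard part of the argument is the off-diagonal vanishing: it is where the elliptical assumption is genuinely used, and where one must carefully combine measurability of the conditioning event with respect to $(\Phi^T\bx,\Phi^T\tilde\bx)$ and the conditional symmetry of the $\calS_\Phi^\perp$ marginals.
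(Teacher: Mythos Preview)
Your proposal is correct and follows essentially the same approach as the paper: both block-diagonalize $G(\alpha)$ with respect to $\calS_\Phi\oplus\calS_\Phi^\perp$, use the elliptical assumption to kill the off-diagonal block, and then read off the eigenvalue bounds from Assumption~\ref{assum-gcr} applied to each diagonal block. Your treatment is slightly more explicit in two places---you justify $\bu^TG(\alpha)\bu=\var[\bu^T(\tilde\bx-\bx)\mid V_f\le\alpha]$ via exchangeability, and you organize the off-diagonal vanishing through the tower property conditioning on $(\Phi^T\bx,\Phi^T\tilde\bx)$---whereas the paper argues the latter by a direct sign-flip $\EE[\Psi^T(\tilde\bx-\bx)(\tilde\bx-\bx)^T\Phi\mid V_f\le\alpha]=-\EE[\Psi^T(\tilde\bx-\bx)(\tilde\bx-\bx)^T\Phi\mid V_f\le\alpha]$; these are two phrasings of the same symmetry argument.
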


Proposition \ref{propphi} is proved in Supplementary materials \ref{sec.proof.gcr}.
The second part of Proposition \ref{propphi} was first proved in \cite{li2005contour} based on the assumption in \eqref{eq.gcr.li}. We further show that, under Assumption \ref{assum-gcr}, there is a eigengap between the first $d$ eigenvalues and the rest of the eigenvalues for $G(\alpha)$. %Although the ideas behind GCR are natural, the estimation error of GCR has never been analyzed due to its complexity. This paper is devoted to an error analysis of GCR.
%!TEX root = main.tex

\section{Main Results}
\label{sec.MainResults}

%We are interested in regression methods whose sample complexity depends on $d$ instead of $D$. In this section, we first introduce our idea and algorithm. Then we show some theoretical results on the error bounds of our estimation.

\subsection{Assumptions}

%We first  state some assumptions on the probability measure $\rho$, the function $g$ and noise $\xi$ for our theoretical analysis.

In order to guarantee the success of GCR, we introduce the following assumptions on $\rho$, requiring that i) $\rho$ is supported on a bounded domain; ii) $\rho$ has a spherical distribution.

\begin{assum}
\label{assumrho}
Suppose the probability measure $\rho$ satisfies the following assumptions:
\begin{enumerate}[i)]

\item {\emph{\textbf{[Boundedness]}}} $\supp(\rho) $ is bounded by $B >0$: for every $ \bx$ sampled from $\rho$, $\|\bx\| \leq B$ almost surely.

\item {\emph{\textbf{[Spherical distribution]}}} $\rho$ has a spherical distribution.
%\item {\emph{\textbf{[Covariance condition]}}} For any $\bw\in \calS_{\Phi^\perp}$ and any $\bv\in \calS_\Phi$, $\bw^\top(\bx-\EE\bx)$ is independent of $\bv^\top(\bx-\EE\bx)$.
\end{enumerate}
\end{assum}

%When computing $\hG(\alpha,r)$, one needs to estimate the variance of $y$ inside each tube $T_{ij}(r)$ to identify the directions belonging to the central subspace (like ${P-\widehat{P}}$ in Figure \ref{fig.demo}(b)). To reduce the estimation error, one needs sufficient points in each tube which is guaranteed by Assumption \ref{assumrho}(ii).
%
Assumption \ref{assumrho}(ii) is common in inverse regression, but GCR is still more robust against nonsphericity (or nonellipticity) than SIR and SCR (see Experiment 1 in Section \ref{sec.numerical}).

We next define Lipschitz and H\"older functions and make some regularity assumption on $g$.

\begin{defi}[Lipschitz functions]
A function $g$ is Lipschitz with Lipschitz constant $L_g$ if
$$
|g(\bz)-g(\widetilde{\bz})|\leq L_g \|\bz-\widetilde{\bz}\|,\ \forall \bz,\widetilde{\bz}\in \supp(g).
$$

\end{defi}

\begin{defi}[H\"older functions]
Let $s=k+\beta$ for some $k \in  \NN_0$ and $0<\beta \le1$, and $C_g >0$. A function $g:\RR^d\rightarrow\RR$ is called $(s,C_g)$-smooth if for every $\boldsymbol\alpha=(\alpha_1,...,\alpha_d), \alpha_i\in\NN_0,|\boldsymbol\alpha|\leq k$, the partial derivative $D^{\boldsymbol\alpha }g:=\frac{\partial^k g}{\partial x_1^{\alpha_1}\cdots \partial x_d^{\alpha_d}}$ exists and satisfies
\begin{align*}
&\sup_{|\boldsymbol\alpha|<k}\sup_{\bz} |D^{\boldsymbol\alpha} g(\bz)|\le C_g,\\
&\left| D^{\boldsymbol\alpha} g(\bz)- D^{\boldsymbol\alpha} g(\widetilde{\bz})\right|\leq C_g \|\bz-\widetilde{\bz}\|^{\beta},\ \forall \bz,\widetilde{\bz}\in \supp(g), |\boldsymbol\alpha|=k,
\end{align*}
where $|\boldsymbol\alpha|=\sum_{j=1}^d \alpha_j$.
\end{defi}

In this paper, the function $g$ is assumed to be $(s,C_g)$ smooth.

\begin{assum}
\label{assumg}
The function $g : \RR^d \rightarrow \RR$ is $(s,C_g)$-smooth with $s\geq1$.

\end{assum}

%\begin{remark}
%\label{rmk.g}
Assumption \ref{assumrho} and \ref{assumg} imply the followings:
\begin{enumerate}[i)]
\item $g$ is Lipschitz as $s\geq 1$, and $C_g$ is an upper bound of the Lipschitz constant of $g$.
\item $g$ is bounded by Assumption \ref{assumrho} (i) and the Lipschitz property above: $|g|\leq M$ where $M$ denotes an upper bound of $g$ which satisfies $M\leq |g(\mathbf{0})|+C_gB$.
\end{enumerate}

%\end{remark}

\begin{assum}
\label{assumxi}
The noise $\xi$ has zero mean and is bounded: $\EE \xi = 0$ and $\xi \in [-\sigma,\sigma]$ almost surely.

\end{assum}

\subsection{Central subspace error}
\label{sec.activeSubspaceError}
Our central interest is to estimate the central subspace. In this paper, we prove the central subspace estimation error by GCR based on the exact variance quantity $V_f(\tbx,\bx)$. The empirical estimation of $V_f(\tbx,\bx)$ is discussed in Section \ref{subsecalgorithm}.
If $V_f(\tbx,\bx)$ is exactly known, we define
\begin{equation}
	H(\alpha):=\EE\left[ (\tilde{\bx}-\bx)(\tilde{\bx}-\bx)^\top\mone{\left\{ V_f(\tilde{\bx},\bx)\leq \alpha\right\}}\right].
\end{equation}
such that
\begin{align}
	H(\alpha)=G(\alpha)\PP(V_f(\tbx,\bx)\leq \alpha),
	\label{eq.HGP}
\end{align}
where $\mone{\left\{ V_f(\tilde{\bx},\bx)\leq \alpha\right\}}$ is the indicator function which is equal to 1 if $V_f(\tilde{\bx},\bx)\leq \alpha$ and 0 otherwise. The matrices $H(\alpha)$ and $G(\alpha)$ have the same set of eigenvectors. According to Proposition \ref{propphi}, for any $0<\alpha<\alpha_{\rm thresh}$, the eigenvectors associated with the smallest $d$ eigenvalues of $H(\alpha)$ form an orthonormal basis of the central subspace.

The empirical counterpart of $H(\alpha)$ based on the i.i.d. samples $\{(\bx_i,y_i)\}_{i=1}^n$ is
\begin{equation}
\tH(\alpha) :=\frac{1}{\binom{n}{2}} \sum_{1\leq i< j\leq n} (\bx_i -\bx_j)(\bx_i -\bx_j)^\top\mone{\{V_f(\bx_i,\bx_j)\leq \alpha\}},
\label{eq-hK-U}
\end{equation}
and
$	\EE \tH(\alpha)=H(\alpha).
$
%The matrices $H(\alpha)$ and $G(\alpha)$ have the same set of eigenvectors.
The central subspace estimator based on $\tH$ is denoted by $\tPhi \in \RR^{D \times d}$ whose columns are the eigenvectors associated with the smallest $d$  eigenvalues of $\tH$.
This estimation error is quantified by $\|\proj_{\tPhi}-\proj_{\Phi}\|$.
 %Let $\Phi$ be a $D\times d$ matrix whose column space is spanned by the eigenvectors of $\tH(\alpha)$ corresponding to the $d$ smallest eigenvalues. The column space of $\tPhi$ is an estimation of the central subspace.
 To show that $\proj_{\tPhi}$ is close to $\proj_{\Phi}$, we first derive a concentration inequality for 
 $\tH(\alpha)$.
\begin{thm}
\label{thm.HDisVf}
  Let $\{\bx_i\}_{i=1}^n$ be i.i.d. samples from a probability measure $\rho$ satisfying Assumption \ref{assumrho}. Assume $V_f(\tbx,\bx)$ is known for any $(\tbx,\bx)$ pair. For any $\alpha\in (0,\alpha_{\rm thresh})$ and any $t >0$,
\begin{align}
\PP(\|\tH(\alpha)-H(\alpha)\|\geq t)\leq D\exp\left(-\frac{3nt^2}{32B^2(12B^2+t)}\right),
\label{eq.HDisVf}
\end{align}
where $B$ are given in Assumption \ref{assumrho}.
 \end{thm}
Theorem \ref{thm.HDisVf} is proved in Section \ref{sec.mainproof.matrixU} with a new concentration inequality for matrix-valued U-statistics. To further bound the distance between $\proj_{\tPhi}$ and $\proj_{\Phi}$, we will use the relation between $H(\alpha)$ and $G(\alpha)$ in (\ref{eq.HGP}), and the eigengap implied from Assumption \ref{assum-gcr}. %such a relation is determined by the factor $\PP(V_f(\tbx,\bx)\leq \alpha)$ which depends on $\alpha$ and $f$.
Denote $$p_{\alpha}=\PP(V_f(\tbx,\bx)\leq \alpha).$$
Since $f$ is Lipschitz and $\supp(\rho)$ is compact,   we expect that, for a fixed $\alpha>0$, $p_{\alpha}$ is bounded away from 0. The following example shows that when $\rho$ has a spherical distribution and under mild conditions, $p_{\alpha}$ is bounded away from 0.
\begin{example}\label{lem.sphere}
	Suppose Assumption \ref{assumrho} and \ref{assumg} hold, and the density function of $\rho$ on $\supp(\rho)$ is bounded below by $h_{\min}>0$. If %$\rho$ has a spherical distribution and 
	$\alpha<B^2C_g^2/D$, then
	\begin{align}
	p_{\alpha}\geq C\alpha^{d/2}h_{\min}^2
	\label{eq.PSP}
	\end{align}
	where $C=\frac{1}{C_g^d}\left(\frac{2B}{\sqrt{D}}\right)^{2(D-d)}\left(\frac{2B}{\sqrt{D}}- \frac{2\sqrt{\alpha}}{C_g}\right)^d\frac{\pi^{d/2}}{\Gamma(d/2+1)}$.
\end{example}
Example \ref{lem.sphere} is proved in Supplementary materials \ref{appen.lem.sphere}. %The lower bound in more general cases can be proved in a similar manner.
The following theorem shows that, under the same conditions as in Theorem \ref{thm.HDisVf}, $\proj_{\tPhi}$ is close to $\proj_{\Phi}$ with high probability (see a proof in Section \ref{sec.mainproof.HDisVf}).
\begin{thm}
	\label{thm.projDisVf}
	Let $\{\bx_i\}_{i=1}^{n}$ be i.i.d. samples from a probability measure $\rho$.
	% and $\{y_i\}_{i=1}^{n}$ follows the model in \eqref{eqyi}. 
	Suppose Assumption \ref{assum-gcr} and \ref{assumrho} hold and $V_f(\tbx,\bx)$ is exactly known for any $(\tbx,\bx)$ pair. For any $0<\alpha<\alpha_{\rm thresh}$ and $t\in (0,2)$, we have
	\begin{align}
		\PP\left\{
		\|\proj_{\tPhi} -\proj_{\Phi}\| \ge t
		\right\}
		\le
		D\exp\left(-\frac{p_{\alpha}^2c_0^2nt^2}{32B^2(36B^2+p_{\alpha}c_0t)}\right)
		\label{eq.projDisVf}
	\end{align}
	where $c_0,B$ are given in Assumption \ref{assum-gcr} and \ref{assumrho}.
	%\textcolor{red}{what is $\tilde \alpha$?}
\end{thm}

Integrating the probability in (\ref{eq.projDisVf}) gives rise to the following mean squared estimation error for the central subspace:
\begin{coro}
\label{coroproj}
  Under the conditions in Theorem \ref{thm.projDisVf}, we have
  \begin{align}
    \EE\|\proj_{\tPhi} -\proj_{\Phi}\|^2\leq \frac{C_4}{n},
  \end{align}
  where\begin{equation}
  	\begin{aligned}
  		&C_1=p_{\alpha}^2c_0^2,\  C_2=1152B^4,\  C_3=32B^2p_{\alpha}c_0,\ C_4=\frac{C_2}{C_1}(\log D+D+1)+\frac{8DC_3^2}{C_1^2}.
  	\end{aligned}
  	\label{eq.C1-4}
  \end{equation}
\end{coro}
Corollary \ref{coroproj} is proved in Section \ref{sec.mainproof.G3}. It implies that, if $V_f(\tbx,\bx)$ is exactly known, the mean squared estimation error of GCR for the central subspace converges in the rate of $O(n^{-1})$.

\subsection{Regression error}
\label{sec.hf}
\label{sec.regressionscheme}

\begin{algorithm}[t]
	\SetKwInOut{KwIni}{Initialization}
    \KwIn{$\{(\bx_i,y_i)\}_{i=1}^{2n}$, $V_f$, parameters $\alpha,s$ and dimension $d$.}
	\textbf{Step 1}: Split data to two subsets $\mathcal{S}_1=\{(\bx_i,y_i)\}_{i=1}^{n}$ and $\mathcal{S}_2=\{(\bx_i,y_i)\}_{i=n+1}^{2n}$.\\
    \textbf{Step 2 (GCR)}: Compute the matrix $\tH(\alpha)$ using $\mathcal{S}_1$.\\
    \hspace{1cm} $\tPhi \leftarrow$ eigenvectors associated with the smallest $d$ eigenvalues of $\tH(\alpha)$.\\
    \textbf{Step 3 (Regression)}: Compute $\tg$ as the piecewise polynomial of degree $k$ to fit the data $\{(\tPhi^\top\bx_i,y_i)\}_{i=n+1}^{2n}$, according to (\ref{eq.gh0}), where $k=\lceil s \rceil-1$. \\
	\KwOut{Recovered subspace $\tPhi$ and function $\tg$ such that $\tf(\bx)=\tg(\tPhi^\top\bx)$.}
	\caption{Regression of $f$ based on exact $V_f$}
	\label{alg2}
\end{algorithm}

Given $2n$ samples denoted by $\calS = \{(\bx_i,y_i)\}_{i=1}^{2n}$, we evenly split the data into two subsets $\calS_1 = \{(\bx_i,y_i)\}_{i=1}^{n}$ and $\calS_2 = \{(\bx_i,y_i)\}_{i=n+1}^{2n}$ while $\calS_1$ is used to compute $\tH(\alpha)$ as described in Section \ref{sec.activeSubspaceError}, and $\calS_2$ is used to estimate the function $g$.
After the central subspace is estimated as $\tPhi \in \RR^{D \times d}$, the next step is to estimate the function $g$ from the data $\{(\bz_i,y_i)\}_{i=n+1}^{2n}$, where $\bz_i= \tPhi^\top \bx_i \in \RR^d$. Our regression scheme is summarized in Algorithm \ref{alg2}. A rich class of nonparametric regression techniques \cite{gyorfi2006distribution,tsybakov2009nonparametric,wasserman2006all} can be used to estimate $g$, such as kNN, kernel regression, polynomial partitioning estimates, etc.
In this paper, we will present the results of polynomial partitioning estimates.

The data $\{(\bz_i,y_i)\}_{i=n+1}^{2n}$ satisfies the following model
\begin{equation}
\label{eqmodelg}
y_i = g(\bz_i ) + \widetilde{\xi}_i, \ i =n+1,\ldots,2n
\end{equation}
with noise $ \widetilde{\xi}_i$ given by
\begin{align}
  \widetilde{\xi}_i=y_i-g(\tPhi^\top\bx_{i} )=g(\Phi^\top\bx_{i})-g(\tPhi^\top\bx_{i})+\xi_{i}.
  \label{eq.txi}
\end{align}
%The noise satisfies $|\widetilde{\xi}_i| \le C_g B \|{\hPhi} -\Phi\| + \sigma$ under Assumption \ref{assumrho}(i) and \ref{assumg}.
Due to the mismatch between $\tPhi$ and $\Phi$, the noise $\widetilde\xi_i$ has a bias and its conditional mean at $\bz = \tPhi^\top \bx$ is
\begin{equation}
\label{eqeta}
\eta(\bz) := \EE[\widetilde\xi | \tPhi^\top \bx =\bz] = \EE[g(\Phi^\top\bx)-g(\tPhi^\top\bx) | \tPhi^\top \bx =\bz],
\end{equation}
where the expectation {is taken over $\bx \sim \rho$} and $\xi$, conditioning on $ \tPhi^\top \bx=\bz$. This function is bounded such that $\|\eta\|_\infty \le C_g B \|\tPhi-\Phi\|$. We use
\begin{align}
  b^2_{\mathrm{bias}}:=C_g^2 B^2 \|\tPhi-\Phi\|^2
  \label{eq.bnoise}
\end{align}
to denote an upper bound of the squared bias in noise.
%Let
%\begin{equation}
%  \widetilde{\sigma}^2:= C_g^2B^2\|{\hPhi}-{\Phi}\|^2+\sigma^2
%  \label{eq.tildSigmaBound}
%\end{equation}
%which will be used to measure the noise variance.

The support of the measure $\rho$ is bounded by Assumption \ref{assumrho}(i), which implies $\bz_i \in [-B,B]^d $. For a fixed positive integer $K$, let $\mathcal{F}_k$ be the space of piecewise polynomials of degree no more than $k$ on the partition of $[-B,B]^d$ into $K^d$ cubes with side length $2B/K$. If $g$ is $(s,C_g)$ smooth, the polynomial degree $k$ should be chosen as $k=\lceil s \rceil-1$. Consider the piecewise polynomial estimator of order $k$:
\begin{align}
\label{eqtildeg}
\bar{g}(\bz)=\argmin_{h \in \mathcal{F}_k} \frac{1}{n}\sum_{i=n+1}^{2n} |h(\bz_i)-y_i|^2.
\end{align}
Since $g$ is bounded by $M$, we truncate the final estimator to $\widetilde{g}$ such that
\begin{equation}
\widetilde{g}(\bz)=T_M \bar g(\bz)=\begin{cases}
  \bar g(\bz), & \mbox{ if } |\bar g(\bz)|\le M,\\
  M\cdot \mbox{sign}(\bar g(\bz)), &\mbox{ otherwise.}
\end{cases}
\label{eq.gh0}
\end{equation}
The parameter $K$ determines the size of the partition, which we set as
\begin{equation}
  K=\Big\lceil \frac{n}{\max(\sigma^2  +2C_4n^{-1},2M^2+ 4C_4n^{-1})\log n}\Big\rceil^{\frac{1}{2s+d}}
  \label{eq.K}
\end{equation}
with $C_4$ defined in (\ref{eq.C1-4}).
%\begin{equation}
%\begin{aligned}
%&C_5=3\left[\PP(V_f(\tbx,\bx)\leq C_{4})c_0\right]^2,\  C_6=2304B^4,\  C_7=32B^2\PP(V_f(\tbx,\bx)\leq C_{4})c_0,\\ &C_8=\frac{C_6}{C_5}(\log D+D+1)+16+\frac{8DC_7^2}{C_5^2}, \\
%&C_9=dC_8C_g^2 B^2.
%\end{aligned}
%\label{eq.C4-7}
%\end{equation}

The goal of this paper is to give an error analysis of $\widetilde{f}$, which has the Mean Squared Error (MSE)
$$
\EE\|\tf(\bx)-f(\bx) \|_{L^2(\rho)}^2=\underset{\{(\bx_i,y_i)\}_{i=1}^{2n}}{\EE} \int |\tf(\bx)-f(\bx) |^2 \rho(d\bx),
$$
where the expectation is taken over the joint distribution of $\{(\bx_i,y_i)\}_{i=1}^{2n}$.
For any $\bx \in \RR^D$,
\begin{eqnarray}
  |\tf(\bx)-f(\bx) |^2 &=& | \tg(\tPhi^\top\bx)- g(\Phi^\top\bx)|^2\nonumber\\
  &\leq& 2|g(\tPhi^\top \bx)- g(\Phi^\top\bx)|^2\nonumber+  2|\tg(\tPhi^\top\bx)-g(\tPhi^\top \bx)|^2\\
 % &\leq& {\rm Error}_1+C_g\|\tPhi^\top \bx-\Phi^\top \bx\| \\
  &\leq& 2C_g^2 \|{\tPhi} -{\Phi} \|^2\|\bx\|^2 + 2|\tg(\tPhi^\top\bx)-g(\tPhi^\top \bx)|^2.
  \label{eq.squarf}
\end{eqnarray}
The first term captures the estimation error of the central subspace by GCR, and the second terms captures the regression error of $g$.
%After the central subspace is estimated as $\tPhi$, the function $g$ can be estimated according to \eqref{eqtildeg} and (\ref{eq.gh0}).
The following theorem provides an upper bound for the MSE of $\tf$ (see its proof in Section \ref{sec.mainproof.f}).
%of $\EE\|\tf(\bx)-f(\bx)\|_{L^2(\rho)}^2$ (see its proof in Section \ref{sec.mainproof.f}).

\begin{thm}
\label{thm.fError}
Let $\{\bx_i\}_{i=1}^{2n}$ be i.i.d. samples of a probability measure $\rho$ and $\{y_i\}_{i=1}^{2n}$ be sampled according to the model in \eqref{eqyi}. Suppose Assumption \ref{assum-gcr}-\ref{assumxi} hold and $V_f(\tbx,\bx)$ is exactly known for any $(\tbx,\bx)$ pair. Set $\alpha\in (0,\alpha_{\rm thresh})$ and $K$ according to (\ref{eq.K}). The estimator $\tf$ in (\ref{eq.gh0}) satisfies %If $n$ is sufficiently large and $\sigma$ is sufficiently small such that (\ref{eq.alphathresh}) is satisfied,
  \begin{align}
    &\EE\|\tf(\bx)-f(\bx)\|_{L^2(\rho)}^2 \leq \frac{28C_5}{n} + 2C\left(\frac{\max(\sigma^2  + 2M^2+ 6C_5n^{-1})\log n}{n}\right)^{\frac{2s}{2s+d}}.
    \label{thmfeq}
  \end{align}
  where $C_5=C_4C_g^2 B^2$, $C_4$ is defined in (\ref{eq.C1-4}) and $C>0$ is a constant depending on $d,k,s,C_g,B,M$.
\end{thm}

Theorem \ref{thm.fError} demonstrates that if GCR is used to estimate the central subspace, the mean squared regression error decays exponentially in $n$ with an exponent depending on $d$, instead of $D$.
GCR effectively exploits the low-dimensional structure of the function and gives a fast rate of convergence in comparison with a direct regression in $\RR^D$.

\subsection{A practical algorithm to estimate the central subspace}
%\subsection{Empirical estimation of $V_f(\tbx,\bx)$}
\label{subsecalgorithm}

Our estimation theory in Theorem \ref{thm.projDisVf} and \ref{thm.fError}   utilizes  U-statistics with the exact knowledge of $V_f(\tbx,\bx)$ for any $(\tbx,\bx)$ pair. In practice, $V_f(\tbx,\bx)$ is not given and we need to estimate it from the samples. In this paper, we use the empirical estimation of $V_f$ proposed in \cite{li2005contour} and prove an estimation error for $V_f$.
We also propose an efficient algorithm to compute the empirical covariance matrix for the estimation of the central subspace.
%Our theory in Section \ref{sec.activeSubspaceError} and \ref{sec.regressionscheme} requires the exact knowledge of $V_f(\tbx,\bx)$. In practice, $V_f(\tbx,\bx)$ is not given and we need to estimate it from the samples. In this section, we study the empirical estimation of $V_f(\tbx,\bx)$ proposed by \cite{li2005contour} and provide a high probability bound on the estimation error.
%\subsection{An efficient algorithm to estimate $\proj_\Phi$}

%The procedure estimating $\proj_\Phi$ discussed in Section \ref{sec.activeSubspaceError} utilizes the $U$-statistics. Such a procedure has several downsides: First, the computational complexity of $\tH(\alpha)$ is always $O(n^2)$, which is very expensive. Second, this procedure is very inefficient, since among the $O(n^2)$ summands of $\tH(\alpha)$, there are only few non-zero matrices. Third, this procedure requires $V_f(\tbx,\bx)$ to be know, which is a very strong assumption. In this subsection, we propose a more practical algorithm to estimate $\proj_\Phi$.

\subsubsection{Empirical estimation of $V_f$}

The variance quantity $V_f(\bx_i,\bx_j)$ is the variance of $f$ along the segment $\ell(\bx_i,\bx_j)$. Since $\ell(\bx_i,\bx_j)$ has $0$ measure, it is unlikely to have data exactly lying on $\ell(\bx_i,\bx_j)$. Following the idea in \cite{li2005contour}, we approximate $V_f(\bx_i,\bx_j)$ by the variance of $y$ in a narrow tube enclosing $\ell(\bx_i,\bx_j)$ with radius $r$ (see Figure \ref{fig.tube}). %denoted by $V_y(\bx_i,\bx_j,r)$. %denoted by $T_{ij}(r)$.
Let $d(\bx,\ell(\bx_i,\bx_j))$ be the distance from $\bx$ to $\ell(\bx_i,\bx_j)$:
$d(\bx,\ell(\bx_i,\bx_j)) = \inf_{\bz \in \ell(\bx_i,\bx_j)} \|\bx-\bz\|. $
We define $T_{ij}(r)$ as the tube enclosing $\ell(\bx_i,\bx_j)$ with radius $r$ as follows
$$
T_{ij}(r):=\left\{\bx|d(\bx,\ell(\bx_i,\bx_j))\leq r, (\bx-\bx_i)\cdot(\bx_j-\bx_i)\geq0, (\bx-\bx_j)\cdot(\bx_i-\bx_j)\geq0\right\}.
$$
%\begin{figure}[t!]
%  \includegraphics[width=0.4\textwidth]{figure/tube.pdf}
%  \label{fig.tube}
%  \caption{Graph of $T_{ij}(r)$.}
%\end{figure}
The variance of $y$ in $T_{ij}(r)$ is denoted as
\begin{equation}
	\label{eqvy}
	V_y(\bx_i,\bx_j,r) =\var\left(y|\bx \in T_{ij}(r)\right).\end{equation}
The empirical counterpart of $V_y(\bx_i,\bx_j,r)$ based on the data $\{(\bx_i,y_i)\}_{i=1}^n$ is
\begin{equation}
	\label{eqhatvy}
	\hV_y(\bx_i,\bx_j,r) =\frac{1}{\hn_{ij}(r)-1}\sum_{\bx_k \in T_{ij}(r)} (y_k-\bar{y}_{ij})^2, \text{ where } \bar{y}_{ij} = \frac{1}{\hn_{ij}(r)}\sum_{\bx_k \in T_{ij}(r)} y_k,
\end{equation}
\begin{wrapfigure}{r}{0.4\textwidth}
	\centering
	\includegraphics[width=\textwidth]{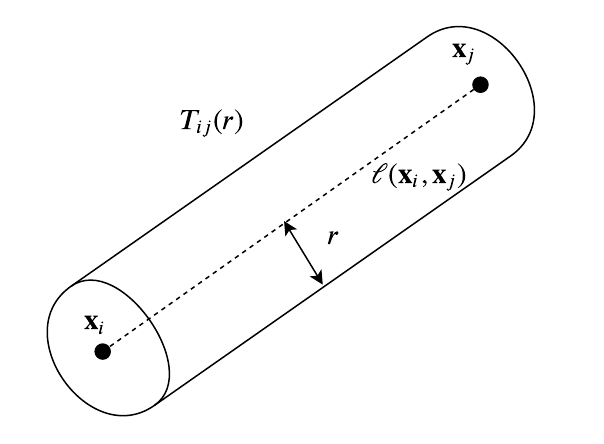}
	\caption{\label{fig.tube} $T_{ij}(r):$ the tube enclosing $\ell(\bx_i,\bx_j)$ with radius $r$.}
\end{wrapfigure}
and $\hn_{ij}(r)  = \#\{\bx_k \in T_{ij}(r)\}$. Here $\#S$ denotes the cardinality of the set $S$.

We make the following assumption on the measure of tube:
\begin{assum}
	\label{assumtube}
	Assume there exists $c_1  >0$ such that, for any $\bx_i , \bx_j \in \supp(\rho)$ and any $r \in (0,\|\bx_i-\bx_j\|/4)$,
		\begin{equation}\rho(T_{ij}(r)) \ge c_1 r^{D-1} \|\bx_i-\bx_j\|.
			\label{eqmeasuretube}
		\end{equation}
\end{assum}
Assumption \ref{assumtube} guarantees a sufficient amount of points in every tube so that $V_y(\bx_i,\bx_j,r)$ in \eqref{eqvy} can be well approximated by its empirical quantity $\hV_y(\bx_i,\bx_j,r)$ in \eqref{eqhatvy}. Assumption \ref{assumtube} is satisfied if $\rho$ has a density bounded below on $\supp(\rho)$.

Let $V_f(\bx_i,\bx_j,r)$ be the variance of $f$ in the tube $T_{ij}(r)$:
\begin{align}
	V_f(\bx_i,\bx_j,r)=\var\left(f(\bx)|\bx \in T_{ij}(r)\right).
\end{align}
We further assume that the difference between $V_f(\bx_i,\bx_j,r)$ and $V_f(\bx_i,\bx_j)$ is linear in $r$:
\begin{assum}
	\label{assumVr}
	There exists $c_2  >0$ such that, for any $\bx_i , \bx_j \in \supp(\rho)$,
	\begin{equation}
		|V_f(\bx_i ,\bx_j)-V_f(\bx_i ,\bx_j,r)|\le c_2 r.
		\label{eq.Vr}
	\end{equation}
\end{assum}
Assumption \ref{assumVr} implies that when $r$ is small, $V_f(\bx_i,\bx_j,r)$ is close to $V_f(\bx_i,\bx_j)$ for any $\bx_i , \bx_j \in \supp(\rho)$. The following example shows that when $\rho$ is the volume measure on its support and $T_{ij}(r)\subset \supp(\rho)$, then Assumption \ref{assumVr} holds.
\begin{example}
	\label{lemmavfgamma}
	Let $f$ be defined as \eqref{eqyi}.
	Suppose Assumption \ref{assumrho}(i) and \ref{assumg} hold, and $\rho$ is the volume measure on a compact set in $\RR^D$. For every $\bx_i,\bx_j \in \supp(\rho)$ such that  $T_{ij}(r)\subset \supp(\rho)$, and any $r\leq \sqrt{5}B/2$, we have
	\begin{equation}
		\label{eqvfgamma}
		|V_f(\bx_i ,\bx_j)-V_f(\bx_i ,\bx_j,r)|\le 10  C_g^2 B r.
	\end{equation}
\end{example}
Example \ref{lemmavfgamma} is proved in Supplementary materials \ref{appen.lem.lemmavfgamma}.

\subsection{An efficient algorithm to form the empirical covariance matrix}

Forming the empirical covariance matrix as in \eqref{eq-hK-U} requires $\hV_y(\bx_i,\bx_j,r)$ for any $(\bx_i,\bx_j)$ pair. With $n$ samples, the computational complexity is $n \choose 2$, which is expensive. When the parameter $\alpha$ is small, we expect to have a  small number of $(\bx_i,\bx_j)$ pairs  connected such that  $\hV_y(\bx_i,\bx_j,r) \le \alpha$.
We next propose a new algorithm to efficiently identify the connected pairs.

%For the estimation of $\Phi$, we propose a new algorithm which is based on the empirical estimation $\hV_y$ in (\ref{eqhatvy}).
In our new algorithm, we form the empirical version of $G(\alpha)$ based on $n$ i.i.d. samples $\{(\bx_i,y_i)\}_{i=1}^n$. Let $\calA_{\alpha}\in\{0,1\}^{n\times n}$ be a matrix with entries in $\{0,1\}$ such that all $(i,j)$ indices with $\calA_{\alpha}(i,j)=1$ satisfy $\hV_y(\bx_i,\bx_j,r)\leq \alpha$. Our estimation of $G(\alpha)$ is
\begin{equation}
	\hG(\alpha,r) =\frac{1}{\hn_{\alpha}} \sum_{(i,j):\ \calA_{\alpha}(\bx_i,\bx_j) =1} (\bx_i -\bx_j)(\bx_i -\bx_j)^\top,
	\label{eq-hG}
\end{equation}
where $\hn_{\alpha} =\# \{ (i,j): \calA_{\alpha}(\bx_i,\bx_j) = 1\}$. In other words, the connected pairs are indicated by the nonzero entries in the matrix $\calA_{\alpha}$. In this paper, we form the matrix $\calA_{\alpha}$ via Algorithm \ref{alg1}. In Algorithm \ref{alg1}, each data point is used at most once: when a point is connected in one pair, we remove it from the rest of the computation. Algorithm \ref{alg1} is more efficient than the original GCR in the sense that it outputs at most $n/2$ connected pairs while the original GCR uses $n \choose 2$ pairs (most of which are not connected) to estimate $G(\alpha)$.  Therefore the computational cost is greatly reduced. Moreover, our numerical experiments show that the mean squared error of the central subspace estimation by Algorithm \ref{alg1} converges in the order of $n^{-1}$, which is the same as that in Corollary \ref{coroproj}.

In $\hG(\alpha,r)$, $\alpha$ and $r$ are two important parameters, which need to be properly chosen. In this paper, we choose these parameters as follows:
%Let $\nu >0$ be a fixed constant, and denote
%\begin{equation}
%	\alpha_0 = \max\left \{ C_7 \left( \frac{\log n}{n}\right)^{\frac 1 D}, C_8   \left( \frac{\log n}{n}\right)^{\frac 1 {D+2}} \right\}
%	\label{eqalpha00}
%\end{equation}
%with $C_6=\left(4C_g^2\max(5B,2)\right)^{-1}$, $C_7 =  \left(\frac{56\nu C_g^2}{3c_1C_6^{D-1}}\right)^{\frac 1 D}  ,C_8 = \left(\frac{256\nu(M+\sigma)^4C_g^2}{c_1C_6^{D-1}} \right)^{\frac{1}{D+2}}$, where the parameters $C_g,B,c_1,\sigma,M$ are defined in Assumption \ref{assumrho}-\ref{assumtube}.
Let $\nu >0$ be a fixed constant. We set
\begin{align}
	&\alpha_0 = \max\left \{ C_7 \left( \frac{\log n}{n}\right)^{\frac 1 D}, C_8   \left( \frac{\log n}{n}\right)^{\frac 1 {D+2}} \right\}, \quad r=C_6\alpha_0 \label{eqalpha00}\\
	&\alpha =  4d C_g^2 B^2 \left(\frac{\log n}{n}\right)^{\frac 1 D}+\alpha_0+3\sigma^2
	\label{eqalpha}
\end{align}
with $C_6=\left(\max(2c_2,8C_g^2)\right)^{-1}$, $C_7 =  \left(\frac{56\nu C_g^2}{3c_1C_6^{D-1}}\right)^{\frac 1 D}  ,C_8 = \left(\frac{256\nu(M+\sigma)^4C_g^2}{c_1C_6^{D-1}} \right)^{\frac{1}{D+2}}$, where the parameters $C_g,B,c_1,c_2,\sigma,M$ are defined in Assumption \ref{assumrho}-\ref{assumVr}.
%And $\alpha$ is chosen as
%\begin{equation}
%	\alpha =  4d C_g^2 B^2 \left(\frac{\log n}{n}\right)^{\frac 1 D}+\alpha_0+3\sigma^2.
%	\label{eqalpha}
%\end{equation}

In the following theorem, we prove that, if the parameters $\alpha,r$ are chosen according to \eqref{eqalpha00} and \eqref{eqalpha}, Algorithm \ref{alg1} guarantees at least $n/4$ connected pairs of $(\bx_{i},\bx_{j})$ such that $\hV_y(\bx_{i},\bx_{j},r)\le \alpha$, and all the connected pairs satisfy $V_f(\bx_{i},\bx_{j})\leq \alpha+ \alpha_0+3\sigma^2$ with high probability.

%The following theorem estimates the number of pairs of $(\bx_i,\bx_j)$ used by Algorithm \ref{alg1}, and gives a high probability bound on $V_f(\bx_i,\bx_j)$ for all pairs.
%shows that with properly chosen $\alpha$ and $r$, with high probability, Algorithm \ref{alg1} computes $\hG(\alpha,r)$ with at least $n/4$ pairs such that each pair $(\bx_i,\bx_j)$ satisfies $V_f(\bx_i,\bx_j)\leq \alpha+\alpha_0+3\sigma^2$.

\begin{thm}
	\label{prop.alg1}
	Let $\{\bx_i\}_{i=1}^n$ be i.i.d. samples from the probability measure $\rho$, and $\{y_i\}_{i=1}^n$ be sampled according to the model in \eqref{eqyi}, under Assumption \ref{assumrho}(i), \ref{assumg}-\ref{assumVr}.
	Let $\nu >2$ and set $\alpha_0,r,\alpha$ according to (\ref{eqalpha00}) and (\ref{eqalpha}). %with $C_1=\left(4C_g^2\max(5B,2)\right)^{-1}$.
	Index the output pairs $\{(\bx_i,\bx_j)| \calA(i,j) = 1\}$ by Algorithm \ref{alg1} as $\{(\bx_{i_k},\bx_{j_k})\}_{k=1}^{\hn_{\alpha}}$. If {$n$ is sufficiently large such that $2 (n/\log n)^{\frac{d}{2D}} \le n$ and $\alpha_0<2C_g^2$}, running Algorithm \ref{alg1} gives rise to
	\begin{align}
%		&\PP\left(\bigcap_{k=1}^{\hn_{\alpha}}\left[V_f(\bx_{i_k},\bx_{j_k})\leq \alpha+ \alpha_0+3\sigma^2\right]\right)\geq 1-4\hn_{\alpha}n^{-\nu},  \label{eq.hV} \\
		&\PP\left(\bigcap_{k=1}^{\hn_{\alpha}}\left[V_f(\bx_{i_k},\bx_{j_k})\leq \alpha+ \alpha_0+3\sigma^2\right]\right)\geq 1-2n^{-(\nu-1)},  \label{eq.hV} \\
		& \mbox{with }\quad \hn_{\alpha}\leq n/2,\ \mbox{ and }\ \PP(\hn_{\alpha}\geq n/4)\geq 1-2n^{-(\nu-2)} \label{eq.hn}.	
	\end{align}
%Moreover, each datum is used at most in one pair.
\end{thm}

Theorem \ref{prop.alg1} is proved in Supplementary materials \ref{appen.prop.alg2}. Theorem \ref{prop.alg1} shows that if $\alpha$ and $r$ are properly chosen, with high probability, Algorithm \ref{alg1} gives rise to $\hG(\alpha,r)$ with at least $n/4$ pairs of $(\bx_i,\bx_j)$. Moreover, all such pairs satisfy
$V_f(\bx_{i},\bx_{j})\leq \alpha+ \alpha_0+3\sigma^2$ with high probability.
If $n$ is large enough and $\sigma$ is small enough such that $\alpha+ \alpha_0+3\sigma^2<\alpha_{\rm thresh}$, we expect the estimated subspace $\hPhi$ by Algorithm \ref{alg1}  to be a good approximation of the central subspace $\Phi$.

Based on the estimated subspace $\hPhi$ by Algorithm \ref{alg1}, we perform regression to obtain $\hg$ as described in Section \ref{sec.hf} using the samples $\{(\hPhi^\top\bx_i,y_i)\}_{i=n+1}^{2n}$. Then $f$ is estimated as
$
	\hf(\bx)=\hg(\hPhi^\top\bx) .
$

\subsection{Data normalization}

Our theoretical analysis assumes that $\EE \bx = \mathbf{0}$, $\EE \bx\bx^\top = I$, and $\bx$ follows a spherical distribution. In practice these conditions may not be satisfied for the given data, and we always preprocess the data by normalization \cite{li2007directional}. Given the data set $\{\bx_i,y_i\}_{i=1}^{n}$, we first compute the empirical mean $\bar \bx =\frac{1}{n} \sum_{i=1}^{n} \bx_i $ and the empirical covariance matrix $\widehat{\Sigma} = \frac{1}{n-1} \sum_{i=1}^n (\bx_i -\bar\bx)(\bx_i -\bar\bx)^\top$, and then we normalize the data as
$$
\widetilde{\bx}_i=\widehat{\Sigma}^{-\frac 1 2}(\bx_i-{\bar{\bx}}).
$$
%which is a realization of $\bx=\Sigma^{-1/2}(\bu-\bar{\bu})$ with $\bar{\bu},\Sigma$ being the exact mean and covariance matrix of $\bu$.
This normalization does not alter the low-dimensional property of the function since
\begin{align*}
f(\bx)=g(\Phi^{\top}\bx)=g\left(({\widehat\Sigma}^{\frac 1 2}\Phi)^{\top}{\widehat\Sigma}^{-\frac 1 2}(\bx-\bar{\bx})+\Phi^\top\bar{\bx}\right) =\widetilde{g}\left(\widetilde{\Phi}^{\top}{\widetilde\bx}\right)
\end{align*}
where $\widetilde{\Phi}={\widehat\Sigma}^{\frac 1 2}\Phi$ and $\widetilde{g}(\bv)=g(\bv+\Phi^\top\bar{\bx})$ .

\begin{algorithm}[t]
	\KwIn{$\mathcal{S}=\{(\bx_i,y_i)\}_{i=1}^n$ and $\alpha>0$, $r>0$.}
	\textbf{Initialization:} $\hG=0$, $\hn_{\alpha}=0$ and $\calA=\mathbf{0}_{n \times n}.$\\
	\While {$\#\mathcal{S}>1$}{Index data in $\mathcal{S}$ as $\{(\bx_{i_k},y_{i_k})\}, k=1,...,\#\mathcal{S}$.\\
		\For {$k=2$ to $\#\mathcal{S}$} {
			\If{$\hV_y(\bx_{i_1},\bx_{i_k},r)\le \alpha$}{$\hG=\hG+(\bx_{i_1}-\bx_{i_k})(\bx_{i_1}-\bx_{i_k})^\top$,\\
				$\calA(i_1,i_k)=1 $, \\ %and $\calA(i_k,i_1)=1$,\\
				$\hn_{\alpha}=\hn_{\alpha}+1$,\\
				$\mathcal{S}=\mathcal{S}\backslash\{\bx_{i_1},\bx_{i_k}\}$\\
				\textbf{break}}
		}
		$\mathcal{S}=\mathcal{S}\backslash\{\bx_{i_1}\}$
	}
	$\hG(\alpha,r)=\hG/\hn_{\alpha}$. \\
	\KwOut{$\hPhi$: eigenvectors associated with the smallest $d$ eigenvalues of $\hG(\alpha,r)$.}
	\caption{A practical algorithm for the central subspace estimation.}
	\label{alg1}
\end{algorithm}

\section{Numerical experiments}
\label{sec.numerical}
In this section, we provide numerical experiments to demonstrate the performance of the modified GCR in Algorithm \ref{alg1} and the regression scheme. The data $\{(\bx_i,y_i)\}_{i=1}^n$ are sampled according to the model in (\ref{eqyi}) with $\xi_i \sim N(0,\sigma^2)$. The noise is $p\%$ if $\sigma={p\%}\sqrt{1/ n\sum_{i=1}^n f^2(\bx_i)}$.
In all experiments, $90\%$ of the given data is used for training and $10\%$ is used for testing. Training data are used for central subspace estimation by GCR, SCR or SIR respectively and regression of $g$ is estimated by Gaussian kernel regression through the MATLAB built-in function \emph{fitrkernel}. The test data are used to compute the central subspace estimation error $\|\proj_{\widehat{\Phi}}-\proj_{\Phi}\|$ and the regression error 
$$\text{regression error} = \sqrt{\frac{1}{n_{\rm test}}\sum_{(\bx_i,y_i)\ \in\ {\rm test\  set}} (\hf(\bx_i)-y_i)^2},$$
where $n_{\rm test}$ is the number of the test data. In GCR, the parameter $r$ is chosen in the order of $n^{-1/D}$ according to \eqref{eqalpha00}. We set $r=2n^{-1/D}$ without specification.

We expect $\EE\|\proj_{\widehat{\Phi}}-\proj_{\Phi}\|\sim n^{-1/2}$, so $\log_{10}\|\proj_{\widehat{\Phi}}-\proj_{\Phi}\|$ scales linearly with respect to $\log_{10}n$, with a slope of $-0.5$, independently of $d$ and $D$.
%we use our proposed method to approximate high dimensional function with an intrinsic low-dimensional central subspace. In all of our tests, we use the domain $[-1,1]^D$ with $D=10$. We first use Algorithm \ref{alg1} to estimated the orthonormal basis of the central subspace. The subspace error is measured by $\|\proj_{\Phi}-\proj_{\widehat{\Phi}}\|$ which equals to the largest singular value of $\proj_{\Phi}-\proj_{\widehat{\Phi}}$. Then we use MATLAB built-in function \emph{fitrkernel} to approximate the function along the estimated subspace. When applying kernel regression, $90\%$ of the given data is used for training the kernel and $10\%$ is used for testing. In all of our experiment, we use $r=2n^{-1/D}$. The set of $n$ is $10^3,10^{3.3}, 10^{3.6}, 10^{3.9}, 10^{4.2},10^{4.5}$. We use $\be_i$ to denote the $10\times 1$ element column vector with only non-zero element 1 at the $i^{th}$ row. We test our algorithm on clean data and noisy data. To generate noisy data based on clean data $\{\bx_i,f(\bx_i)\},i=1,2,...,n$, we add $s\%$ noise by sampling $\varepsilon_i$ from standard normal distribution with standard deviation
%$$
%\sqrt{\frac{\sum_{i=1}^n f(\bx_i)^2}{n}}\cdot\frac{s}{100}.
%$$
%In this section, without ambiguity, we use GCR to refer to  Algorithm \ref{alg1}.

\subsection{Experiment 1: Robustness to non-elliptical distributions}

In the first experiment, we investigate the sensitivity of GCR, SCR and SIR to the condition of elliptical distributions in Assumption \ref{assumrho}(ii),. Let $f(\bx)=x_1^2$ where $\bx=[x_1,x_2]^T\in \RR^2$. This function can be expressed as the model in \eqref{eqyi} with $\Phi=[1,0]^T$ and $g(z)=z^2$ where $z=\Phi^T\bx$. We sample $\bx$ uniformly in the domain $[-0.5,0.5]\times[-0.5,0.5]$ excluding the forth quarter, which violates the condition of elliptical distributions. We set $r=0.01, \alpha=0.001$ in GCR, $\alpha=0.01$ in SCR and $10$ slices in SIR. Figure \ref{fig.ellip_compair} shows 1500 samples of $\bx$ (black dots), the direction of $\Phi$ (red arrow) and the direction of $\widehat{\Phi}$ (blue arrow). We observe that GCR is more robust than SCR and SIR when $\bx$ is not elliptically distributed.

%Our domain is the unit square excluding a quarter. We generate 1500 samples uniformly distributed on this domain. Then the marginal distribution of $\bx$ is not elliptical on either $\Phi$ or $\Phi^{\perp}$. We set $r=0.01,\alpha=0.001$ for GCR, $\alpha=0.01$ for SCR and 10 slices for SIR. The distribution of samples (black dots) and recovered $\Phi$ (blue arrow) are shown in Figure \ref{fig.ellip_compair}. The exact $\Phi$ is drawn as red arrow. SIR and SCR are sensitive to elliptical distribution of samples while GCR turns out to be more robust.
\begin{figure}[t!]
  \centering
  \subfloat[GCR]{\includegraphics[width=0.3\textwidth]{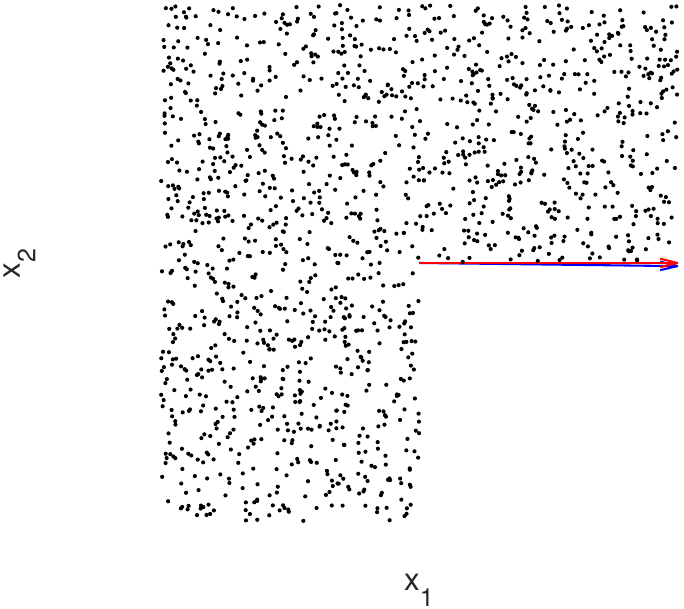}}\hspace{0.2cm}
  \subfloat[SCR]{\includegraphics[width=0.3\textwidth]{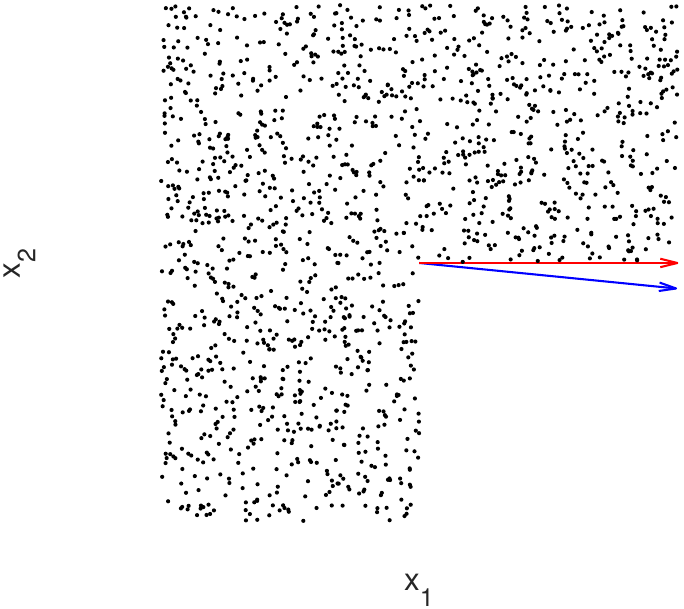}}\hspace{0.2cm}
  \subfloat[SIR]{\includegraphics[width=0.3\textwidth]{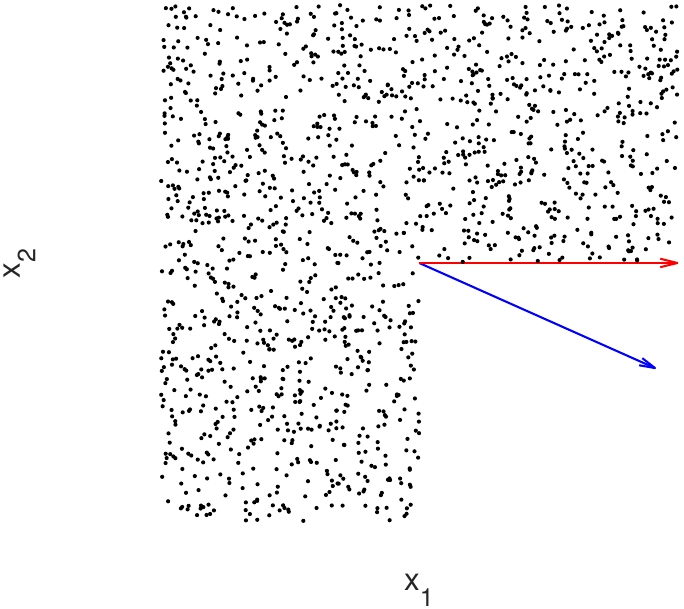}}
  \caption{(Experiment 1) central subspace estimation by GCR, SCR and SIR when $\bx$ is not elliptically distributed. Samples are displayed in black dots and the direction of $\Phi$ is represented by a red arrow. The direction of $\widehat{\Phi}$ is shown in a blue arrow in (a) by GCR, (b) by SCR and (c) by SIR. We observe that GCR is more robust than SCR and SIR when $\bx$ is not elliptically distributed.
  }\label{fig.ellip_compair}
\end{figure}

\subsection{Experiment 2: Monotonic functions}
\begin{figure}[t!]
  \centering
  \subfloat[Central subspace estimation error for $f_1$]{\includegraphics[width=0.5\textwidth]{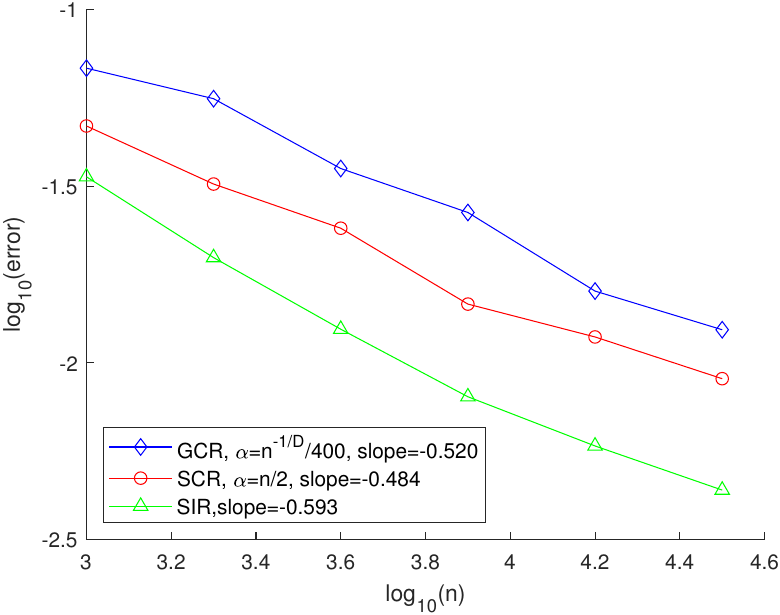}}
  \subfloat[Central subspace estimation error for $f_2$]{\includegraphics[width=0.5\textwidth]{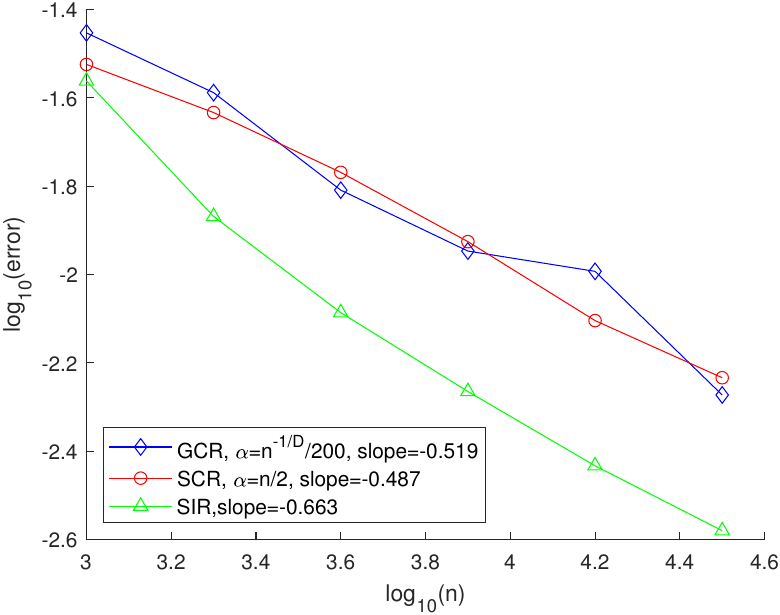}}
  \caption{(Experiment 2 -- comparison of GCR, SCR and SIR with $5\%$ noise.) Log-log plot of the central subspace estimation error versus $n$ for $f_1$ (a) and $f_2$ (b). SIR has the best performance when the function $g$ is monotonic.
%Each error is averaged over $10$ experiments.
}\label{fig.single}
\end{figure}
In the second experiment, we test and compare the performance of GCR, SCR and SIR on two monotonic single index models:
\begin{align*}
  f_1(\bx)=\left(\frac{x_1+x_2}{2}\right)^3,\ f_2(\bx)=e^{(x_1+x_2)/2},\ \bx\in\RR^{10}.
\end{align*}
Both functions can be expressed in model (\ref{eqyi}) with $D=10$ and $d=1$. They are $f_1(\bx)=g_1(z)=(z/\sqrt{2})^3$ and $f_2(\bx)=g_2(z)=e^{z/\sqrt{2}}$ with $z=\Phi^T\bx$ where $\Phi=\frac{1}{\sqrt{2}}(\be_1+\be_2)$. Here $\be_i \in \RR^D$ has $1$ in the $i$th entry and $0$ everywhere else. In this and the following experiments, the $\bx_i$'s are uniformly sampled from $[-1,1]^{D}$, and the sample size varies such that $n=10^3,10^{3.3}, 10^{3.6}, 10^{3.9}, 10^{4.2},10^{4.5}$.

We compare GCR,SCR and SIR with $5\%$ noise. Figure \ref{fig.single} shows the log-log plot of the central subspace estimation error versus $n$ for each function. In GCR, we use $\alpha=n^{-D}/200$ for $f_1$ and $\alpha=n^{-D}/400$ for $f_2$. For both functions, $\alpha=n/2$ is used in SCR and each slice in SIR is set to contain about 200 samples. The subspace error in SCR and GCR converges almost in the order of $n^{- 1 /2}$ as expected. When $g$ is monotonic, SIR yields the best performance. We will show in the next following experiments that SIR can easily fail when $g$ is not monotonic, while GCR can handle many more cases.

\subsection{Experiment 3: A non-monotonic function}

The third experiment is
\begin{equation}
  f(\bx)=\sin\left(-\frac{\pi}{2}+\frac{\pi}{6}\sum_{i=1}^9 x_i\right)+x_{10}, \ \bx \in \RR^{10}.
\end{equation}
This function can be expressed as the model in (\ref{eqyi}) with $D=10, d=2$, $g(z_1,z_2)= \sin\left(-\frac{\pi}{2}+\frac{\pi}{2}z_1\right)+z_2$, $\bz = \Phi^T \bx, \Phi=[\bv_1,\bv_2]$ and
$
\bv_1=\frac{1}{3}\sum_{i=1}^9 \be_i,\ \bv_2=\be_{10}.
$
%Here $\be_i \in \RR^D$ has $1$ in the $i$th entry and $0$ everywhere else. In this and the following experiments, the $\bx_i$'s are uniformly sampled from $[-1,1]^{D}$, and the sample size varies such that $n=10^3,10^{3.3}, 10^{3.6}, 10^{3.9}, 10^{4.2},10^{4.5}$.

\begin{figure}[h]
  \centering
  \subfloat[Central subspace estimation error]{\includegraphics[width=0.5\textwidth]{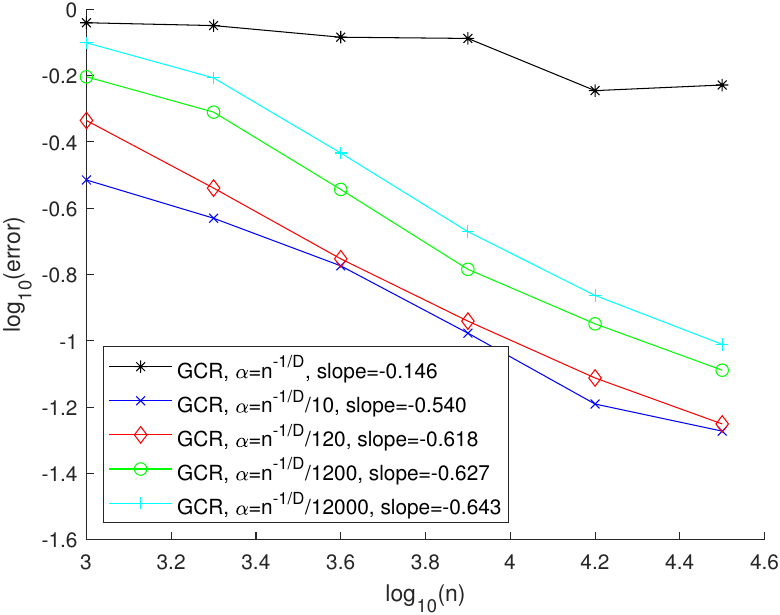}}
  \subfloat[Regression error]{\includegraphics[width=0.5\textwidth]{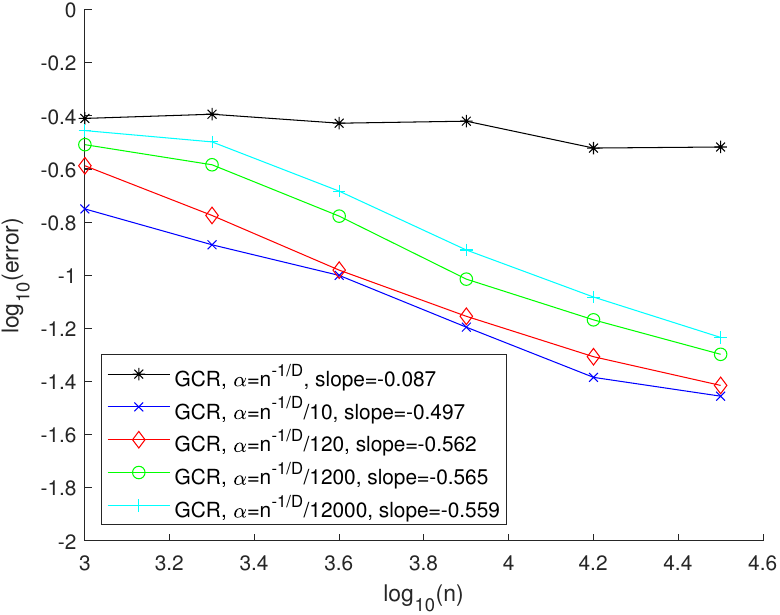}}
  \caption{(Experiment 3 -- performance of GCR without noise) Log-log plot of the central subspace estimation error (a) and the regression error (b) versus $n$, where $\alpha=Cn^{-1/D}$ in GCR with $C=1, 1/10,1/120,1/1200,1/12000$ respectively.
}\label{fig.sinCons}
\end{figure}

We first present the performance of GCR on noiseless data, i.e. $\sigma =0$, with different choices of the parameter $\alpha$. In GCR, the parameter $\alpha$ should be chosen as $\alpha=Cn^{-1/D}$ according to (\ref{eqalpha}). We set $C=1,1/10,1/120,1/1200,1/12000$ and show the central subspace estimation error versus $n$ in Figure \ref{fig.sinCons}(a) and the regression error versus $n$ in Figure \ref{fig.sinCons}(b). Each error is averaged over $10$ experiments. In log-log scale, the errors decay linearly as $n$ increases with the same rate as long as $C$ is not too large, as we expect. Our theory predicts the slopes in Figure \ref{fig.sinCons}(a) as $-0.5$, which are almost matched by the slopes in Figure \ref{fig.sinCons}(a). The success of GCR requires the condition $\alpha<\alpha_{\rm thresh}$, so GCR fails when $C$ is too large, i.e. $C =1$. The regression error is observed to converge in the order of $n^{-0.5}$ as long as $C$ is not too large.

\begin{figure}[t!]
  \centering
  \subfloat[Central subspace estimation error]{\includegraphics[width=0.5\textwidth]{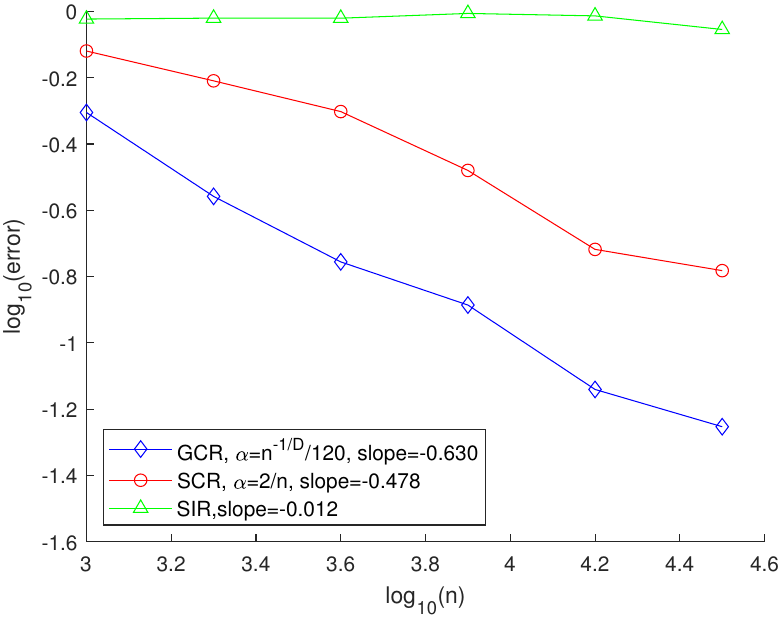}}
  \subfloat[Regression error]{\includegraphics[width=0.5\textwidth]{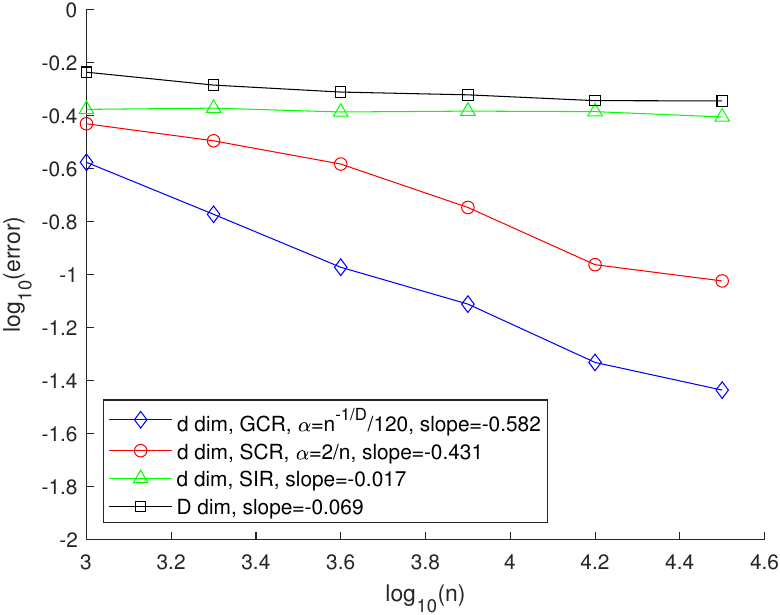}}
  \caption{(Experiment 3 -- Comparison of GCR, SCR and SIR without noise) Log-log plot of the central subspace estimation error (a) and the regression error (b) versus $n$ by GCR, SCR and SIR.
  %$\alpha=n^{-1/D}/120$ and $\alpha=2/n$ are used in GCR and SCR, respectively. In SIR, the number of slices is set such that each slice contains about 200 samples.
  The black curve in (b) represents the regression error of $f$ in $\RR^D$ without an estimation of the central subspace.
  %Each error is averaged over 10 experiments.
  GCR and SCR perform better than SIR, and GCR is the best. Estimating the central subspace greatly improves the rate of convergence in comparison with a direction regression in $\RR^D$.
  %For GCR, $\alpha= n^{-1/D}/120$ is used. For SCR, $\alpha=1/n$ is used. For SIR, each slice contains about 200 samples.
  }\label{fig.sin}
\end{figure}

We next compare the performance of GCR, SCR and SIR with noiseless data. We set $\alpha=n^{-1/D}/120$ in GCR, and $\alpha=2/n$ in SCR since it provides the best results among many choices. In SIR, each slice contains about 200 samples. We display the central subspace estimation error versus $n$ in Figure \ref{fig.sin}(a) and the regression error versus $n$ in Figure \ref{fig.sin}(b).
Each error is the averaged error of 10 experiments. GCR and SCR perform better than SIR, and GCR is the best. Estimation of the central subspace greatly improves the rate of convergence in comparison with a direct regression in $\RR^D$.

%GCR provides the best convergence rate, 0.630, for the central subspace estimation error, followed by SCR with 0.478. SIR is much worse than them, with convergence rate 0.012. Figure \ref{fig.sin} shows the regression error. For comparison, the result by directly estimating $f$ in $\mathds{R}^D$ is also shown. Again, GCR provides the best convergence rate.

\begin{figure}[t!]
  \centering
  \subfloat[Central subspace estimation error]{\includegraphics[width=0.5\textwidth]{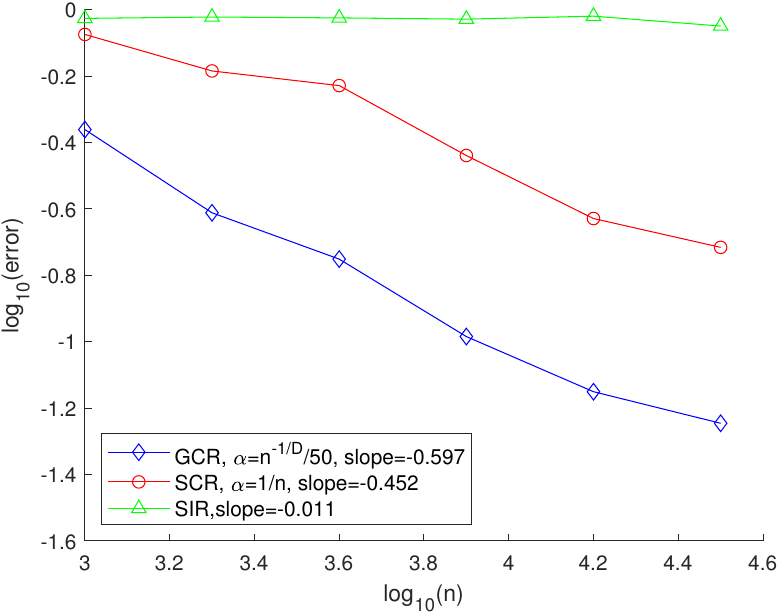}}
  \subfloat[Regression error]{\includegraphics[width=0.5\textwidth]{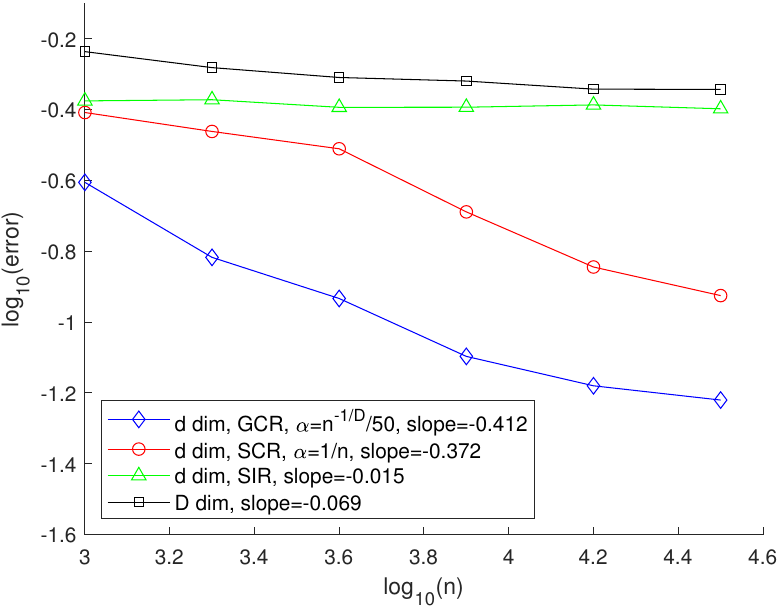}}
  \caption{(Experiment 3 -- Comparison of GCR, SCR and SIR with $5\%$ noise)
  Log-log plot of the central subspace estimation error (a) and the regression error (b) versus $n$ by GCR, SCR and SIR.
  %$\alpha=n^{-1/D}/50$ and $\alpha=1/n$ are used in GCR and SCR, respectively. In SIR, the number of slices is set such that each slice contains about 200 samples.
  The black curve in (b) represents the regression error of $f$ in $\RR^D$ without an estimation of the central subspace.
  GCR and SCR perform better than SIR, and GCR is the best. Estimating the central subspace greatly improves the rate of convergence in comparison with a direction regression in $\RR^D$.
  }\label{fig.sinNoise}
\end{figure}

Results with $5\%$ noise are shown in Figure \ref{fig.sinNoise}. We set $\alpha=n^{-1/D}/50$ and $\alpha=1/n$ in GCR and SCR. In SIR, each slice contains about 200 samples. %Each error is averaged over $10$ experiments. %GCR has a convergence rate $n^{-0.570}$ of for the central subspace estimation error and $n^{-0.386}$ for the regression error.
We observe that GCR perform better than SCR and SIR.

\begin{figure}[t!]
  \centering
% \hspace{0.4cm} $50\%$ noise \hspace{2.8cm} $100\%$ noise \hspace{2.8cm} $120\%$ noise
%  \\
  \subfloat[($50\%$ noise) Noiseless $f(\bx)$ in black and noisy $y$ in gray along the $\bv_1$ direction]{\includegraphics[width=0.3\textwidth]{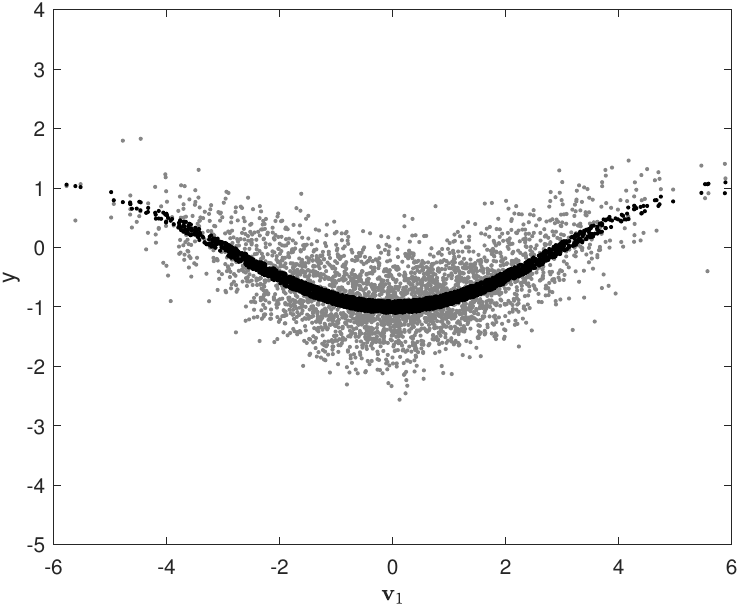}}\hspace{0.2cm}
  \subfloat[($100\%$ noise) Noiseless $f(\bx)$ in black and noisy $y$ in gray along the $\bv_1$ direction]{\includegraphics[width=0.3\textwidth]{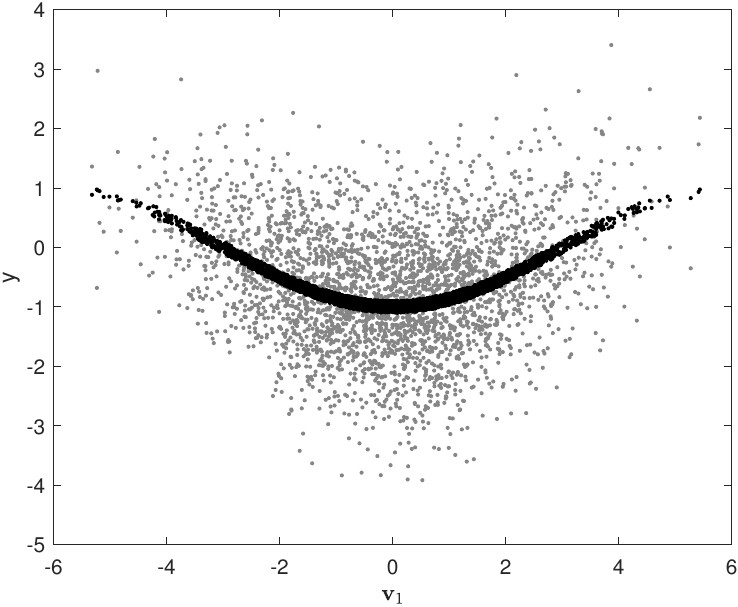}}\hspace{0.2cm}
  \subfloat[($120\%$ noise) Noiseless $f(\bx)$ in black and noisy $y$ in gray along the $\bv_1$ direction]{\includegraphics[width=0.3\textwidth]{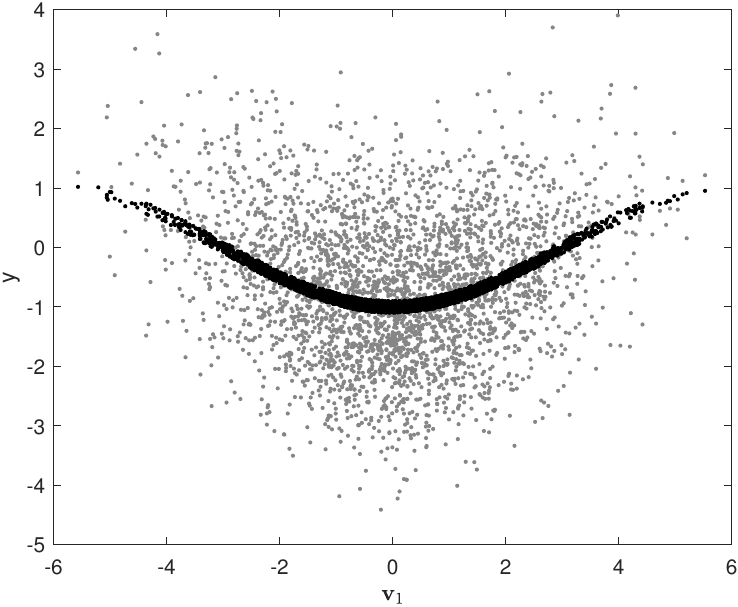}}\\
  \subfloat[($50\%$ noise) Noiseless $f(\bx)$ in black and noisy $y$ in gray along the $\bv_2$ direction]{\includegraphics[width=0.3\textwidth]{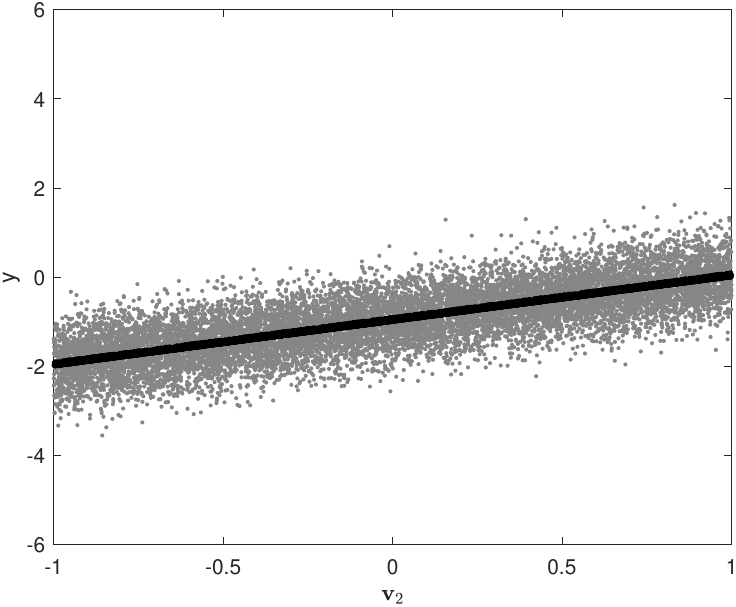}}\hspace{0.2cm}
  \subfloat[($100\%$ noise) Noiseless $f(\bx)$ in black and noisy $y$ in gray along the $\bv_2$ direction]{\includegraphics[width=0.3\textwidth]{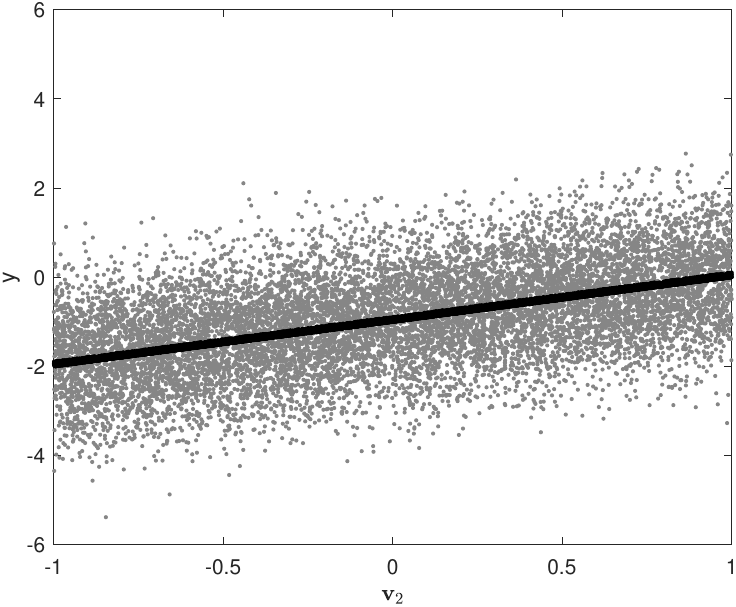}}\hspace{0.2cm}
  \subfloat[($120\%$ noise) Noiseless $f(\bx)$ in black and noisy $y$ in gray along the $\bv_2$ direction]{\includegraphics[width=0.3\textwidth]{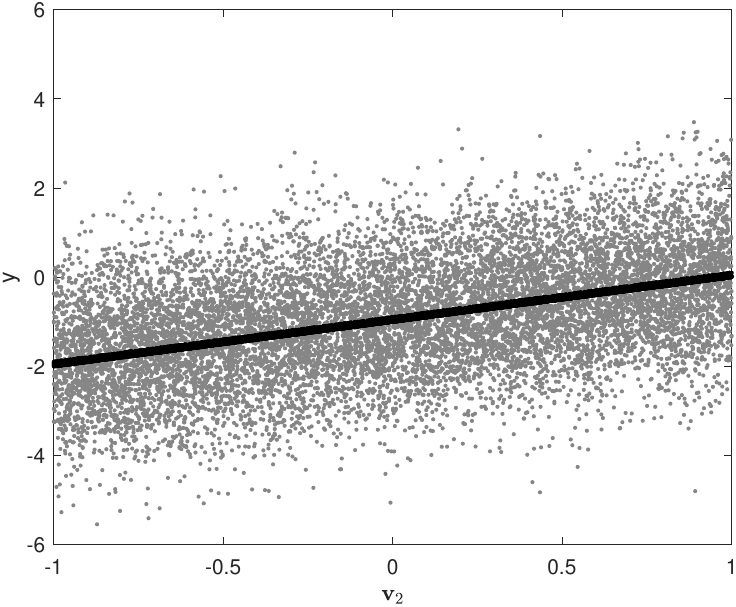}}\\
  \subfloat[($50\%$ noise) Central subspace estimation error]{\includegraphics[width=0.3\textwidth]{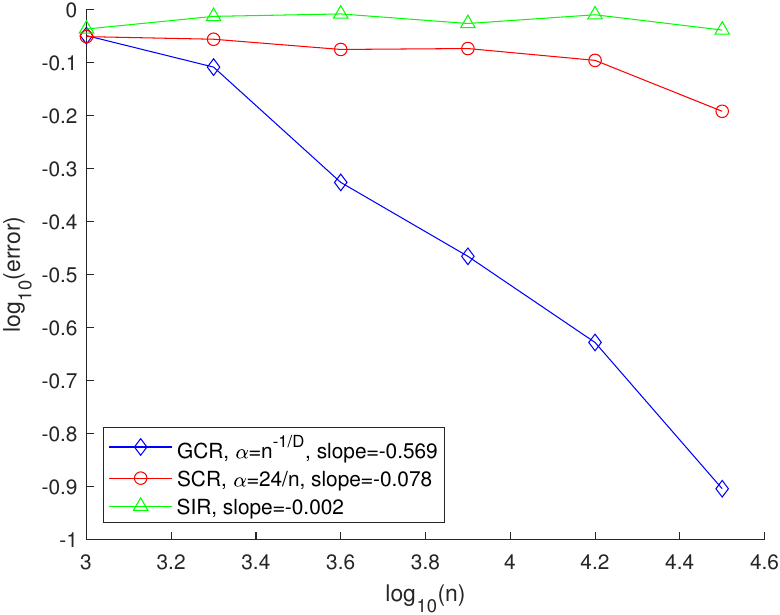}}\hspace{0.2cm}
  \subfloat[($100\%$ noise) Central subspace estimation error]{\includegraphics[width=0.3\textwidth]{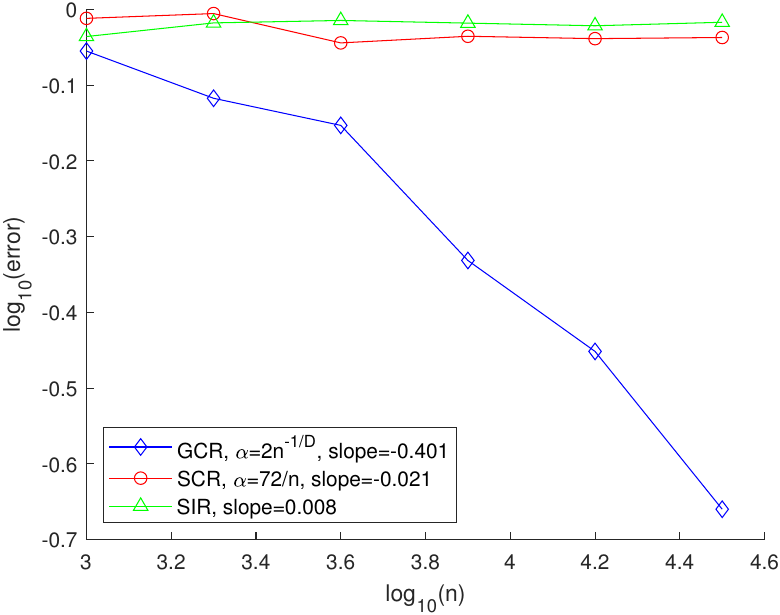}} \hspace{0.2cm}
  \subfloat[($120\%$ noise) Central subspace estimation error]{\includegraphics[width=0.3\textwidth]{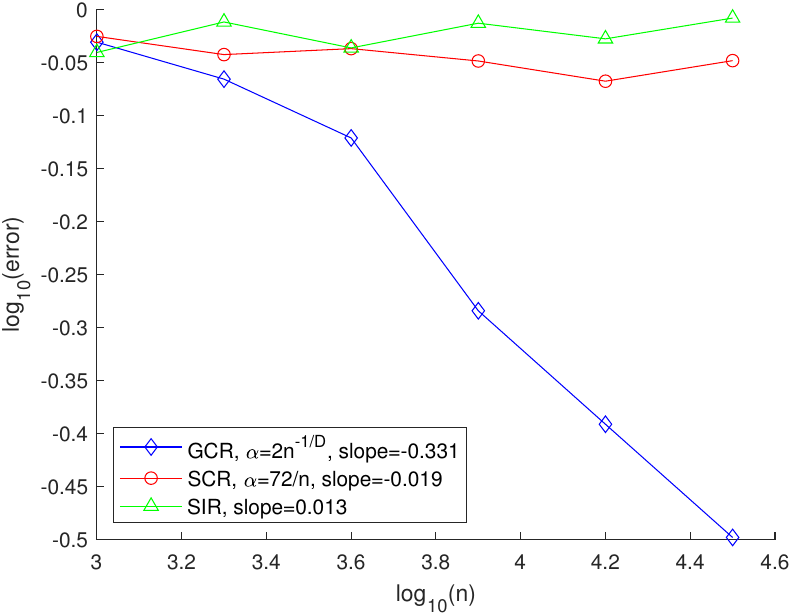}} \hspace{0.2cm}
  \caption{(Experiment 3 -- Comparison of GCR, SCR  and SIR %with $n=31623$ and
  with heavy noise: $50\%$ noise in the left column, $100\%$ in the middle column and $120\%$ in the right column)
  The first row shows the visualization of the noiseless $f(\bx)$ in black and the noisy $y$ in gray along the $\bv_1$ direction while $-0.1<x_{10}<0.1$.
The second row shows the visualization of the noiseless $f(\bx)$ in black and the noisy $y$ in gray along the $\bv_2$ direction while $-0.9<\sum_1^9x_i<0.9$.
%Noisy data is in gray and clean data is in black.
The third row shows the log-log plot of the central subspace estimation error by GCR, SCR and SIR versus $n$.
  }\label{fig.sinNoiseH}
\end{figure}

We then compare GCR, SCR and SIR with heavy noise -- $50\%, 100\%$ and $120\%$ noise. Visualization of data %containing 31623 samples with $50\%, 100\%$ and $120\%$ noise
along the $\bv_1$ and $\bv_2$ directions is shown in the first and second row of Figure \ref{fig.sinNoiseH}.
%In the first row, samples satisfying $-0.1<x_{10}<0.1$ are visualized. In the second row, samples satisfying $-0.9<\sum_1^9x_i<0.9$ are visualized. The noisy data is shown in gray and the corresponding clean data is in black.
%We set $\alpha=n^{-1/D}, 2n^{-1/D}, 2n^{-1/D}$ in GCR and $\alpha=24/n, 72/n, 72/n$ in SCR for the three sets of data, respectively. In SIR, each slice contains about 200 samples.
The third row shows the log-log plot of the central subspace estimation error by GCR, SCR and SIR versus $n$.
Each error is averaged over $10$ experiments. We observe that GCR is very robust against heavy noise -- the central subspace estimation error converges in the order of $n^{-0.569}$ with $50\%$ noise, and the rate slightly degrades in the presence of $100\%$ and $120\%$ noise. In comparison, SCR and SIR tend to fail when noise is heavy.

\subsection{Experiment 4: A non-monotonic function}
The fourth experiment is
\begin{equation}
  f(\bx)=x_1^2+(x_2+x_3)^2, \ \bx \in \RR^{10}
\end{equation}
This function can be expressed as the model in (\ref{eqyi}) with $d=2,D=10$, $g(z_1,z_2)=z_1^2+2z_2^2$, $\bz=\Phi^T\bx,\Phi=[\bv_1,\bv_2]$ and $\bv_1=\be_1,  \bv_2=(\be_2+\be_3)/{\sqrt 2}.$
This experiment is more challenging since $f$ is not monotonic along both $\bv_1$ and $\bv_2$ directions.

\begin{figure}[h!]
  \centering
  \subfloat[Central subspace estimation error]{\includegraphics[width=0.5\textwidth]{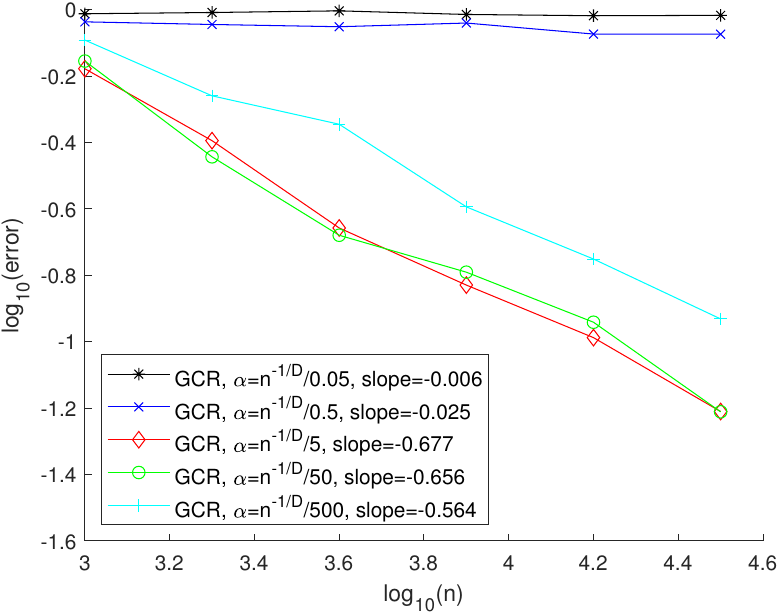}}
  \subfloat[Regression error]{\includegraphics[width=0.5\textwidth]{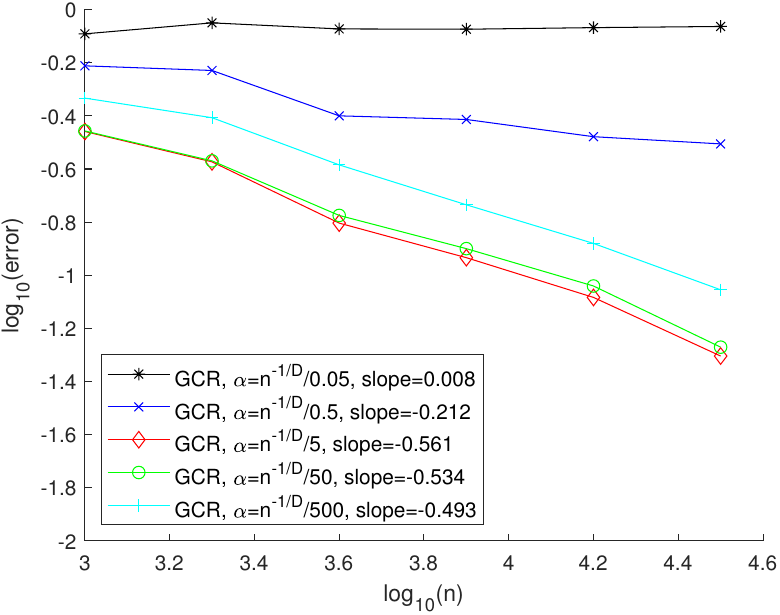}}
  \caption{(Experiment 4 -- performance of GCR without noise) Log-log plot of the central subspace estimation error (a) and the regression error (b) versus $n$, where $\alpha=Cn^{-1/D}$ in GCR with $C=1/0.05,1/0.5,1/5,1/50,1/500$ respectively.}\label{fig.quadCons}
\end{figure}

Performance of GCR with $\alpha=Cn^{-1/D}$ where $C=1/0.05,1/0.5,1/5,1/50,1/500$ on noiseless data is presented in Figure \ref{fig.quadCons}. The central subspace estimation error and the regression error are shown in Figure \ref{fig.quadCons} (a) and (b), respectively. Each error is averaged over 10 experiments. Our observation is similar to Experiment 2 that, in log-log scale, both errors decay linearly as $n$ increases with the same slope, as long as $C$ is not too large.
%Since $d=2$, the slopes of the log-log plot for the central subspace estimation error is expected to be -0.5. Similar to experiment 2, as $n$ increases, both errors decay linearly in log-log scale. If $C$ is small enough, the slopes for both errors are around -0.5.

%Then we compare the performance of GCR and SCR on this function with noiseless data. We use $\alpha=n^{-1/D}/500$ in GCR and $\alpha=2/n$ in SCR.
%The central subspace estimation error and the regression error versus $n$ are shown in Figure \ref{fig.quad}. GCR is better than SCR and both methods are much better than direct regression in $D$ dimensional space.

%The error behavior of central subspace estimation error and regression error are shown in Figure \ref{fig.quad}. Each error is the averaged error from 10 experiments. GCR (with convergence rate 0.579 for central subspace estimation error and 0.538 for regression error) is better than SCR (with convergence rate 0.336 for central subspace estimation error and 0.312 for regression error)

%choose $\alpha= 1/n, n^{-1/d}/32, n^{-1/D}/500$. The comparison of central subspace approximation error and regression error are shown in Figure \ref{fig.quad}. In this experiment, compared to SCR, the advantage of GCR is more obvious. SCR performs good for monotonic functions. But for non-monotonic function, SCR cannot tell whether two points are really close along central subspace. For both errors, GCR with $\alpha$ of $O(n^{-1/D})$  provides the best convergence rate, larger than 0.5. The convergence rates of SCR are around 0.3.

In Figure \ref{fig.quadNoise}, we compare GCR, SCR and SIR with $5\%$ noise. We set $\alpha=n^{-1/D}/50$ in GCR and $\alpha=1/n$ in SCR. In SIR, each slice contains about 200 samples. We show the central subspace estimation error versus $n$ in Figure \ref{fig.quadNoise} (a) and the regression error versus $n$ in Figure \ref{fig.quadNoise} (b). Each error is averaged over 10 experiments. Among the three methods, GCR yields the smallest error and the fastest rate of convergence.
%with rates around $n^{-0.5}$ in both errors, which matches our prediction of the rate of the central subspace estimation error.
%GCR has convergence rate 0.670 on the central subspace estimation error and 0.469 on the regression error, which are better than the rates of SCR and SIR.

\begin{figure}[h!]
  \centering
  \subfloat[Central subspace estimation error]{\includegraphics[width=0.5\textwidth]{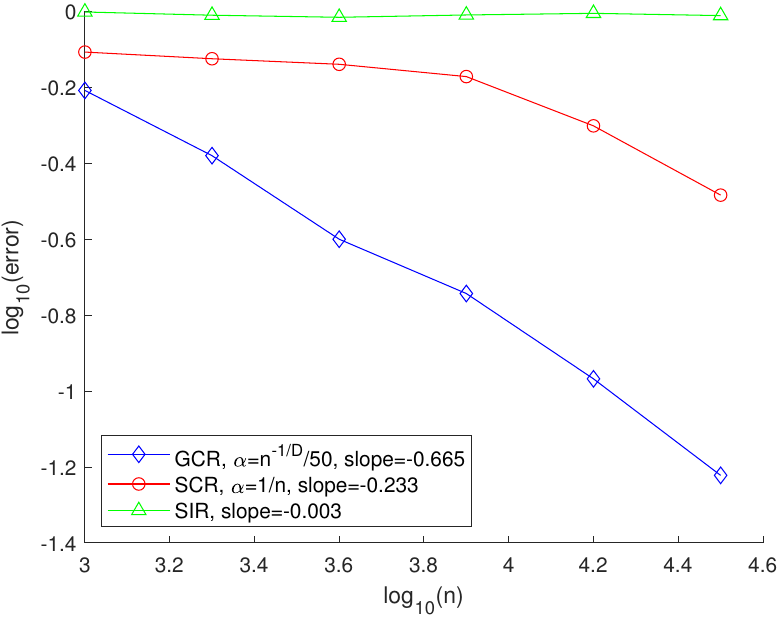}}
  \subfloat[Regression error]{\includegraphics[width=0.5\textwidth]{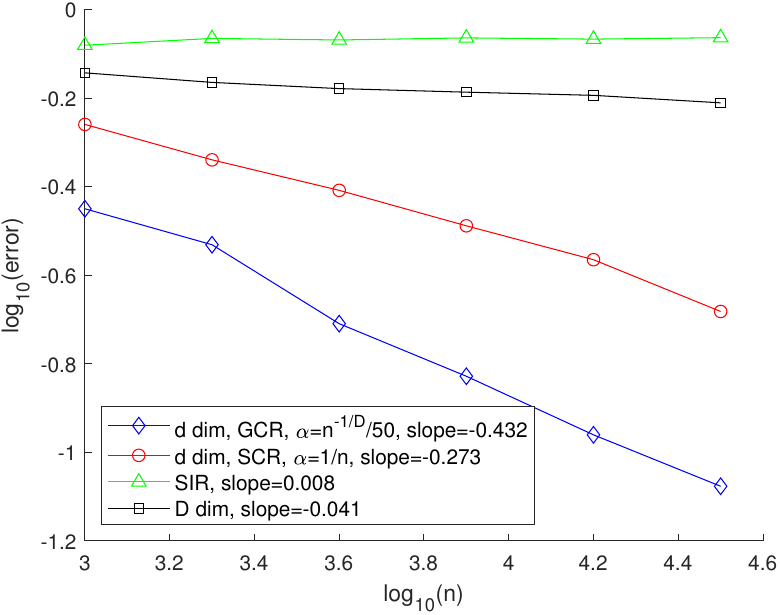}}
  \caption{(Experiment 4 -- Comparison of GCR, SCR and SIR with $5\%$ noise)
  Log-log plot of the central subspace estimation error (a) and the regression error (b) versus $n$ by GCR and SCR.
  %$\alpha=n^{-1/D}/50$ and $\alpha=1/n$ are used in GCR and SCR, respectively.
  The black curve in (b) represents the regression error of $f$ in $\RR^D$ without an estimation of the central subspace.
  GCR performs better than SCR and SIR. Estimation of the central subspace greatly improves the rate of convergence in comparison with direction regression in $\RR^D$.
  }\label{fig.quadNoise}
\end{figure}

Results with $50\%$ noise are shown in Figure \ref{fig.quadNoise50}. We set $\alpha=n^{-1/D}$ in GCR and $\alpha=24/n$ in SCR. In SIR, each slice contains about 200 samples. %With $n=31623$,
The noisy $y$ (gray) and the noiseless $f(\bx)$ (black) are shown in Figure \ref{fig.quadNoise50} (a) along the $\bv_1$ direction with $-0.2<x_2+x_3<0.2$, and in Figure \ref{fig.quadNoise50} (b) along the $\bv_2$ direction with $-0.1<x_1<0.1$.
The central subspace estimation error by the three methods are displayed in Figure \ref{fig.quadNoise50} (c). Each error is averaged over $10$ experiments. SCR and SIR perform poorly in this test, while GCR is very robust to heavy noise. The central subspace estimation error decays as $O(n^{-0.43})$, which is close to our prediction of $O(n^{-0.5})$.
%The next experiment we consider is for noisy data with $5\%$ noise. $\alpha=n^{-1/D}/50$ is used for GCR and $\alpha=1/n$ is used for SCR. The error behavior is shown in Figure \ref{fig.quadNoise}. Each error is the averaged error from 10 experiments. With noise,  the advantage of GCR is more obvious. GCR provides results with convergence rate 0.670 and 0.469 for central subspace estimation error and regression error respectively, while SCR only has convergence rate 0.278 and 0.292.

%\begin{figure}[t!]
%  \centering
%  \subfloat[central subspace estimation error]{\includegraphics[width=0.5\textwidth]{figure/quad_subspace.eps}}
%  \subfloat[regression error]{\includegraphics[width=0.5\textwidth]{figure/quad_regression.eps}}
%  \caption{(experiment 3 -- Comparison of GCR and SCR without noise) Log-log plot of the central subspace estimation error (a) and the regression error (b) versus $n$ by GCR and SCR. $\alpha=n^{-1/D}/500$ and $\alpha=2/n$ are used in GCR and SCR, respectively. The black curve in (b) represents the regression error of $f$ in $\RR^D$ without an estimation of the central subspace.
%  %Each error is averaged over 10 experiments.
%  GCR performs better than SCR. Estimation of the central subspace greatly improves the rate of convergence in comparison with direction regression in $\RR^D$.}\label{fig.quad}
%\end{figure}

\begin{figure}[h!]
  \centering
  \subfloat[($50\%$ noise) Noiseless $f(\bx)$ in black and noisy $y$ in gray along the $\bv_1$ direction]{\includegraphics[width=0.3\textwidth]{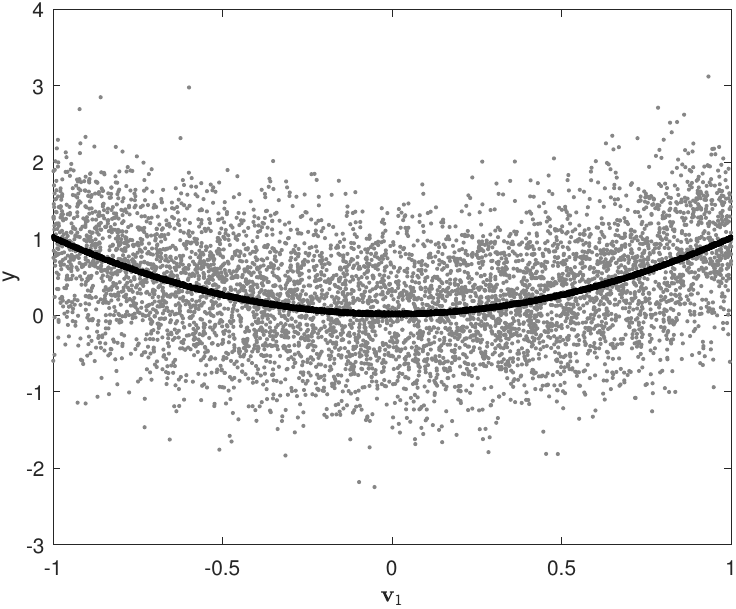}}\hspace{0.2cm}
  \subfloat[($50\%$ noise) Noiseless $f(\bx)$ in black and noisy $y$ in gray along the $\bv_2$ direction]{\includegraphics[width=0.3\textwidth]{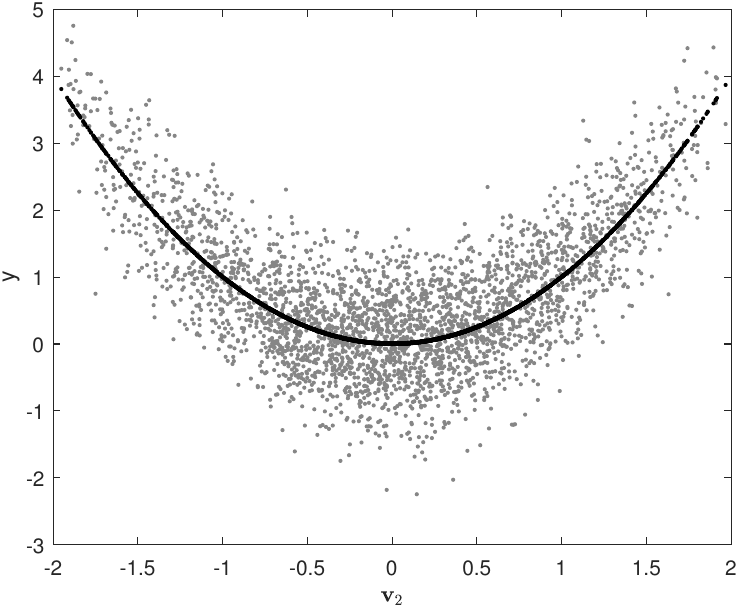}}\hspace{0.2cm}
  \subfloat[Central subspace estimation error]{\includegraphics[width=0.3\textwidth]{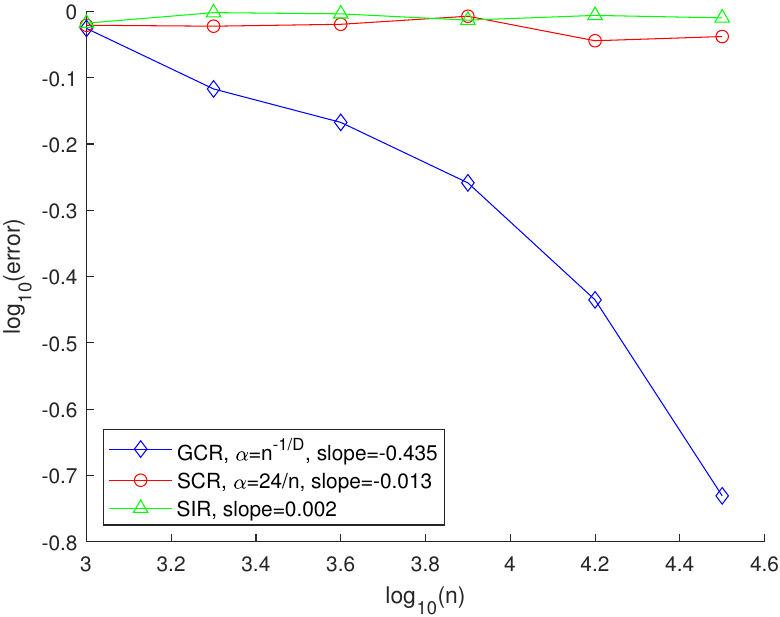}}
  \caption{(Experiment 4 -- Comparison of GCR, SCR and SIR %with $n=31623$ and
  with $50\%$ noise)
  Visualization of noiseless $f(\bx)$ in black and the noisy $y$ in gray along the $\bv_1$ direction while $-0.2<x_2+x_3<0.2$ (a) and along the $\bv_2$ direction while $-0.1<x_1<0.1$ (b). (c) is the log-log plot of the central subspace estimation error versus $n$ by GCR, SCR and SIR.
  %$\alpha=n^{-1/D}/50$ and $\alpha=1/n$ are used in GCR and SCR, respectively.
  GCR performs better than SCR and SIR. GCR is very robust against heavy noise.
  }\label{fig.quadNoise50}
\end{figure}

\subsection{Experiment 5: A non-monotonic function}
In Experiment 3 and 4, the function $f$ can be written as a sum of two single index models. We next compare GCR, SCR and SIR on functions without such structures. Consider
\begin{align*}
  &f(\bx)=10\sin\left(\frac{\pi}{5}\left(x_1+2(x_2+x_3)^2\right)\right), \ \bx \in \RR^{10}
\end{align*}
which can be expressed in the model (\ref{eqyi}) with $D=10, d=2,$
$g(z_1,z_2)=10\sin\left(\frac{\pi}{5}\left(z_1+z_2^2\right)\right)$, $\bz=\Phi^T\bx, \Phi=[\bv_1,\bv_2]$ and
$\bv_1=\be_1,  \bv_2=(\be_2+\be_3)/{\sqrt 2}.$
We test GCR, SCR and SIR with $5\%$ and $50\%$ noise. %For data with $5\%$ noise, We set $\alpha=n^{-1/D}/50$ in GCR and $\alpha=1/n$ in SCR. For data with $50\%$ noise, we set $\alpha=n^{-1/D}$ in GCR and $\alpha=24/n$ in SCR. In both cases, each slice in SIR contains about 200 samples.
The log-log plot of the central subspace estimation error versus $n$ is displayed in Figure \ref{fig.sinmulti4}. Each error is averaged over $10$ experiments. GCR has a convergence rate of $O(n^{-0.565})$ with $5\%$ noise and $O(n^{-0.447})$ with $50\%$ noise, %which is the best among the three methods. The convergence rate is close to our prediction.
which is robust to heavy noise and performs better than SCR and SIR.

\begin{figure}[h!]
  \subfloat[($5\%$ noise) Central subspace estimation error]{\includegraphics[width=0.46\textwidth]{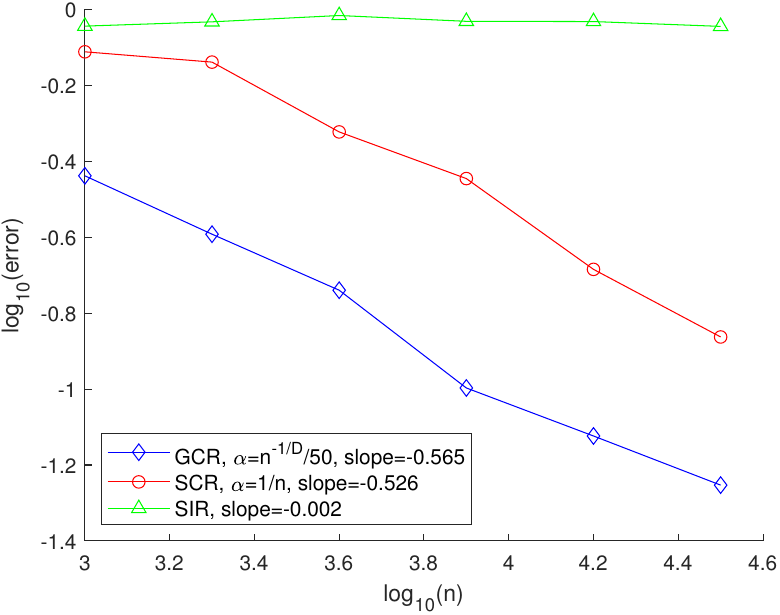}} \hspace{0.2cm}
  \subfloat[($50\%$ noise) Central subspace estimation error]{\includegraphics[width=0.46\textwidth]{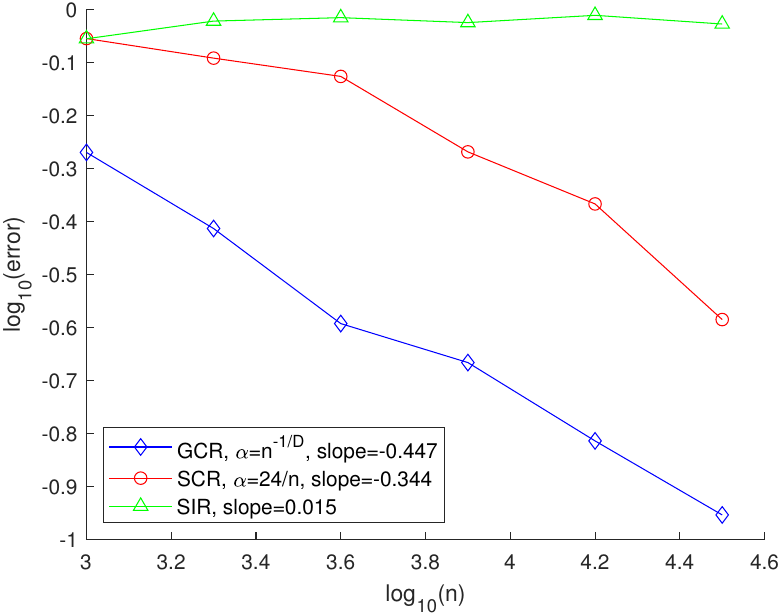}}
  \caption{(Experiment 5 -- Comparison of GCR, SCR and SIR with $5\%$ and $50\%$ noise) Log-log plot of the central subspace estimation error versus $n$ by GCR, SCR and SIR with $5\%$ noise (a) and $50\%$ noise (b).
  %$\alpha=n^{-1/D}/500$ and $\alpha=2/n$ are used in GCR and SCR, respectively.
  %GCR is robust to heavy noise and performs significantly better than SCR and SIR.
  }\label{fig.sinmulti4}
\end{figure}

\subsection{Experiment 6: A non-monotonic function}
Our last experiment is
\begin{align*}
  &f(\bx)=2x_1^2(x_2+x_3)^2, \ \bx \in \RR^{10}
\end{align*}
which can be expressed in the model (\ref{eqyi}) with $D=10, d=2,$
$g(z_1,z_2)=4z_1^2z_2^2$, $\bz=\Phi^T\bx, \Phi=[\bv_1,\bv_2]$ and $\bv_1=\be_1, \bv_2=(\be_2+\be_3)/{\sqrt 2}.$

We test GCR, SCR and SIR with $5\%$ and $50\%$ noise.
The log-log plot of the central subspace estimation error versus $n$ is displayed in Figure \ref{fig.sinmulti5}. Each error is averaged over $10$ experiments. GCR has a convergence rate of $O(n^{-0.575})$ with $5\%$ noise and $O(n^{-0.523})$ with $50\%$ noise, which is robust to heavy noise and performs significantly better than SCR and SIR.

\begin{figure}[t!]
  \subfloat[($5\%$ noise) Central subspace estimation error]{\includegraphics[width=0.46\textwidth]{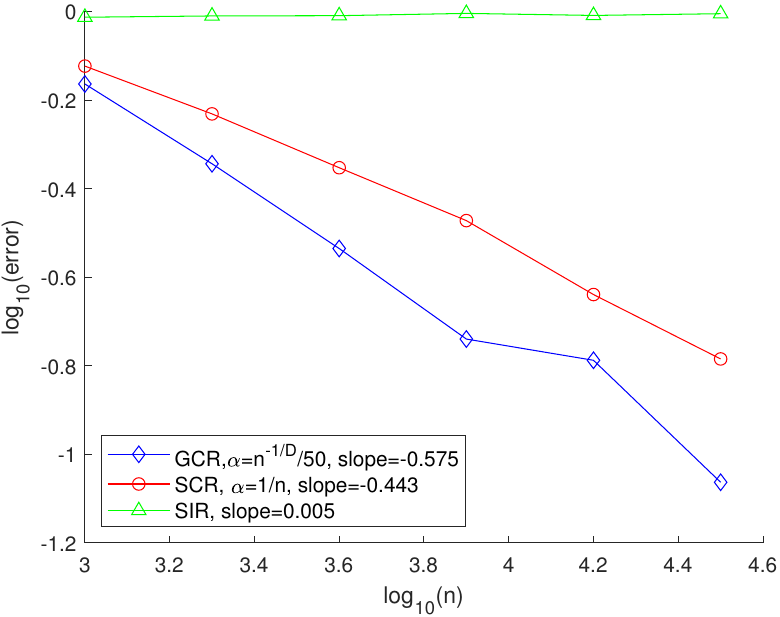}} \hspace{0.2cm}
  \subfloat[($50\%$ noise) Central subspace estimation error]{\includegraphics[width=0.46\textwidth]{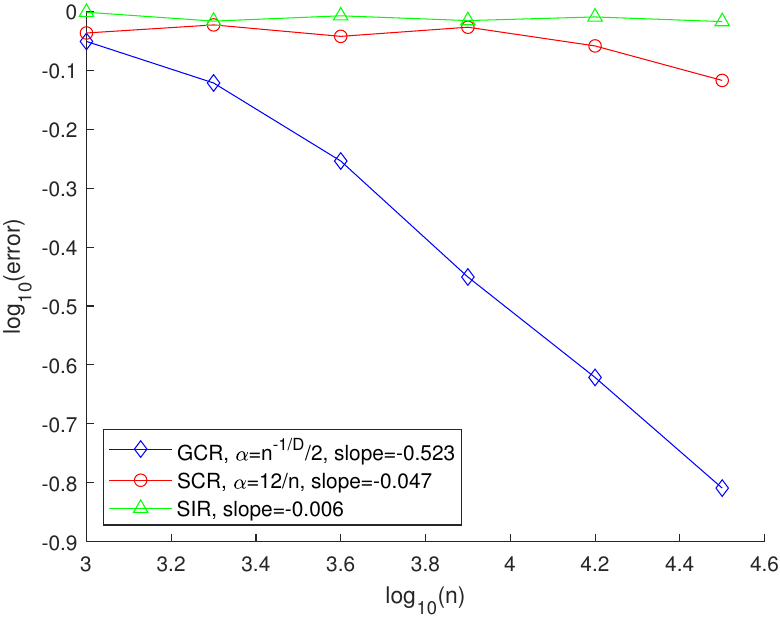}}
  \caption{(Experiment 6 -- Comparison of GCR, SCR and SIR with $5\%$ and $50\%$ noise.) Log-log plot of the central subspace estimation error versus $n$ by GCR, SCR and SIR with $5\%$ noise (a) and $50\%$ noise (b).
  %$\alpha=n^{-1/D}/500$ and $\alpha=2/n$ are used in GCR and SCR, respectively.
  GCR performs better than SCR and SIR.}\label{fig.sinmulti5}
\end{figure} 
%!TEX root = main.tex

\section{Proof of main results}
\label{sec.mainproof}
This section contains the proof of our main results in Section \ref{sec.MainResults}.
Lemmas are proved in the Supplementary materials. %We will postpone proof of the lemmas used in this Section to Supplementary materials .
\subsection{Concentration inequality for matrix-valued U-statistics and proof of Theorem \ref{thm.HDisVf}}
\label{sec.mainproof.matrixU}
%In this subsection, we show the Bernstein inequality for matrix-valued $U$-statistics, which may be of independent interest.
U-statistics was introduced by Hoeffding \cite{hoeffding1992class} and the concentration inequalities for scalar-valued U-statistics  have been well studied \cite{hoeffding1992class},\cite[Chapter 5]{serfling2009approximation}. To our knowledge, there are limited results on matrix-valued U-statistics. The concentration inequality for a modified matrix-valued U-statistics can be found in \cite{minsker2020robust}. In this paper, we prove a Bernstein inequality for matrix-valued U-statistics using the tools in \cite{tropp2012user,ahlswede2002strong}. 
%provide a detailed proof which may be of independent interest. 
We denote $\mathbb{H}^{D \times D}$ as the set of $D \times D$ real-valued Hermitian matrices.

%Such inequality is well known \cite{minsker2020robust} but, to our knowledge, cannot be found in the literature.  We define the matrix $U$-statistics as:
\begin{defi}[Matrix U-statistics]
Let $\bx$ be a random variable with probability distribution $\rho$ in $\RR^D$, and $W: \RR^D\times \cdots \times \RR^D \rightarrow \mathbb{H}^{D \times D}$ be a matrix-valued kernel with $m$ inputs. Suppose we can access $n \ge m$ i.i.d. copies of $\bx$, denoted by $\{\bx_i\}_{i=1}^n$.  The U-statistic of order $m$ and kernel $W$ based on $\{\bx_i\}_{i=1}^n$ is defined as
  \begin{align}
    U_n=\frac{(n-m)!}{n!}\sum_{(i_1,...,i_m)\in I_m^n} W(\bx_{i_1},...,\bx_{i_m})
    \label{ustateq1}
  \end{align}
  where
  $$
  I_m^n=\{(i_1,...,i_m)| 1\leq i_j\leq n, i_j\neq i_k \mbox{ for } j\neq k\}
  $$
  is the set of all $m$-tuple of integers between 1 and $n$.
\end{defi}

We say $W$ is permutation symmetric if for any $(\bx_1,...,\bx_m)$ and any permutation $\pi$, %denoted by $(\bx_{\pi_1},...,\bx_{\pi_m})$,
$W(\bx_1,...,\bx_m)=W(\bx_{\pi_1},...,\bx_{\pi_m})$.
When $W$ is permutation symmetric, $U_n$ can be written as
\begin{align}
    U_n=\frac{1}{\binom{n}{m}}\sum_{1\leq i_1<\cdots<i_m\leq n} W(\bx_{i_1},...,\bx_{i_m}).
    \label{ustateq2}
\end{align}

U-statistics are unbiased estimators of $\EE W(\bx_1,...,\bx_m)$ where the expectation is taken over the joint distribution of $\bx_1,...,\bx_m$. %and have the minimal variances among all unbiased estimators.
We use tools from \cite{tropp2012user,ahlswede2002strong} to prove the following lemma about the concentration of the matrix-valued U-statistics:

\begin{lem}[Bernstein inequality for matrix-valued U-statistics]
\label{lem.BernsteinU}
Let $\bx$ be a random variable with probability distribution $\rho$ in $\RR^D$, and $W: \RR^D\times \cdots \times \RR^D \rightarrow \mathbb{H}^{D \times D}$ be a permutation symmetric matrix-valued kernel with $m$ inputs. 
%Let $\{\bx_i\}_{i=1}^n$ be a set of i.i.d. $D$-dimensional samples and $W(\bx_1,...,\bx_m)$ be a permutation symmetric kernel with $m$ inputs and whose output is a Hermitian matrix.
Assume
\begin{equation}
\label{eqwcondition}
\|W(\bx_1,...,\bx_m)\|\leq R \text{ for any }(\bx_1,...,\bx_m), \text{ and } \|\EE\left[ (W-\EE W)(W-\EE W)^*\right]\|\leq \sigma_W^2.
\end{equation} 
Suppose we can access $n \ge m$ i.i.d. copies of $\bx$, denoted by $\{\bx_i\}_{i=1}^n$. Let $U_n$ be the U-statistics of $W$ defined in (\ref{ustateq2}). 
Then for any $t>0$,
  \begin{align}
  \PP(\|U_n-\EE U_n\|\geq t)\leq D\exp\left(-\frac{nt^2/4}{m(\sigma_W^2+Rt/3)}\right).
\end{align}
\end{lem}

\begin{proof}[Proof of Lemma \ref{lem.BernsteinU}]
Let $k=\left\lfloor \frac{n}{m}\right\rfloor$. Define
\begin{align}
  V(\bx_1,...,\bx_n)&=\frac{1}{k}\left(\underbrace{W(\bx_1,...,\bx_m)}_{V_1}+\cdots+\underbrace{W(\bx_{(i-1)m+1},...,\bx_{im})}_{V_i}+\cdots +\underbrace{W(\bx_{(k-1)m+1},...,\bx_{km})}_{V_k} \right)
  \label{eq.V}
\end{align}
where $V_i=W(\bx_{(i-1)m+1},\bx_{(i-1)m+2},...,\bx_{im})$. Note that $V$ is the average of $k$ independent empirical realizations of $W$.
Let $\pi$ be a permutation of $(1,2,...,n)$ and $V^{\pi}=V(\bx_{\pi_1},...,\bx_{\pi_n})$. %be the $V$ computed using this permutation of data.
The U-statistics in \eqref{ustateq1} can be written as
\begin{align}
  U_n=\frac{1}{n!}\sum_{\pi\in I^n}  V^{\pi},
  \label{eq.U}
\end{align}
where $I^n$ denotes the set of all permutations of $(1,2,...,n)$. We have $\EE U_n=\EE V^{\pi}=\EE V$ for any permutation $\pi$. Then
\begin{align}
U_n-\EE U_n=\frac{1}{n!}\sum_{\pi\in I^n} (V^{\pi}-\EE V).
\end{align}
%For the above case, $T=U$ and $p_i=\frac{1}{n!}$.\\
%Not that $\|T\|=\lambda_{\max}(T)$ and $\|T\|\leq \sum p_i\|T_i\|$.
For any $\mu,t>0$, we have
\begin{align}
  \PP(\|U_n-\EE U_n\|\geq t)&=\PP(e^{\mu\|U_n-\EE U_n\|}\geq e^{\mu t})\nonumber\\
  &\leq e^{-\mu t}\EE e^{\mu\|U_n-\EE U_n\|} \quad {\mbox{by Markov's inequality}} \nonumber\\
  &\leq e^{-\mu t}\EE e^{\mu\frac{1}{n!}\sum_{\pi\in I^n}\|V^{\pi}-\EE V\|}\nonumber \\
   &\leq  e^{-\mu t}\frac{1}{n!}\sum_{\pi\in I^n}\EE e^{\mu\|V^{\pi}-\EE V\|} \quad {\mbox{by the convexity of exponential functions}} \nonumber\\
  &=e^{-\mu t}\EE e^{\mu\|V^{\pi}-\EE V\|}\nonumber\\
  &=e^{-\mu t}\EE e^{\mu\|V-\EE V\|}.
  \label{eq.PU}
\end{align}
Next we bound $\EE e^{\mu\|V-\EE V\|}$.
By the assumption in \eqref{eqwcondition}, for each $V_i$ in \eqref{eq.V}, we have
\begin{align}
  \|V_i-\EE V_i\|\leq R, \text{ and } \EE \|(V_i-\EE V_i)(V_i-\EE V_i)^\top\|\leq \sigma_W^2.
\end{align}
The Bernstein inequality for U-statistics is derived through several important results in \cite{tropp2012user,ahlswede2002strong}. For $0<\mu/k< 3/R$,
\begin{align*}
  \EE e^{\mu\|V-\EE V\|}&\leq \EE \ \tr \left(e^{\mu(V-\EE V)}\right)\quad \mbox{by (3.2.3) of \cite{tropp2012user}}\\
  &=\EE\ \tr \left(e^{\mu\sum_{i=1}^k (V_i-\EE V_i)/k}\right)\\
  &\leq D \exp\left( \sum_{i=1}^k \left\|\log \EE e^{\mu(V_i-\EE V_i)/k}\right\|\right) \quad \mbox{by (3.7.1) of \cite{tropp2012user}}\\
  &\leq D\exp\left(\sum_{i=1}^k  \left\|\frac{\mu^2/2}{k^2(1-R\mu/(3k))}\EE (V_i-\EE V_i)^2\right\|\right) \quad \mbox{by Lemma 6.5.1 of \cite{tropp2012user}} \\
  &= D\exp\left( \sum_{i=1}^k \frac{\mu^2/2}{k^2(1-R\mu/(3k))}\left\|\EE (V_i-\EE V_i)^2\right\|\right)\\
  &\leq D\exp\left( \frac{k\sigma_W^2\mu^2/2}{k^2(1-R\mu/(3k))}\right)\\
  &=D\exp\left( \frac{\sigma_W^2\mu^2/2}{k(1-R\mu/(3k))}\right),
\end{align*}
 where $\tr(\cdot)$ denotes the trace of its argument. % which is a Hermitian matrix.
Combining this bound with (\ref{eq.PU}) gives rise to
\begin{align}
  \PP(\|U_n-\EE U_n\|\geq t)\leq De^{-\mu t}\exp\left( \frac{\sigma_W^2\mu^2/2}{k(1-R\mu/(3k))}\right).
\end{align}
Setting $\mu=\frac{tk}{\sigma_W^2+Rt/3}$ yields
\begin{align*}
  \PP(\|U_n-\EE U_n\|\geq t)&\leq De^{-\frac{kt^2/2}{\sigma_W^2+Rt/3}}\\
  &= D\exp\left(-\left\lfloor \frac{n}{m}\right\rfloor\frac{t^2/2}{\sigma_W^2+Rt/3}\right) \\
  &\leq D\exp\left(-\frac{nt^2/4}{m(\sigma_W^2+Rt/3)}\right).
\end{align*}
\end{proof}
Theorem \ref{thm.HDisVf} is a direct result of Lemma \ref{lem.BernsteinU} by taking $W=\tH(\alpha)$  with $m=2, R=4B^2$ and $ \sigma_W^2=16B^4$.
%\subsection{Proof of Theorem \ref{thm.HDisVf}}
%\label{sec.mainproof.projDisVf}
%\begin{proof}[Proof of Theorem \ref{thm.HDisVf}]
%  Let $R$ and $\sigma_H$ be defined as in Lemma \ref{lem.BernsteinU}. For $\hH(\halpha)$, we have $R=4B^2$ and $\sigma_S^2=16B^4$. By Lemma \ref{lem.BernsteinU},
%  \begin{align*}
%    \PP(\|\tH(\halpha)-H(\halpha)\|\geq t)\leq D\exp\left(-\frac{3nt^2}{32B^2(12B^2+t)}\right).
%  \end{align*}
%\end{proof}

\subsection{Proof of Theorem \ref{thm.projDisVf}}
\label{sec.mainproof.HDisVf}

To prove Theorem \ref{thm.projDisVf}, we need the following two lemmas on matrix perturbation theory.
\begin{lem}[Davis-Kahan \cite{davis1970rotation,stewart1990matrix}]
	\label{lem.DK}
	Let $W, \widehat W \in \mathbb{C}^{D \times D}$ be two Hermitian matrices with eigenvalues $\lambda_j , \widehat \lambda_j, j=1,\ldots,D$ in non-increasing order. 
	Let the columns of $\Phi$ and $\hPhi$ consist of the eigenvectors associated with $\{\lambda_j\}_{j=D-d+1}^D$ of $W$ and $\{\widehat{\lambda}_j\}_{j=D-d+1}^D$ of $\widehat{W}$, respectively. Suppose $\widehat{\lambda}_{D-d}-\lambda_{D-d+1}>0$.
	Then 
	\begin{equation}
		\|\proj_{\widehat \Phi} - \proj_\Phi \| \le \frac{\|\widehat W - W\|}{\widehat{\lambda}_{D-d}-\lambda_{D-d+1}}.
	\end{equation}

\end{lem}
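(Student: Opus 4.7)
The plan is to reduce the bound on $\|\proj_{\widehat\Phi} - \proj_\Phi\|$ to a Sylvester-equation estimate, exploiting the spectral gap between the top $d$ eigenvalues of $G$ and the trailing $D-d$ eigenvalues of $\widehat G$. I would first fix the spectral decompositions $G = \Phi\Lambda_1\Phi^* + \Phi_\perp\Lambda_2\Phi_\perp^*$ and $\widehat G = \widehat\Phi\widehat\Lambda_1\widehat\Phi^* + \widehat\Phi_\perp\widehat\Lambda_2\widehat\Phi_\perp^*$, where $\Lambda_1 = \mathrm{diag}(\lambda_1,\ldots,\lambda_d)$, $\widehat\Lambda_2 = \mathrm{diag}(\widehat\lambda_{d+1},\ldots,\widehat\lambda_D)$, and $\Phi_\perp,\widehat\Phi_\perp \in \mathbb{C}^{D\times(D-d)}$ are orthonormal complements so that $[\Phi\ \Phi_\perp]$ and $[\widehat\Phi\ \widehat\Phi_\perp]$ are unitary. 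The first key step is the standard identity $\|\proj_{\widehat\Phi} - \proj_\Phi\| = \|\widehat\Phi_\perp^* \Phi\|$, which follows because the singular values of $\widehat\Phi_\perp^*\Phi$ are the sines of the principal angles between $\mathrm{Range}(\Phi)$ and $\mathrm{Range}(\widehat\Phi)$, and the spectral norm of the projection difference equals the largest such sine.

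Next, I would compute the cross-term $\widehat\Phi_\perp^*(\widehat G - G)\Phi$ in two ways. From $\widehat G\,\widehat\Phi_\perp = \widehat\Phi_\perp\widehat\Lambda_2$ we get $\widehat\Phi_\perp^*\widehat G\Phi = \widehat\Lambda_2\widehat\Phi_\perp^*\Phi$, and from $G\Phi = \Phi\Lambda_1$ we get $\widehat\Phi_\perp^* G\Phi = \widehat\Phi_\perp^*\Phi\,\Lambda_1$. Setting $S = \widehat\Phi_\perp^*\Phi$, subtraction yields the Sylvester-type identity
\[
\widehat\Lambda_2\,S - S\,\Lambda_1 \;=\; \widehat\Phi_\perp^*(\widehat G - G)\Phi.
\]
The hypothesis $\widehat\lambda_{d+1} < \lambda_d$ guarantees the spectral separation $\sigma(\widehat\Lambda_2)\subseteq(-\infty,\widehat\lambda_{d+1}]$ and $\sigma(\Lambda_1)\subseteq[\lambda_d,\infty)$, with gap $\delta := \lambda_d - \widehat\lambda_{d+1} > 0$.

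The main technical step is the operator-norm lower bound for the Sylvester map: for Hermitian $A,B$ with $\sigma(A)\subseteq(-\infty,a]$ and $\sigma(B)\subseteq[b,\infty)$ and $b-a>0$,
\[
\|AS - SB\| \;\ge\; (b-a)\,\|S\|.
\]
I would prove this by noting that for any unit vectors $u,v$ with $u^*Sv$ close to $\|S\|$, one can express $u^*(AS-SB)v$ through a Cauchy-type integral $\frac{1}{2\pi i}\oint (\zeta I - A)^{-1}S(\zeta I - B)^{-1}(b-a)\,d\zeta$ along a contour separating the spectra, or (more elementarily) by diagonalizing $A = U D_A U^*$ and $B = V D_B V^*$ and observing that the entries of $U^*(AS-SB)V$ equal $(d_{A,i}-d_{B,j})(U^*SV)_{ij}$, with each scalar factor of absolute value at least $\delta$; the operator-norm inequality then follows from the standard fact that multiplying the entries of a matrix in an orthonormal basis by scalars of modulus $\ge\delta$ cannot reduce the operator norm by more than a factor of $\delta$, which in turn is an easy consequence of the spectral theorem applied to $S^*S$. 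Combining this bound with the Sylvester identity gives
\[
\delta\,\|S\| \;\le\; \|\widehat\Phi_\perp^*(\widehat G - G)\Phi\| \;\le\; \|\widehat\Phi_\perp^*\|\,\|\widehat G - G\|\,\|\Phi\| \;=\; \|\widehat G - G\|,
\]
and the lemma follows upon invoking $\|\proj_{\widehat\Phi} - \proj_\Phi\| = \|S\|$.

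The main obstacle, as anticipated, is establishing the Sylvester lower bound in the spectral norm; the entrywise argument yields the bound immediately in the Frobenius norm, but upgrading to the operator norm requires either the contour-integral representation or a careful reduction via $S^*S$. Everything else — the two spectral decompositions, the two-way computation of $\widehat\Phi_\perp^*(\widehat G - G)\Phi$, and the principal-angle identity — is short and algebraic.
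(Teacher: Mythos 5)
The paper does not prove this lemma at all---it is quoted directly from \cite{davis1970rotation,stewart1990matrix}---so your argument has to stand on its own. Its architecture is exactly the classical one: the principal-angle identity $\|\proj_{\widehat \Phi} - \proj_\Phi\| = \|\widehat\Phi_\perp^*\Phi\|$, the Sylvester identity $\widehat\Lambda_2 S - S\Lambda_1 = \widehat\Phi_\perp^*(\widehat G - G)\Phi$, and a lower bound for the Sylvester map. The first two steps are correct as written. The gap is in the step you yourself flagged as the main obstacle: the ``standard fact'' that multiplying the entries of a matrix (in an orthonormal basis) by scalars of modulus at least $\delta$ cannot shrink its operator norm below $\delta$ times the original value is \emph{false}. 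Take $S = \frac{1}{n}J$ with $J$ the all-ones $n\times n$ matrix, so $\|S\|=1$, and multiply entrywise by an $n \times n$ Hadamard matrix with entries $\pm 1$: the result $\frac 1 n H$ has operator norm $n^{-1/2}$, far below the value $1$ that the claimed fact would force. So a modulus lower bound on the multipliers is not enough (and ``the spectral theorem applied to $S^*S$'' will not rescue it); the additive structure $a_i - b_j$ of the multipliers is essential.

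The repair is short and gives exactly the constant you need. Set $A = \widehat\Lambda_2$, $B = \Lambda_1$, $T = AS - SB$, $a = \widehat\lambda_{d+1}$, $b = \lambda_d$, and $\delta = b-a>0$. Since $\frac{d}{dt}\bigl(e^{tA}Se^{-tB}\bigr) = e^{tA}(AS-SB)e^{-tB} = e^{tA}Te^{-tB}$ and, for Hermitian matrices, $\|e^{tA}\|\le e^{ta}$ and $\|e^{-tB}\|\le e^{-tb}$ for $t\ge 0$, the boundary term $e^{tA}Se^{-tB}$ has norm at most $e^{-t\delta}\|S\|\to 0$, so
\begin{equation*}
S = -\int_0^\infty e^{tA}\,T\,e^{-tB}\,dt, \qquad \|S\| \le \|T\|\int_0^\infty e^{-t\delta}\,dt = \frac{\|T\|}{\delta}.
\end{equation*}
(Since $A$ and $B$ are diagonal here, this is just the identity $1/(b_j-a_i) = \int_0^\infty e^{-t(b_j-a_i)}dt$, which writes the entrywise multiplier as an average of two-sided diagonal conjugations, each of norm at most $e^{-t\delta}$.) Note that the clean constant $1/\delta$ uses the fact that the two spectra are separated by a point of the real line, which is precisely the hypothesis $\widehat\lambda_{d+1}<\lambda_d$; the contour-integral representation you mention inverts the Sylvester map correctly but does not yield the constant $1/\delta$ without additional care. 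With this lemma established, the remainder of your argument is complete and yields the stated bound.
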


%\begin{lem}[Davis-Kahan \cite{davis1970rotation,stewart1990matrix}]
%	\label{lem.DK}
%	Let $W, \widehat W \in \mathbb{C}^{D \times D}$ be two Hermitian matrices with eigenvalues $\lambda_j , \widehat \lambda_j, j=1,\ldots,D$ in non-increasing order. Let the columns of $\Phi$ and $\widehat \Phi$ consist of orthonormal bases of the eigenspaces associated with the largest $d$ eigenvalues of $W$ and $\widehat W$ respectively. Suppose $\widehat\lambda_{d+1} < \lambda_d$, then
%	\begin{equation}
%		\|\proj_{\widehat \Phi} - \proj_\Phi \| \le \frac{\|\widehat W - W\|}{\lambda_d - \widehat\lambda_{d+1}}.
%	\end{equation}
%	
%\end{lem}

\begin{lem}[Weyl \cite{weyl1912asymptotische}]
	\label{lem.Weyl}
	Let $W, \widehat W \in \mathbb{C}^{D \times D}$ be two Hermitian matrices with eigenvalues $\lambda_j , \widehat \lambda_j, j=1,\ldots,D$ in non-increasing order.
	Then
	\begin{equation}
		|\lambda_j-\widehat \lambda_j|\leq \|\widehat W - W\|, \forall j=1,...,D.
	\end{equation}
\end{lem}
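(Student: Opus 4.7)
The plan is to prove the Weyl eigenvalue perturbation bound directly from the Courant--Fischer (min--max) variational characterization of eigenvalues of Hermitian matrices. Write $E = \widehat G - G$, which is itself Hermitian since both $G$ and $\widehat G$ are, and recall that the spectral norm of a Hermitian matrix equals the maximum of $|v^* E v|$ over unit vectors $v \in \mathbb{C}^D$. Consequently, for every unit vector $v$,
\begin{equation*}
v^* G v - \|E\| \;\le\; v^* \widehat G v \;=\; v^* G v + v^* E v \;\le\; v^* G v + \|E\|.
\end{equation*}

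Next I would invoke Courant--Fischer: for a Hermitian matrix $H \in \mathbb{C}^{D \times D}$ with eigenvalues in non-increasing order, the $j$-th eigenvalue equals
\begin{equation*}
\mu_j(H) \;=\; \max_{\substack{S \subset \mathbb{C}^D \\ \dim S = j}} \;\min_{\substack{v \in S \\ \|v\|=1}} v^* H v.
\end{equation*}
Applying this to $\widehat G$ and using the two-sided bound above on $v^* \widehat G v$, one gets
\begin{equation*}
\widehat\lambda_j \;=\; \max_{\dim S = j}\,\min_{v\in S,\,\|v\|=1} v^*\widehat G v \;\le\; \max_{\dim S = j}\,\min_{v\in S,\,\|v\|=1}\bigl( v^* G v + \|E\| \bigr) \;=\; \lambda_j + \|E\|,
\end{equation*}
and symmetrically $\widehat\lambda_j \ge \lambda_j - \|E\|$. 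Combining yields $|\lambda_j - \widehat\lambda_j| \le \|E\| = \|\widehat G - G\|$ for every $j=1,\ldots,D$.

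There is no real obstacle here; the only care needed is to justify the inequality inside the min--max step, which follows because adding a constant $\|E\|$ to the objective inside the inner minimization merely shifts the minimum by $\|E\|$, and that shift commutes with the outer maximization over $j$-dimensional subspaces. One could equivalently argue by a dimension-counting argument: the eigenspace of $G$ corresponding to $\{\lambda_1,\ldots,\lambda_j\}$ has dimension $j$, the eigenspace of $\widehat G$ for $\{\widehat\lambda_{j+1},\ldots,\widehat\lambda_D\}$ has dimension $D-j$, so their intersection contains a unit vector $v$, on which simultaneously $v^* G v \ge \lambda_j$ and $v^*\widehat G v \le \widehat\lambda_{j+1}$, giving $\lambda_j - \widehat\lambda_{j+1} \le v^* E v \le \|E\|$; repeating on the reversed pair furnishes both directions. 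Either route delivers the claim in a few lines and requires no new constructions beyond the standard variational principle.
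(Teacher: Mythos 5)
The paper does not prove this lemma at all: it is stated as a classical result with a citation to Weyl's 1912 paper, so there is no ``paper proof'' to compare against. Your primary argument via Courant--Fischer is the standard proof and is correct and complete: the two-sided bound $v^*Gv - \|E\| \le v^*\widehat G v \le v^*Gv + \|E\|$ passes cleanly through the inner minimum and outer maximum, yielding $|\lambda_j - \widehat\lambda_j| \le \|E\|$. One caution about the ``equivalent'' dimension-counting aside at the end: as written it does not work. Two subspaces of dimensions $j$ and $D-j$ in $\mathbb{C}^D$ need not intersect nontrivially (they can be complementary), so the claimed unit vector $v$ may not exist; moreover the inequality it would produce, $\lambda_j - \widehat\lambda_{j+1} \le \|E\|$, compares the wrong pair of eigenvalues. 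The correct version of that argument intersects the $G$-eigenspace for $\{\lambda_1,\ldots,\lambda_j\}$ (dimension $j$) with the $\widehat G$-eigenspace for $\{\widehat\lambda_j,\ldots,\widehat\lambda_D\}$ (dimension $D-j+1$), whose dimensions sum to $D+1$ and hence force a common unit vector, giving $\lambda_j - \widehat\lambda_j \le v^*Ev \le \|E\|$ directly. Since the aside is offered only as an alternative, your proof stands on the min--max route alone.
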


\begin{proof}[Proof of Theorem \ref{thm.projDisVf}]
%	A high probability bound on the distance between $\tH(\alpha)$ and $H(\alpha)$ is given in (\ref{eq.HDisVf}).
	
%Since $\EE[\hH(\alpha)]=H(\alpha)$, by Lemma \ref{lem.BernsteinU}, substituting $U_n=\hH(\alpha),m=2, R=4B^2, \sigma_W^2=16B^4$, we have
%\begin{align}
%	\PP\left\{
%	\|\hH(\alpha) - H(\alpha)\|
%	\ge t
%	\right\}\leq De^{-\frac{3nt^2}{32B^2(12B^2+t)}}.
%	\label{eq.Hprob}
%\end{align}

By Proposition \ref{propphi}, when $ \alpha<\alpha_{\rm thresh}$, the eigenvectors associated with the  smallest  $d$ eigenvalues of $H(\alpha)$ span the central subspace. Recall that the column space of $\tPhi$ is the eigenspace associated with the smallest $d$  eigenvalues of $\tH(\alpha)$. To derive the relation between $\proj_{\tPhi}$ and $\proj_{\Phi}$ using Lemma \ref{lem.DK} and Lemma \ref{lem.Weyl}, we need an eigengap of $H(\alpha)$.
%Note that $H(\alpha)$ and $G(\alpha)$ have the same set of eigenvectors, where $G(\alpha)$ is defined in (\ref{eq-gcrcov}).
%The column space of $\tPhi$ is the one spanned by eigenvectors corresponding to the smallest $d$ eigenvalues of $\tH(\alpha)$. The same is true for $\Phi$ and $G(\alpha)$, where $G(\alpha)$ is defined in (\ref{eq-gcrcov}). 
Denote the eigenvalues of $H(\alpha)$ and $G(\alpha)$ by $\{\lambda_j^{(H)}\}_{j=1}^D$ and $\{\lambda_j^{(G)}\}_{j=1}^D$ in non-increasing order, respectively. We have 
$\lambda_j^{(H)}=p_{\alpha}\lambda_j^{(G)}.$
When $\alpha<\alpha_{\rm thresh}$, with Assumption \ref{assum-gcr} and \ref{assumrho}(iii), Proposition \ref{propphi} implies $\lambda_{D-d}^{(G)}-\lambda_{D-d+1}^{(G)} \ge c_0 $, and then 
$$\lambda_{D-d}^{(H)}-\lambda_{D-d+1}^{(H)} \ge p_{\alpha}c_0.$$
Combining Lemma \ref{lem.DK} and Lemma \ref{lem.Weyl} and setting $W=H(\alpha),\widehat W=\tH(\alpha)$ give rise to
\begin{align*}
	\left\|\proj_{\tPhi} -\proj_{\Phi}\right\| &\le  \frac{\|\tH(\alpha) - H(\alpha)\|}{\lambda_{D-d}^{(H)}-\lambda_{D-d+1}^{(H)}- \|\tH(\alpha) - H(\alpha)\|}\\
	&\le  \frac{\|\tH(\alpha) - H(\alpha)\|}{p_{\alpha}c_0 - \|\tH(\alpha) - H(\alpha)\|}.
	%&\le  \frac{\|\hH(\talpha) - H(\talpha)\|}{\PP\left(V_f(\tbx,\bx)\leq C_4\right)c_0 - \|\tH(\talpha) - H(\talpha)\|}.
	%\label{eq.projform}
\end{align*}
%The column space of $\hPhi$ is the one spanned by eigenvectors corresponding to the largest $d$ eigenvalues of $I-\hH(\alpha)$. The same is true for $\Phi$ and $I-G(\alpha)$, where $G(\alpha)$ is defined in (\ref{eq-gcrcov}). Denote the eigenvalues of $H(\alpha)$ and $G(\alpha)$ by $\left\{\lambda_j^{(H)}\right\}_{j=1}^D$ and $\left\{\lambda_j^{(G)}\right\}_{j=1}^D$, respectively. We have 
%$$\lambda_j^{(H)}=p_{\alpha}\lambda_j^{(G)}.$$
%Since $\alpha<\alpha_{\rm thresh}$, Assumption \ref{assum-gcr}, \ref{assumrho}(iii) and Proposition \ref{propphi} imply $\lambda_{D-d}^{(G)}-\lambda_{D-d+1}^{(G)} \ge c_0 $. Thus 
%$$\lambda_{D-d}^{(H)}-\lambda_{D-d+1}^{(H)} \ge p_{\alpha}c_0.$$
%Combining Lemma \ref{lem.DK} and Lemma \ref{lem.Weyl} and setting $W=H(\alpha)$ and $\widehat W=\hH(\alpha)$ give rise to
%\begin{align*}
%	\left\|\proj_{\hPhi} -\proj_{\Phi}\right\| &\le  \frac{\|(I-\hH(\alpha)) - (I-H(\alpha))\|}{(1-\lambda_{D-d+1}^{(H)}) -(1-\lambda_{D-d}^{(H)})- \|\hH(\alpha) - H(\alpha)\|}\\
%	&\le  \frac{\|\hH(\alpha) - H(\alpha)\|}{p_{\alpha}c_0 - \|\hH(\alpha) - H(\alpha)\|}.
%	%&\le  \frac{\|\hH(\talpha) - H(\talpha)\|}{\PP\left(V_f(\tbx,\bx)\leq C_4\right)c_0 - \|\tH(\talpha) - H(\talpha)\|}.
%	%\label{eq.projform}
%\end{align*}
Notice that $\|\proj_{\tPhi} -\proj_{\Phi}\| \le 2$. For any $0 < t < 2$, a sufficient condition to guarantee $\|\proj_{\tPhi} -\proj_{\Phi}\| \le t$ is
$$\|\tH(\alpha) - H(\alpha)\| \le p_{\alpha}c_0t/(1+t)\ \text{ and }\ p_{\alpha}c_0- \|\tH(\alpha) - H(\alpha)\|>0.$$ 
Since $0 < t < 2$, we can express this sufficient condition as
$\|\tH(\alpha) - H(\alpha)\| \le p_{\alpha}c_0t/3.$
Therefore, for any $t \in (0,2)$,
\begin{align}
	\PP\left\{
	\|\proj_{\tPhi} -\proj_{\Phi}\| \ge t
	\right\}\le
	\PP\left\{
	\|\tH(\alpha) - H(\alpha)\|
	\ge
	p_{\alpha}c_0t/3
	\right\}.
	\label{eq.KVfProb}
\end{align}
Combining (\ref{eq.HDisVf}) and (\ref{eq.KVfProb}) gives rise to (\ref{eq.projDisVf}).

%\begin{align*}
%	\PP\left\{
%	\|\proj_{\tPhi} -\proj_{\Phi}\| \ge t
%	\right\}
%	\le D\exp\left(-\frac{3\left(c_{\talpha}c_0\right)^2nt^2}{32B^2(36B^2+c_{\talpha}c_0t)}\right).
%\end{align*}

\end{proof}

%

%Lemma \ref{lemnhat} is proved in Supplementary materials \ref{appen.lemnhat}. %Now we are ready to prove Proposition \ref{thm.GDis}.

\subsection{Proof of Corollary \ref{coroproj}}
\label{sec.mainproof.G3}
\begin{lem}
\label{lem.sqExpectation}
  If $X \ge 0$ is a random variable such that
  $$\PP(X\geq t)\leq Ae^{-\frac{at^2}{b+ct}}, \ \forall t \ge 0$$
  for some $a,b,c>0$, then
  \begin{align*}
    \EE(X^2)%=\int_0^{+\infty} tAe^{-\frac{at^2}{b+ct}}dt
    \leq\frac{2b}{a}(\log(A)+ A+1)+\frac{8Ac^2}{a^2}.
  \end{align*}

\end{lem}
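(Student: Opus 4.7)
\textbf{Proof proposal for Lemma \ref{lem.sqExpectation}.}
The plan is to start from the layer-cake identity
\begin{equation*}
\EE(X^2) = \int_0^\infty 2t\,\PP(X\ge t)\,dt,
\end{equation*}
valid for any nonnegative random variable, and then to feed in the two available bounds on the tail, namely the hypothesis $\PP(X\ge t)\le A\exp(-at^2/(b+ct))$ and the trivial $\PP(X\ge t)\le 1$.

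The main analytic observation is the elementary inequality
\begin{equation*}
\frac{at^2}{b+ct} \;\ge\; \min\!\left(\frac{at^2}{2b},\ \frac{at}{2c}\right),
\end{equation*}
which holds because $b+ct\le 2b$ when $ct\le b$ and $b+ct\le 2ct$ when $ct\ge b$. Exponentiating gives the pointwise decomposition
\begin{equation*}
e^{-at^2/(b+ct)} \;\le\; e^{-at^2/(2b)} + e^{-at/(2c)},
\end{equation*}
which cleanly separates the sub-Gaussian and sub-exponential regimes built into the Bernstein-type tail.

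With this in hand, I would split $[0,\infty)$ at a threshold $t_0$: on $[0,t_0]$ bound the tail by $1$ (contributing exactly $t_0^2$), and on $[t_0,\infty)$ use the exponential bound followed by the decomposition above. Two standard one-dimensional integrals appear,
\begin{equation*}
\int_{t_0}^\infty 2t\,e^{-at^2/(2b)}\,dt = \frac{2b}{a}\,e^{-at_0^2/(2b)},\qquad \int_0^\infty 2t\,e^{-at/(2c)}\,dt = \frac{8c^2}{a^2},
\end{equation*}
evaluated by direct antidifferentiation. Choosing $t_0 = \sqrt{2b\log(A)/a}$ (when $A\ge 1$; otherwise $t_0=0$) balances $t_0^2$ against $A\cdot (2b/a)\,e^{-at_0^2/(2b)}$, producing the combination $\tfrac{b}{a}(\log A+1)$ and, after accommodating the small-$A$ regime, the $\tfrac{bA}{a}$ summand that appears in the stated bound. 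The sub-exponential integral contributes the $Ac^2/a^2$ term. Collecting everything yields an expression of the form $\tfrac{b}{a}(\log A + A + 1)+\tfrac{4Ac^2}{a^2}$, as claimed.

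I expect the only real obstacle to be bookkeeping of constants: one has to handle $A\ge 1$ and $A<1$ separately (so that $t_0$ is well-defined and the trivial bound $\PP\le 1$ is actually used in the regime where the exponential bound would otherwise exceed $1$), and then to merge the two regimes so that the coefficient $\log A + A + 1$ simultaneously dominates $2(\log A + 1)$ for $A\ge 1$ and $2A$ for $A\le 1$. No deeper tools are required; the argument is purely the layer-cake formula combined with the Bernstein-style splitting of the exponent.
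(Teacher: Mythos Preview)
Your proposal is correct and follows essentially the same route as the paper: layer-cake formula, the trivial bound $\PP\le 1$ on $[0,t_0]$ with $t_0^2=(2b/a)\log A$, and the split $b+ct\le 2\max(b,ct)$ to separate the sub-Gaussian and sub-exponential regimes; the paper merely writes the latter as an explicit three-piece partition at $t_1=b/c$ instead of your pointwise inequality $e^{-\min(u,v)}\le e^{-u}+e^{-v}$. One bookkeeping caveat: the paper's layer-cake integral is written without the factor of $2$, so with your (correct) $\int_0^\infty 2t\,\PP(X\ge t)\,dt$ the sub-exponential piece comes out as $8Ac^2/a^2$ rather than $4Ac^2/a^2$---this is immaterial for the application but explains why your constants won't line up exactly with the displayed ones.
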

Lemma \ref{lem.sqExpectation} is proved in Supplementary materials \ref{appen.lem.sqExpectation}. Corollary \ref{coroproj} is proved based on Lemma \ref{lem.sqExpectation}.

\begin{proof}[\bf Proof of Corollary \ref{coroproj}]
  Combining Theorem  \ref{thm.projDisVf} and Lemma \ref{lem.sqExpectation} gives rise to
  \begin{align*}
    \EE\|\proj_{\tPhi} -\proj_{\Phi}\|^2&=\int_0^{+\infty} 2t \PP(\|\proj_{\tPhi} -\proj_{\Phi}\|\geq t)dt\\
    %&=\int_0^{+\infty}   2tD\exp\left(-\frac{p_{\alpha}^2c_0^2nt^2}{32B^2(36B^2+p_{\alpha}c_0t)}\right) dt \\
    &\leq \int_0^{+\infty}   2t\min\left\{1,D\exp\left(-\frac{p_{\alpha}^2c_0^2nt^2}{32B^2(36B^2+p_{\alpha}c_0t)}\right)\right\}dt \\
     &\leq
     \frac{C_2}{C_1}(\log D+D+1)n^{-1} + \frac{8DC_3^2}{C_1^2}n^{-2} \quad {\mbox{ by Lemma \ref{lem.sqExpectation}}}\\
     &\leq C_4/n,
  \end{align*}
  where $C_1,C_2,C_3$ and $C_4$ are given in (\ref{eq.C1-4}).

\end{proof}

\subsection{Proof of Theorem \ref{thm.fError}}
\label{sec.mainproof.f}
In the regression model (\ref{eqmodelg}), if we condition on the recovered central subspace $\tPhi$, the samples $\{\bz_i\}_{i=n+1}^{2n}$ are independently sampled from the marginal distribution of $\rho$ on the subspace spanned by $\tPhi$. Denote this marginal distribution by $\mu$. For any set $\Omega \subset \RR^d,$ $\mu(\Omega)=\rho (\{\bx\in \mathds{R}^D|\tPhi^\top\bx\in\Omega\})$. According to (\ref{eq.squarf}), the regression error of $f$ crucially depends on the estimation error of $g$, which is established in the following lemma.

\begin{lem}
\label{thm.polyConverge}
Let $\{\bz_i\}_{i=n+1}^{2n}$ be i.i.d. samples from a probability measure $\mu$ such that $\mathrm{supp}(\mu)\subseteq[-B,B]^d$ and $\{y_i\}_{i=n+1}^{2n}$ follows the model in (\ref{eqmodelg}). Let $\tg$ be the estimator in (\ref{eq.gh0}) with polynomial order $k=\lceil s\rceil-1$. Under Assumption \ref{assumrho}(i), \ref{assumg}, \ref{assumxi}, we have
  \begin{align}
    &\EE \int|\tg(\bz)-g(\bz)|^2\mu(d\bz)\nonumber\\
    &\leq 2c\cdot \max(\sigma^2  +\bnoise,2M^2+2\bnoise) \frac{ \binom{d+k}{d}K^d\log n}{n} +\frac{4C_g^2B^{2s}d^{s+k}}{(k!)^2K^{2s}} + 6\bnoise.
  \end{align}
  where $c$ is a universal constant and $\bnoise$ is defined in (\ref{eq.bnoise}).
\end{lem}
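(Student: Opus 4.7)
The plan is to view the estimation of $g$ from $\{(\bz_i,y_i)\}_{i=n+1}^{2n}$ as a standard nonparametric regression problem, but with a twist: the ``noise'' $\widetilde{\xi}_i$ in (\ref{eq.txi}) is not mean-zero conditional on $\bz_i$, due to the mismatch between $\hPhi$ and $\Phi$. The key is to introduce the true conditional mean
\[
g^*(\bz) := \EE[y\mid\hPhi^T\bx = \bz] = g(\bz) + \eta(\bz),
\]
so that $y_i = g^*(\bz_i) + \widetilde{\xi}_i'$ with $\widetilde{\xi}_i':=\widetilde{\xi}_i-\eta(\bz_i)$ mean-zero conditional on $\bz_i$. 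By the definition in (\ref{eq.bnoise}) we have $\|\eta\|_\infty^2\le \bnoise$ and hence $\|g^*\|_\infty^2 \le 2M^2 + 2\bnoise$, which already identifies one of the two quantities appearing in the $\max$.

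First, I would control the size and variance of the effective noise. Since $|g(\Phi^T\bx_i)-g(\hPhi^T\bx_i)|\le L_g B\|\hPhi-\Phi\|=\sqrt{\bnoise}$ and $|\xi_i|\le\sigma$, both $\widetilde\xi_i$ and $\widetilde\xi_i'$ are bounded, and $\var(\widetilde\xi_i')\le\sigma^2+\bnoise$. This is the other term in the $\max$, and it plays the role of the effective noise variance in any concentration argument.

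Second, I would invoke a standard convergence theorem for truncated piecewise polynomial partitioning estimates on $[-B,B]^d$ (e.g.\ the variant of Theorem 11.3/19.3 in Gy\"orfi--Kohler--Krzy\.zak--Walk), applied to the regression model $y_i=g^*(\bz_i)+\widetilde\xi_i'$. Because $\mathcal{F}_k$ on $K^d$ cubes has vector space dimension $K^d\binom{d+k}{d}$ and the response is bounded, this yields
\[
\EE\|\hg-g^*\|_{L^2(\mu)}^2 \;\le\; c\cdot \max(\sigma^2+\bnoise,\,2M^2+2\bnoise)\,\frac{\binom{d+k}{d}K^d\log n}{n} \;+\; c'\inf_{h\in\mathcal{F}_k}\|h-g^*\|_{L^2(\mu)}^2,
\]
where the $\max$ reflects both the variance of $\widetilde\xi'$ and the worst-case envelope of $|y_i-T_Mh(\bz_i)|$ produced by the truncation at $M$.

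Third, I would bound the deterministic approximation error. Since $g$ is $(s,C_g)$-smooth with $s=k+\beta$ and $k=\lfloor s+1\rfloor-1$, a local Taylor expansion on each sub-cube of side $2B/K$ produces a piecewise polynomial $h_0\in\mathcal{F}_k$ with $\|h_0-g\|_\infty^2\le \frac{C_g^2 B^{2s}d^{s+k}}{(k!)^2 K^{2s}}$ (up to the factor shown in the statement). Using $\|h_0-g^*\|^2\le 2\|h_0-g\|_\infty^2+2\|\eta\|_\infty^2$ converts this into an approximation bound for $g^*$ at a cost of an additive $O(\bnoise)$. Finally, assembling via the triangle inequality $\|\hg-g\|_\mu^2\le 2\|\hg-g^*\|_\mu^2+2\|\eta\|_\mu^2 \le 2\|\hg-g^*\|_\mu^2 + 2\bnoise$ and collecting the bias contributions (one from the triangle step, the rest from the approximation step) delivers the final $+\,6\bnoise$ term and the stated bound.

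The main obstacle is the third step together with making sure the right variance/envelope quantity appears in the stochastic term; in particular, a direct citation of Gy\"orfi et al.\ requires verifying that their hypotheses are met with the precise form $\max(\sigma^2+\bnoise,2M^2+2\bnoise)$ rather than a cruder $\sigma^2+M^2$. The rest is essentially bookkeeping: carrying the $\bnoise$ terms through the triangle inequalities and through the standard polynomial approximation estimate, and verifying that the constant $c$ in the stochastic term can indeed be absorbed into the ``universal constant'' $c$ appearing in the lemma.
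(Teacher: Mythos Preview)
Your proposal is correct and follows essentially the same route as the paper: recenter the regression around $g^*:=g+\eta$, apply the truncated piecewise-polynomial estimate bound of Gy\"orfi et al.\ (their Theorem~11.3) with variance proxy $\sigma^2+\bnoise$ and envelope $2M^2+2\bnoise$, control $\inf_{h\in\mathcal F_k}\|h-g^*\|^2$ by a local Taylor approximation of $g$ plus $2\bnoise$, and finish with the triangle inequality $\|\hg-g\|^2\le 2\|\hg-g^*\|^2+2\|\eta\|^2$. The paper's constants in the approximation step come from its Lemma~\ref{lem.taylor} and the factor $8$ in Lemma~\ref{prop.nonparametric}, which together with your two factors of $2$ deliver exactly the stated coefficients.
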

Lemma \ref{thm.polyConverge} is proved in Supplementary materials \ref{appen.lem.polyConverge} with classical results in nonparametric regression \cite{gyorfi2006distribution,pollard1984convergence,geer2000empirical,cox1988approximation}. %In (), we need the bound of $\|\proj_{\hPhi}-\proj_{\Phi}\|$ which is given by Lemma \ref{lem.canonicalVector}
The proof of Theorem \ref{thm.fError} is given below.
\begin{proof}[\bf Proof of Theorem \ref{thm.fError}]
From \eqref{phiproj}, (\ref{eq.squarf}) and the fact that $\|\bx\|\leq B$, we have 
\begin{eqnarray}
  |\tf(\bx)-f(\bx) |^2 &\leq& 4C_g^2 B^2\|\proj_{\tPhi} -\proj_{\Phi}\|^2 +2|\tg(\tPhi^\top\bx)-g(\tPhi^\top \bx)|^2.
   \nonumber
\end{eqnarray}
We use $\EE_{\{(\bx_i,y_i)\}_{i=1}^{2n}}(\cdot )$ to denote the expectation with respect to the joint distribution of $\{(\bx_i,y_i)\}_{i=1}^{2n}$. %Since $\hPhi$ (resp. $\hg$) is estimated using $\{(\bx_i,y_i)\}_{i=1}^{n}$ (resp. $\{(\bx_i,y_i)\}_{i=n+1}^{2n}$), $\EE_{\{(\bx_i,y_i)\}_{i=1}^{n}}$ (resp. $\EE_{\{(\bx_i,y_i)\}_{i=n+1}^{2n}}$) represents expectation of $\hPhi$ (resp. $\hg$). Define $\mu(\Omega)=\rho\left(\{\bx\in \mathds{R}^D|\hPhi^\top\bx\in\Omega\}\right)$.
The regression error of $f$ can be expressed as
  \begin{align}
  &\underset{\{(\bx_i,y_i)\}_{i=1}^{2n}}{\EE}\left(\|\tf(\bx)-f(\bx) \|_{L^2(\rho)}^2\right) \nonumber
  \\=& \underset{\{(\bx_i,y_i)\}_{i=1}^{2n}}{\EE} \left(\int |\tf(\bx)-f(\bx) |^2 \rho(d\bx)\right) \nonumber\\
  %&&\leq 2\underset{\{(\bx_i,y_i)\}_{i=1}^{n}}{\EE}\underset{\{(\bx_i,y_i)\}_{i=n+1}^{2n}}{\EE}\left(\int|\hg(\hPhi^\top\bx)-g(\hPhi^\top \bx)|^2\rho (d\bx)\right)+2L_g^2 B^2\underset{\{(\bx_i,y_i)\}_{i=1}^{n}}{\EE}\left(\int \|\proj_{\hPhi} -\proj_{\Phi}\|^2\rho(\bx)\right)\\
  \leq& 4C_g^2 B^2\underset{\{(\bx_i,y_i)\}_{i=1}^{n}}{\EE}(\|\proj_{\tPhi} -\proj_{\Phi}\|^2) \nonumber\\
  &\quad +2\underset{\{(\bx_i,y_i)\}_{i=1}^{n}}{\EE}\left(\underset{\{(\bx_i,y_i)\}_{i=n+1}^{2n}}{\EE} \left(\int|\tg(\tPhi^\top\bx)-g(\tPhi^\top \bx)|^2\rho (d\bx)\right)\Bigg| \{(\bx_i,y_i)\}_{i=1}^n\right).
    \label{eq.squarf.1}
\end{align}
Corollary \ref{coroproj} gives an estimate of the first term:
\begin{align}
  \underset{\{(\bx_i,y_i)\}_{i=1}^{n}}{\EE}(\|\proj_{\tPhi} -\proj_{\Phi}\|^2)&\leq \frac{C_4}{n}.
  \label{eq.regTerm.1}
\end{align}
The second term in (\ref{eq.squarf.1}) can be estimated through Lemma \ref{thm.polyConverge} as follows:
% with $\widetilde{\sigma}^2=\sup_{i=1,...,n} \var(\widetilde{\xi}_i)$ which depends on $\{(\bx_i,y_i)\}_{i=1}^n$.
%We use Lemma \ref{thm.polyConverge} to estimate the second term in (\ref{eq.squarf.1}) which requires $\widetilde{\sigma}^2=\sup_{i=1,...,n} \var(\widetilde{\xi}_i)$ depending on $\{(\bx_i,y_i)\}_{i=1}^n$.
%By (\ref{eq.tildSigmaBound}), we have
%$$
%\underset{\{(\bx_i,y_i)\}_{i=1}^{n}}{\EE}\widetilde{\sigma}^2 \leq 2dL_g^2 B^2\|\proj_{\hPhi} -\proj_{\Phi}\|^2+ \sigma^2 \leq 2dC_7L_g^2 B^2n^{-1}+\sigma^2.
%$$
%If $K$ is chosen according to (\ref{eq.K}), Lemma \ref{thm.polyConverge} gives the following estimate of the second term in (\ref{eq.squarf.1}) as
%Since $\|\bx\|<B$, $\hPhi^\top\bx_i\in [-B,B]^d,i=n+1,...,2n$. The second term represents the regression error of $g$ which can be estimated from Lemma \ref{thm.polyConverge} with $K$ chosen as in (\ref{eq.K}) as follows
\begin{align}
  &\underset{\{(\bx_i,y_i)\}_{i=1}^{n}}{\EE}\left(\underset{\{(\bx_i,y_i)\}_{i=n+1}^{2n}}{\EE}\left(\int|\tg(\tPhi^\top\bx)-g(\tPhi^\top \bx)|^2\rho (d\bx)\right)\Bigg| \{(\bx_i,y_i)\}_{i=1}^n\right)\nonumber\\
 = &\underset{\{(\bx_i,y_i)\}_{i=1}^{n}}{\EE}\left(\underset{\{(\bz_i,y_i)\}_{i=n+1}^{2n}}{\EE}\left(\int|\tg(\bz)-g(\bz)|^2\mu (d\bz)\right)\Bigg| \{(\bx_i,y_i)\}_{i=1}^n\right)\nonumber\\
  \le& \underset{\{(\bx_i,y_i)\}_{i=1}^{n}}{\EE} \Bigg[2c\cdot \max(\sigma^2  +\bnoise,2M^2+2\bnoise) \frac{ \binom{d+k}{d}K^d\log n}{n} \nonumber\\
  &\hspace{1cm}+\frac{4C_g^2B^{2s}d^{s+k}}{(k!)^2K^{2s}}+ 6\bnoise \Bigg| \{(\bx_i,y_i)\}_{i=1}^n\Bigg]\nonumber\\
  \le& 2c\cdot \left(\sigma^2  +2M^2+3\underset{\{(\bx_i,y_i)\}_{i=1}^{n}}{\EE}\bnoise\right) \frac{ \binom{d+k}{d}K^d\log n}{n}  \nonumber
  \\ &\hspace{1cm}+\frac{4C_g^2B^{2s}d^{s+k}}{(k!)^2K^{2s}}+ 6\underset{\{(\bx_i,y_i)\}_{i=1}^{n}}{\EE} \bnoise  \nonumber\\
  \le& C\left( \frac{(\sigma^2  +2M^2+ 6C_5n^{-1})\log n}{n}\right)^{\frac{2s}{2s+d}}+\frac{12C_5}{n},
  \label{eq.regTerm.2.1}
\end{align}
where $C_5$ is defined in Theorem \ref{thm.fError} and $C=2c\binom{d+k}{d}+\frac{4C_g^2B^{2s}d^{s+k}}{(k!)^2}$ independent of $n$. The last inequality in (\ref{eq.regTerm.2.1}) results from 
$$\underset{\{(\bx_i,y_i)\}_{i=1}^{n}}{\EE} \bnoise\leq 2C_4C^2_gB^2n^{-1}$$ 
by (\ref{phiproj}), Corollary \ref{coroproj} and $K$ chosen according to (\ref{eq.K}). 
%From Lemma \ref{thm.polyConverge} and Corollary \ref{coroproj}, one has $\underset{\{(\bx_i,y_i)\}_{i=1}^{n}}{\EE} \bnoise\leq 2dC_7L^2_gB^2n^{-1}$. Choosing $K$ according to (\ref{eq.K}), (\ref{eq.regTerm.2.1}) gives
%\begin{align}
%  &\underset{\{(\bx_i,y_i)\}_{i=1}^{n}}{\EE}\left(\underset{\{(\bx_i,y_i)\}_{i=n+1}^{2n}}{\EE}\left(\int|\hg(\hPhi^\top\bx)-g(\hPhi^\top \bx)|^2\rho (d\bx)\right)\Bigg| \{(\bx_i,y_i)\}_{i=1}^n\right)\nonumber\\
%  &\le C\left( \frac{\max(\sigma^2  +2C_8n^{-1},2M^2+ 4C_8n^{-1})\log n}{n}\right)^{\frac{2s}{2s+d}}+12dC_7L^2_gB^2n^{-1},
%  \label{eq.regTerm.2.2}
%\end{align}
%where $C=2c\binom{d+k}{d}+\frac{4C_g^2B^{2s}d^{s+k}}{(k!)^2}$ independent of $n$.
Finally, combining (\ref{eq.regTerm.1}) and (\ref{eq.regTerm.2.1}) gives rise to \eqref{thmfeq}.

\commentout{
\begin{align*}
  &\underset{\{(\bx_i,y_i)\}_{i=1}^{2n}}{\EE}\|\hf(\bx)-f(\bx) \|_{L^2(\rho)}^2\leq 16C_8n^{-1} + 2C\left(\frac{\max(\sigma^2  +2C_8n^{-1},2M^2+ 4C_8n^{-1})\log n}{n}\right)^{\frac{2s}{2s+d}}.
\end{align*}}
\end{proof}

\section{Conclusion}
\label{sec.conclusion}
In this paper, we combine GCR and piecewise polynomial approximations to estimate a high-dimensional function which varies along a low-dimensional central subspace.
We prove that, the mean squared estimation error for the central subspace is $O(n^{-1})$  and the mean squared regression error is $O\left(\left(n/\log n\right)^{-\frac{2s}{2s+d}}\right)$. A modified GCR algorithm with improved efficiency is also proposed. Numerical experiments verify our theories and demonstrate that the modified GCR has the same accuracy as that in our theoretical results.
%These results are verified by numerical experiments.

%In this work, we combine GCR and piecewise polynomial approximations to estimate a high-dimensional function which varies along a low-dimensional central subspace.
%When the sample size $n$ is sufficiently large and the noise level is sufficiently low, the mean squared estimation error for the central subspace is $O(n^{-1})$  and the mean squared regression error is $O(\left(n/\log n\right)^{-\frac{2s}{2s+d}})$.
%These results are verified by numerical experiments.

\section*{Acknowledgement}
Wenjing Liao's research is supported by NSF DMS 1818751 and DMS 2012652. Both authors are grateful to Alessandro Lanteri, Mauro Maggioni and Stefano Vigogna for pointing out the noise bias in the regression model \eqref{eqmodelg}, which was not taken care of in the first version of this manuscript. This problem has been fixed in the current version.

\bibliographystyle{abbrv}
\bibliography{reference}

\newpage
%\pagebreak
%\widetext
\begin{center}
\textbf{\large Supplemental Materials for Learning functions varying along a central subspace}
\end{center}

%\setcounter{equation}{0}
%\setcounter{figure}{0}
%\setcounter{table}{0}
%\setcounter{section}{0}

%\setcounter{page}{1}
%\makeatletter

%\renewcommand{\theequation}{S.\arabic{equation}}
%\renewcommand{\thefigure}{S.\arabic{figure}}
%\renewcommand\thesection{\Alph{section}}
\numberwithin{equation}{section}
\renewcommand{\theequation}{\thesection.\arabic{equation}}

\appendix
%!TEX root = main.tex

\section{Proof of Proposition \ref{propphi}}
\label{sec.proof.gcr}
\begin{proof}[\bf Proof of Proposition \ref{propphi}]%We prove the case when $\EE\bx=\mathbf{0}$. For the general cases, one can first centralize $\bx$ and then prove with exactly the same steps. \textcolor{red}{Do we need to assume $\EE\bx = 0$ and the covariance matrix is $I$? Also, we need to mention normalization somewhere in the paper.}

By expressing $\tbx -\bx = \proj_{\calS_\Phi}(\tbx -\bx) +  \proj_{\calS_\Phi^\perp}(\tbx -\bx)$, we can write the covariance matrix as
\begin{align*}
&\EE\left[(\tbx -\bx)(\tbx -\bx)^\top\right]
\\
 =&\EE \left[\proj_{\calS_\Phi}(\tbx -\bx) \left[ \proj_{\calS_\Phi}(\tbx -\bx) \right]^\top\right]
+ \EE \left[\proj_{\calS_\Phi^\perp}(\tbx -\bx) \left[ \proj_{\calS_\Phi^\perp}(\tbx -\bx) \right]^\top\right]\\
&\hspace{0.5cm} +\EE \left[\proj_{\calS_\Phi}(\tbx -\bx) \left[ \proj_{\calS_\Phi^\perp}(\tbx -\bx) \right]^\top\right]
+ \EE \left[\proj_{\calS_\Phi^\perp}(\tbx -\bx) \left[ \proj_{\calS_\Phi}(\tbx -\bx) \right]^\top\right]\\
 =& \Phi \EE \left[\Phi^\top (\tbx-\bx) (\tbx-\bx)^\top \Phi\right] \Phi^\top +  \Psi\EE\left[ \Psi^\top (\tbx-\bx) (\tbx-\bx)^\top \Psi\right] \Psi^\top\\
&\hspace{0.5cm} +\Phi \EE \left[\Phi^\top (\tbx-\bx) (\tbx-\bx)^\top \Psi\right] \Psi^\top + \Psi \EE \left[\Psi^\top (\tbx-\bx) (\tbx-\bx)^\top \Phi\right] \Phi^\top
\end{align*}
where the columns of $\Psi$ form an orthonormal basis of $\calS_\Phi^\perp$.

We next prove $  \EE  \left[ \Psi^\top (\tbx-\bx) (\tbx-\bx)^\top \Phi | V_f(\tbx,\bx) \le \alpha \right]$ is zero. Note that $\bx=\proj_{\calS_\Phi}\bx+\proj_{\calS_\Phi^{\perp}}\bx$. Define the reflection operator
$$R_{\Phi}(\bx)=\proj_{\calS_\Phi}\bx-\proj_{\calS_\Phi^{\perp}}\bx$$
which reflects $\bx$ with respect to the central subspace $\Phi$. %and denote $\bz=Q(\bx), \tbz=Q(\tbx)$. 
The operator $R_{\Phi}$ is invertible and $R_{\Phi}^{-1}=R_{\Phi}$.  We also have 
$$\Psi^\top (R_{\Phi}(\tbx)-R_{\Phi}(\bx)) (R_{\Phi}(\tbx)-R_{\Phi}(\bx))^\top \Phi=-\Psi^\top (\tbx-\bx) (\tbx-\bx)^\top \Phi.$$
%By Definition \ref{def.elliptic}, $\Phi^\top\bx$ is independent to $\Psi^\top\bx$ and therefore $\proj_{\calS_\Phi}\bx$ is independent to $\proj_{\calS_\Phi^{\perp}}\bx$. 
For any set $\Omega\subset \supp(\rho)$, we define the reflected set $R_{\Phi}(\Omega)=\{R_{\Phi}(\bx)|\bx\in \Omega\}$.
Since $\bx$ has a spherical distribution, we have, for any set $\Omega\subset \supp(\rho)$,
%along any direction in $\calS_{\Phi}^{\perp}$,
\begin{align}
  \PP(\bx\in \Omega)=\PP(\bx\in R_{\Phi}(\Omega))=\PP(R_{\Phi}^{-1}(\bx)\in \Omega)=\PP(R_{\Phi}(\bx)\in \Omega).
\end{align}
%Denote the probability density of $\proj_{\calS_\Phi}\bx$ and $\proj_{\calS_\Phi^{\perp}}\bx$ by $\rho_{\Phi}$ and $\rho_{\Phi^{\perp}}$, respectively. Then $\rho(\bx)=\rho_{\Phi}(\proj_{\calS_\Phi}\bx)\rho_{\Phi^{\perp}}(\proj_{\calS_\Phi^{\perp}}\bx)$. Since $\bx$ has elliptic distribution along any direction ${\bf w}\in \calS_\Phi^\perp$, $\bw^\top\bx$ and $-\bw^\top\bx$ have the same distribution. Thus $\rho_{\Phi^{\perp}}(\proj_{\calS_\Phi^{\perp}}\bx)=\rho_{\Phi^{\perp}}(-\proj_{\calS_\Phi^{\perp}}\bx)$ and
%\begin{align*}
%  \rho(\bx)=\rho_{\Phi}(\proj_{\calS_\Phi}\bx)\rho_{\Phi^{\perp}}(\proj_{\calS_\Phi^{\perp}}\bx)= \rho_{\Phi}(\proj_{\calS_\Phi}\bx)\rho_{\Phi^{\perp}}(-\proj_{\calS_\Phi^{\perp}}\bx)=\rho(Q(\bx))=\rho(\bz).
%\end{align*}
 Therefore $R_{\Phi}(\bx)$ and $\bx$ have the same distribution, which implies that   $\Psi^\top (R_{\Phi}(\tbx)-R_{\Phi}(\bx)) (R_{\Phi}(\tbx)-R_{\Phi}(\bx))^\top \Phi$ and $\Psi^\top (\tbx-\bx) (\tbx-\bx)^\top \Phi$ have the same distribution. 
 
 We next show $V_f(\tbx,\bx)=V_f(R_{\Phi}(\tbx),R_{\Phi}(\bx))$. It is straightforward that $f(\bx)=g(\Phi^\top\bx)=g(\Phi^\top R_{\Phi}(\bx))=f(R_{\Phi}(\bx))$.
 Since $R_{\Phi}(\bx)$ and $\bx$ have the same distribution, $(\bx,f(\bx))$ and $(R_{\Phi}(\bx),f(\bx))$ have the same distribution. Thus
 \begin{align*}
   V_f(\bx_1,\bx_2)&=\var\{f(\bx)|\bx=t\bx_1+(1-t)\bx_2 \mbox{ for } 0\leq t\leq 1\}\\
   &=\var\{f(\bx)|R_{\Phi}(\bx)=t\bx_1+(1-t)\bx_2 \mbox{ for } 0\leq t\leq 1\}\\
   &=\var\{f(\bx)|\bx=R_{\Phi}^{-1}(t\bx_1+(1-t)\bx_2) \mbox{ for } 0\leq t\leq 1\}\\
   &=\var\{f(\bx)|\bx=tR_{\Phi}(\bx_1)+(1-t)R_{\Phi}(\bx_2) \mbox{ for } 0\leq t\leq 1\}\\
   &=V_f(R_{\Phi}(\bx_1),R_{\Phi}(\bx_2)).
 \end{align*}
% which implies $V_f(\tbx,\bx)=V_f(\tbz,\bz)$.
%As a result, $\left\{\Psi^\top (\tbx-\bx) (\tbx-\bx)^\top \Phi|V_f(\tbx,\bx)\leq \alpha\right\}$ and $\left\{\Psi^\top (R_{\Phi}(\tbx)-R_{\Phi}(\bx)) (R_{\Phi}(\tbx)-R_{\Phi}(\bx))^\top \Phi|V_f(R_{\Phi}(\tbx),R_{\Phi}(\bx))\leq \alpha\right\}$  having the same distribution. 
Overall, the distributions of $(\bx,f(\bx),\tbx,f(\tbx))$ and $(R_{\Phi}(\bx),f(\bx),R_{\Phi}(\tbx),f(R_{\Phi}(\tbx)))$ are the same.
Therefore, we have
\begin{align*}
&\EE\left[\Psi^\top (\tbx-\bx) (\tbx-\bx)^\top \Phi|V_f(\tbx,\bx)\leq \alpha\right]\\
=&\EE\left[ \Psi^\top (R_{\Phi}(\tbx)-R_{\Phi}(\bx)) (R_{\Phi}(\tbx)-R_{\Phi}(\bx))^\top \Phi|V_f(R_{\Phi}(\tbx),R_{\Phi}(\bx))\leq \alpha\right]\\
=&-\EE\left[\Psi^\top (\tbx-\bx) (\tbx-\bx)^\top \Phi|V_f(\tbx,\bx)\leq \alpha\right],
\end{align*}
which implies that 
$$\EE\left[\Psi^\top (\tbx-\bx) (\tbx-\bx)^\top \Phi|V_f(\tbx,\bx)\leq \alpha\right]=\mathbf{0}.$$
Similarly, we can show that
$$
  \EE [\Phi^\top (\tbx-\bx) (\tbx-\bx)^\top \Psi | V_f(\tbx,\bx) \le \alpha]={\bf 0}.
$$

Let
\begin{align*}
G_1(\alpha) &= \EE\left[ \Phi^\top (\tbx-\bx) (\tbx-\bx)^\top \Phi | V_f(\tbx,\bx) \le \alpha \right] \in \RR^{d \times d}
\\
G_2(\alpha) &= \EE\left[ \Psi^\top (\tbx-\bx) (\tbx-\bx)^\top \Psi | V_f(\tbx,\bx) \le \alpha \right] \in \RR^{(D-d) \times (D-d)}.
\end{align*}
The covariance matrix $G(\alpha)$ can be written as
\begin{align*}
G(\alpha)= \Phi G_1(\alpha) \Phi^\top + \Psi G_2(\alpha) \Psi^\top.
\end{align*}
Let $\hat\lambda_1,\ldots,\hat\lambda_d$ be the eigenvalues of $G_1(\alpha)$ and $\bz_1,\ldots,\bz_d$ be the associated orthonormal eigenvectors, and let $\tilde\lambda_1,\ldots,\tilde\lambda_{D-d}$ be the eigenvalues of $G_2(\alpha)$ and $\tilde\bz_1,\ldots,\tilde\bz_{D-d}$ be the associated orthonormal eigenvectors. Then the eigenvalues of $G(\alpha)$ are $\hat\lambda_1,\ldots,\hat\lambda_d,\tilde\lambda_1,\ldots,\tilde\lambda_{D-d}$ and the associated eigenvectors are  $\Phi \bz_1,\ldots, \Phi \bz_d,\Psi\tilde\bz_1,\ldots,\Psi\tilde\bz_{D-d}$ since
\begin{align*}
G(\alpha) \Phi \bz_j &= \Phi G_1(\alpha) \Phi^\top  \Phi \bz_j = \hat\lambda_j\Phi \bz_j, j=1,\ldots,d,
\\
G(\alpha) \Psi \tilde\bz_k &= \Psi G_1(\alpha) \Psi^\top  \Psi \tilde \bz_k = \tilde\lambda_k \Psi\tilde\bz_k, k=1,\ldots,D-d.
\end{align*}
The eigenvalues satisfy
\begin{align*}
\hat\lambda_j &= (\Phi \bz_j)^\top  G_1(\alpha) \Phi \bz_j \\
&= \bz_j^\top \Phi^\top \Phi \EE\left[ \Phi^\top (\tbx-\bx) (\tbx-\bx)^\top \Phi | V_f(\tbx,\bx) \le \alpha \right] \Phi^\top \Phi \bz_j
\\
& =  \EE\left[\bz_j^\top \Phi^\top (\tbx-\bx) (\tbx-\bx)^\top \Phi \bz_j | V_f(\tbx,\bx) \le \alpha \right]
\\
& = {\var\left[(\Phi \bz_j)^\top (\tbx-\bx) | V_f(\tbx,\bx) \le \alpha \right]}, \ j=1,\ldots,d;
\\
\tilde\lambda_k
& = \var\left[(\Psi \tilde\bz_k)^\top (\tbx-\bx) | V_f(\tbx,\bx) \le \alpha \right], \ k=1,\ldots,D-d.
\end{align*}
By Assumption \ref{assum-gcr}, 
$$\left(\min_k\tilde\lambda_k\right)-\left(\max_j\lambda_j\right)\geq c_0.$$
%$\hat\lambda_j \le c_0 , j=1,\ldots,d$ and $\tilde\lambda_k \ge C_0 , k=1,\ldots,D-d$.
\end{proof}

\section{Proof of Example \ref{lem.sphere}}\label{appen.lem.sphere}
\begin{proof}[Proof of Example \ref{lem.sphere}]
	Denote $\Omega=\supp(\rho)$ and $\calB_{D,r}(\bx)$ the $D$-dimensional ball with radius $r$ centered at $\bx$. Since $\rho$ has a spherical distribution, we have $\Omega=\calB_{D,B}(\mathbf{0})$. Denote $\Omega_D$ as the largest $D$-dimensional hypercube inside $\Omega$. There are infinitely many such hypercubes all of which have side length $2B/\sqrt{D}$. Any such hypercube can be written as
	$$\Omega_D=\{\bx:\bx=a_1\bu_1+\cdots +a_D\bu_D \mbox{ with } \max_{1\leq j\leq D} |a_j|\leq B/\sqrt{D}\}$$
	for a set of orthornormal basis $\{\bu_1,\bu_2,...,\bu_D\}$ in $\RR^D$.
	In this proof, we choose the one satisfying $\bu_j=\bphi_j$ for $1\leq j\leq d$, where $\bphi_j$ denotes the $j$-th column of $\Phi$. We then define
	$$
	\Omega_d:=\{\ba=[a_1,...,a_d]^{\top}\in \RR^d: a_1\bu_1+\cdots +a_d\bu_d\in \Omega_D\cap \calS_{\Phi}\}=\{\ba\in \RR^d: \|\ba\|_{\infty}\leq B/\sqrt{D}\}.
	$$
	Here $\Omega_d$ is a $d$-dimensional hypercube with side length $2B/\sqrt{D}$. For any $\bx\in\Omega_D$, we have $\Phi^\top\bx\in\Omega_d$.
	
	%	$$
	%	\Omega_d\equiv\Omega_D\cap \calS_{\Phi}=\{\bx:\bx=a_1\bu_1+\cdots +a_d\bu_d \mbox{ with } \max_{1\leq j\leq d} |a_j|\leq B\}.
	%	$$
	
	%There are infinitely many such cubes. We choose the one, denoted by $\Omega_d$, such that $\Omega_D\cap \calS_{\Phi}$ is a $d$-dimensional hypercube. Specifically, each column of $\Phi$ is parallel to some edge of $\Omega_d$.
	Then $p_{\alpha}$ can be lower bounded by
	\begin{align}
		p_{\alpha}&=\PP(V_f(\tbx,\bx)\leq \alpha) \nonumber\\
		&= \int_{\Omega} \int_{\Omega} \mone\left\{V_f(\tbx,\bx)\leq \alpha\right\} d\rho(\tbx) d\rho(\bx) \nonumber \\
		&\geq \int_{\Omega_{D}} \int_{\Omega} \mone\left\{V_f(\tbx,\bx)\leq \alpha\right\} d\rho(\tbx) d\rho(\bx).
		\label{eq.PSP.p}
	\end{align}
	In the rest of the proof, we derive a lower bound for the right hand side of (\ref{eq.PSP.p}).

	%	We first show that, for any $\bx_1,\bx_2\in\Omega$ with $\|\Phi^\top(\bx_1-\bx_2)\|\leq \sqrt{\alpha}/C_g$, we have $V_f(\bx_1,\bx_2)\leq \alpha$, where $C_g$ is a Lipschitz constant of $g$. Let $\bx_1,\bx_2\in\Omega$ be any two points such that $\|\Phi^\top(\bx_1-\bx_2)\|\leq \sqrt{\alpha}/C_g$. For any $\tbx,\bx\in \ell(\bx_1,\bx_2)$, we have
	%	$$
	%	 |f(\tbx)-f(\bx)|=|g(\Phi^\top\tbx)-g(\Phi^\top\bx)|\leq C_g\|\Phi^\top(\tbx-\bx)\|\leq \sqrt{\alpha}.
	%	$$
	%	Therefore $V_f(\bx_1,\bx_2)\leq \alpha$. 

	According to Lemma \ref{lemphix}, for any $\bx_1,\bx_2\in\Omega$ with $\|\Phi^\top(\bx_1-\bx_2)\|\leq \sqrt{\alpha}/C_g$, we have $V_f(\bx_1,\bx_2)\leq \alpha$, where $C_g$ is a Lipschitz constant of $g$. 
	
	Define
	\begin{align*}
		&\widetilde{\Omega}_d=\{\ba\in \RR^d: \|\ba\|_{\infty}\leq B/\sqrt{D}-\sqrt{\alpha}/C_g\},\\
		&\widetilde{\Omega}_D=\{\bx\in \Omega_D: \Phi^\top\bx\in\widetilde{\Omega}_d\}.
	\end{align*}
	Here $\widetilde{\Omega}_d\subset \Omega_d$ is a $d$-dimensional hypercube with side length $2B/\sqrt{D}-2\sqrt{\alpha}/C_g>0$ such that, for any $\ba\in \widetilde{\Omega}_d$, we have $\calB_{d,\frac{\sqrt{\alpha}}{C_g}}(\ba)\subset \Omega_d$. Such a choice of $\widetilde{\Omega}_d$ guarantees that for any $\bx\in\widetilde{\Omega}_D$ with $\Phi^\top\bx\in \widetilde{\Omega}_d$, $\calB_{d,\frac{\sqrt{\alpha}}{C_g}}(\Phi^\top\bx)$ is entirely inside $\Omega_d$ (see Figure \ref{fig.spherical} for a demonstration). Denote the volume of a subset $S$ in $\RR^d$ by $\Vol_d(S)$. We have
	\begin{align*}
		\PP(V_f(\tbx,\bx)\leq \alpha|\bx\in \widetilde{\Omega}_D)&\geq \left(\frac{2B}{\sqrt{D}}\right)^{D-d} \Vol_d\left(\calB_{d,\frac{\sqrt{\alpha}}{C_g}}(\Phi^\top\bx)\right)h_{\min}\\
		&=C_1\alpha^{d/2}h_{\min}
	\end{align*}
	\begin{minipage}{0.55\textwidth}
		with $C_1=\frac{1}{C_g^d}\left(\frac{2B}{\sqrt{D}}\right)^{D-d}\frac{\pi^{d/2}}{\Gamma(d/2+1)}$ and $\Gamma$ being the gamma function. Therefore
		\begin{align*}
			p_{\alpha}&\geq \int_{\Omega_{D}} \int_{\Omega} \mone\left\{V_f(\tbx,\bx)\leq \alpha\right\} d\rho(\tbx) d\rho(\bx)\\
			&\geq \int_{\widetilde{\Omega}_{D}}  \PP(V_f(\tbx,\bx)\leq \alpha|\bx\in \widetilde{\Omega}_D) d\rho(\bx)\\
			&\geq \int_{\widetilde{\Omega}_{D}}  C_1\alpha^{d/2}h_{\min} d\rho(\bx)\\
			&=C_2\alpha^{d/2}h_{\min}^2
		\end{align*}
		with 
		$C_2=\frac{1}{C_g^d}\left(\frac{2B}{\sqrt{D}}\right)^{2(D-d)}\left(\frac{2B}{\sqrt{D}}- \frac{2\sqrt{\alpha}}{C_g}\right)^d\frac{\pi^{d/2}}{\Gamma(d/2+1)}.$
	\end{minipage}
	\begin{minipage}{0.4\textwidth}
		\begin{figure}[H]
			\centering		\includegraphics[width=0.6\textwidth]{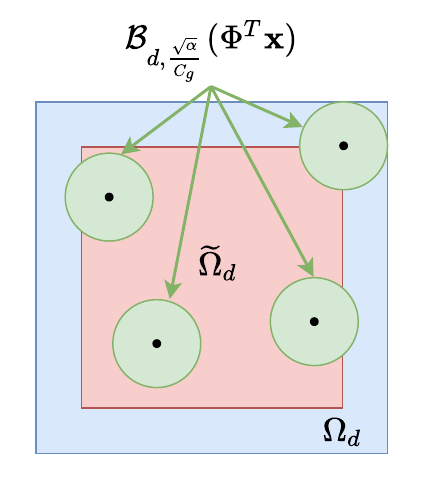}
			\caption{\label{fig.spherical} For any $\bx\in\Omega_D$ with $\Phi^\top\bx\in \widetilde{\Omega}_d$, $\calB_{d,\frac{\sqrt{\alpha}}{C_g}}(\Phi^\top\bx)$ entirely locates inside $\Omega_d$}
		\end{figure}
	\end{minipage}
\end{proof}

\section{Proof of Example \ref{lemmavfgamma}} \label{appen.lem.lemmavfgamma}
\begin{proof}[\bf Proof of Example \ref{lemmavfgamma}]
	Let $\bar{f}_\ell$ be the mean of $f(\bx)$ on $\ell(\bx_i,\bx_j)$ and $\bar{f}_{ij}$ be the mean of $f(\bx)$ on $T_{ij}(r)$.
	%% error mean
	We first estimate the difference between $\bar{f}_\ell$ and $\bar{f}_{ij}$. For simplicity, we denote $\ell(\bx_i,\bx_j)$ by $\ell$. We parameterize $\ell$ by $t$ for $0\leq t\leq 1$ and use $\br(t)$ to denote a point on $\ell$ such that $\br(0) =\bx_i$ and $\br(1)=\bx_j$. Let $\mathbb{S}(t)$ be the disk centered at $\br(t)$ with radius $r$ on the hyperplane of dimension $D-1$ which is perpendicular to $\ell$. %Let $\rho(\bx)$ be the probability measure at $\bx$.
	Let $\rho^\top$ be a measure on $[0,1]$ such that, for any $\br(t) \in \ell,$ $ \rho^\top (dt) = \lim_{r\rightarrow 0} \frac{\rho(T(\br(t),\br(t+dt),r))}{\rho(T(\br(0),\br(1),r))}$, where $T(\br(t),\br(t+dt),r)$ is the tube enclosing $\ell(\br(t),\br(t+dt))$ with radius $r$.
	Since $\rho$ is uniform and $T_{ij}(r)\subset \supp(\rho)$, we can express
	$$
	\bar{f}_\ell=\int_0^1 f(\br(t)) \rho^\top (dt)= \frac{1}{\rho(T_{ij}(r))}\int_{0}^1 \int_{\mathbb{S}(t)} f(\br(t))\rho(d\bx )dt$$
	and
	$$
	\bar{f}_{ij}=\frac{1}{\rho(T_{ij}(r))}\int_{T_{ij}(r)} f(\bx)\rho(d\bx)
	= \frac{1}{\rho(T_{ij}(r))}\int_{0}^1 \int_{\mathbb{S}(t)} f(\bx)\rho(d\bx )dt.
	$$
	Therefore
	\begin{eqnarray*}
		|\bar{f}_\ell-\bar{f}_{ij}|&=&\frac{1}{\rho(T_{ij}(r))}\left| \int_0^1 \int_{\mathbb{S}(t)}[ f(\br(t))-f(\bx)]\rho(d\bx) dt\right|
		\leq r C_g.
	\end{eqnarray*}
	We next derive the error bound between $V_f(\bx_i ,\bx_j)$ and $V_f(\bx_i ,\bx_j,r)$:
	\begin{align*}
		&|V_f(\bx_i ,\bx_j)-V_f(\bx_i ,\bx_j,r)|\\
		%% 1
		=&\left| \frac{1}{\rho(T_{ij}(r))}\int_0^1 \int_{\mathbb{S}(t)} (f(\br(t))-\bar{f}_\ell)^2\rho(d\bx )dt-  \frac{1}{\rho(T_{ij}(r))}\int_0^1 \int_{\mathbb{S}(t)} (f(\bx))-\bar{f}_{ij})^2\rho(d\bx) dt\right|\\
		%% 2
		\leq&\frac{1}{\rho(T_{ij}(r))}\int_0^1 \int_{\mathbb{S}(t)} \left|  (f(\br(t))-\bar{f}_\ell)^2 - (f(\bx))-\bar{f}_{ij})^2\right| \rho(d\bx )dt\\
		%% 3
		=&\frac{1}{\rho(T_{ij}(r))}\int_0^1 \int_{\mathbb{S}(t)} \left|  (f(\br(t))-\bar{f}_\ell+f(\bx)-\bar{f}_{ij}) (f(\br(t))-f(\bx)+\bar{f}_{ij}-\bar{f}_\ell) \right| \rho(d\bx )dt\\
		%% 4
		\le&\frac{1}{\rho(T_{ij}(r))}\int_0^1 \int_{\mathbb{S}(t)} \left|  (f(\br(t))-\bar{f}_\ell+f(\bx)-\bar{f}_{ij}) \right| \cdot 2rC_g  \rho(d\bx )dt.
		%%
		%  \leq & \frac{1}{\rho(\ell)}\int_0^1 \frac{1}{\rho(\mathbb{S}(t))}\left( \int_{\mathbb{S}(t)} \left( |f(\br(t))-\bar{f}_\ell|+ |f(\bz)-\bar{f}_{ij}|\right) \cdot 2r C_g \rho(d\bz) \right)\rho^\top(dt)\\
		%%
		% =& 2r C_g \left( \frac{1}{\rho(\ell)} \int_{\ell} |f(\bx)-\bar{f}_\ell| \rho(d\bx) +\frac{1}{\rho(T)} \int_T |f(\bx)-\bar{f}_{ij}|\rho(d\bx)\right).
	\end{align*}
	On the other hand, for any $\br(t) \in \ell$,
	\begin{align*}
		|f(\br(t))-\bar{f}_\ell| \le  C_g \|\bx_i -\bx_j\| \le 2C_g B.
	\end{align*}
	Similarly, for any $\bx \in T_{ij}(r)$,
	$$ |f(\bx)-\bar{f}_{ij}|  \le C_g \sup_{\bz, \bw \in T_{ij}(r)} \|\bz-\bw\| = C_g \sqrt{\|\bx_i-\bx_j\|^2 + 4 r^2} \le 3C_g B $$
	when $4r^2 \le 5B^2.$ In summary,
	\begin{align*}
		&|V_f(\bx_i ,\bx_j)-V_f(\bx_i ,\bx_j,r)|\le 10  C_g^2 B r.
	\end{align*}
\end{proof}

\section{Proof of Theorem \ref{prop.alg1}}
\label{appen.prop.alg2}
The proof of Theorem \ref{prop.alg1} relies on several lemmas, which are presented and proved in Section \ref{sec.thm.alg1.lemma}. We prove Theorem \ref{prop.alg1} in Section \ref{sec.thm.alg1.proof}.
\subsection{Some key lemmas}\label{sec.thm.alg1.lemma}
The following lemma gives an estimate on the difference between the population variance of a bounded random variable and its empirical counterpart.
\begin{lem}
	\label{lemconv}
	Let $s$ be a random variable in $[m-A,m+A]$ for some $m \in \RR$ and $A>0$. Suppose $\{s_i\}_{i=1}^{n}$ are independent copies of $s$. Denote the empirical variance by $\hV(s) = \frac{1}{n-1} \sum_{i=1}^n (s_i -\bar s )^2$
	where $\bar s = (s_1+\ldots+s_n)/n.$
	Then
	\begin{eqnarray*}
		\PP\left( |\hV(s)-\EE \hV(s)|\geq t\right)\leq 2e^{-\frac{nt^2}{16A^4}}\ \text{ for any } t>0.
	\end{eqnarray*}
\end{lem}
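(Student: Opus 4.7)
My plan is to apply McDiarmid's bounded differences inequality to $\hV(s)$ viewed as a function of the independent coordinates $(s_1, \ldots, s_n)$. The first step is to rewrite the empirical variance in the symmetric pairwise form
\begin{align*}
\hV(s) = \frac{1}{n-1}\sum_{i=1}^n (s_i - \bar s)^2 = \frac{1}{2n(n-1)} \sum_{i=1}^n \sum_{j=1}^n (s_i - s_j)^2,
\end{align*}
which one verifies by expanding $\sum_{i,j}(s_i - s_j)^2 = 2n\sum_i s_i^2 - 2\bigl(\sum_i s_i\bigr)^2$. The advantage of this U-statistic form is that the dependence on each individual coordinate becomes explicit and symmetric, which is ideal for a bounded-differences argument.

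Next I verify the bounded differences condition. If we replace $s_k$ by some $s_k' \in [m-A, m+A]$ and keep all other coordinates fixed, only the terms in the double sum involving $s_k$ change; by the symmetry of the sum the resulting change in $\hV$ equals
\begin{align*}
\frac{1}{n(n-1)} \sum_{j \ne k} \left[ (s_k - s_j)^2 - (s_k' - s_j)^2 \right].
\end{align*}
Since all $s_i, s_k'$ lie in $[m-A, m+A]$, the differences $s_k - s_j$ and $s_k' - s_j$ lie in $[-2A, 2A]$, so each squared quantity lies in $[0, 4A^2]$ and each summand has absolute value at most $4A^2$. Summing $n-1$ such terms and dividing by $n(n-1)$ yields the uniform sensitivity bound $c_k := 4A^2/n$, valid for every $k = 1,\ldots,n$.

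Plugging $c_k = 4A^2/n$ into McDiarmid's inequality gives
\begin{align*}
\PP\left( |\hV(s) - \EE \hV(s)| \ge t \right) \le 2\exp\left(-\frac{2t^2}{\sum_{k=1}^n c_k^2}\right) = 2\exp\left(-\frac{nt^2}{8A^4}\right),
\end{align*}
which is in fact slightly stronger than and in particular implies the stated bound $2\exp(-nt^2/(16A^4))$. The whole argument is essentially a one-step application of a standard concentration tool once the U-statistic rewriting is in place, so I do not expect any substantive obstacle; the only careful calculation is the bounded-differences estimate in the second paragraph, which follows immediately from the hypothesis that $s \in [m-A, m+A]$ almost surely.
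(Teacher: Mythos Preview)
Your proof is correct. The paper does not give a detailed argument but simply states that the lemma ``is a consequence of U-statistics \cite{hoeffding1994probability} applied on $\hV(s)$,'' i.e.\ it invokes Hoeffding's concentration inequality for U-statistics directly on the order-$2$ kernel $h(s_i,s_j)=\tfrac12(s_i-s_j)^2\in[0,2A^2]$. Your route is slightly different: you use the same pairwise rewriting to recognise the U-statistic structure, but then apply McDiarmid's bounded-differences inequality coordinatewise rather than Hoeffding's U-statistic bound. Both arguments are one-step applications of a standard concentration tool once the identity $\hV(s)=\frac{1}{2n(n-1)}\sum_{i,j}(s_i-s_j)^2$ is in place; your bounded-differences computation is clean and in fact yields the sharper constant $8A^4$ in the exponent instead of the paper's $16A^4$. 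The trade-off is only one of packaging: Hoeffding's U-statistic inequality hides the bounded-differences bookkeeping inside a citable lemma, whereas your argument makes that step explicit.
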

%\begin{proof}[\bf Proof of Lemma \ref{lemconv}]
Lemma \ref{lemconv} is a consequence of U-statistics \cite{hoeffding1994probability} applied on $\hV(s)$.

Based on Lemma \ref{lemconv}, we prove a high probability bound between $\Vyijgamma$ and $\hVyijgamma$:
\begin{lem}\label{lem.III}
	Let $\{\bx_i\}_{i=1}^n$ be i.i.d. samples from the probability measure $\rho$ , and $\{y_i\}_{i=1}^n$ be sampled according to the model in \eqref{eqyi}, under Assumption \ref{assumrho}(i), \ref{assumg}, \ref{assumxi} and \ref{assumtube}. Let $\nu >0$ and set $\alpha_0$ and $r$ according to (\ref{eqalpha00}).
	%\begin{equation}
	%\alpha_0 = \max\left \{ C_2 \left( \frac{\log n}{n}\right)^{\frac 1 D}, C_3   \left( \frac{\log n}{n}\right)^{\frac 1 {D+2}} \right\} \text{ and }  r =C_1\alpha_0
	%\label{eqalpha0}
	%\end{equation}
	%with some $C_1\leq 1/(8C_g^2)$, $C_2 =  \left[{56\nu C_g^2}/(3c_1C_1^{D-1})\right]^{\frac 1 D}  $, $C_3 = \left[{256\nu(M+\sigma)^4C_g^2}/{(c_1C_1^{D-1})} \right]^{\frac{1}{D+2}}$. 
	Then $$\PP \left\{|\Vyijgamma- \hVyijgamma|\leq\frac{\alpha_0}{2}+2\sigma^2 \right\} \ge 1-4n^{-\nu}.$$
\end{lem}
\begin{proof}[\bf Proof of Lemma \ref{lem.III}]
	We set $\eta =  \alpha_0/(2C_g^2) $, and consider the tube $T_{ij}(r)$ with $\|\bx_i-\bx_j\| > \eta$ and $\|\bx_i-\bx_j\| \le \eta$ as separate cases.
	
	\begin{description}
		
		\item[Case I when $\|\bx_i-\bx_j\| \le \eta$:]
		
		When $\|\bx_i-\bx_j\| \le \eta$, we expect $\Vyijgamma$ to be small since $f(\bx)$ has a small variation within $T_{ij}(r)$. Let $\bz = (\bx_i+\bx_j)/2$. For any $\bx \in T_{ij}(r)$, $|f(\bx)-f(\bz)| \le C_g  \sqrt{(\eta/2)^2+r^2}$.  Hence,
		\begin{align*}
			|\Vyijgamma- \hVyijgamma|
			&\le
			\max(\Vyijgamma,\hVyijgamma)\\
			&\leq 2\left(C_g^2[(\eta/2)^2+r^2]+\sigma^2\right) \le  \alpha_0/2  + 2\sigma^2
		\end{align*}
		where the last inequality holds as long as $ \eta^2 \le \alpha_0 / (2C_g^2) $ and $r^2  \leq \alpha_0 /(8 C_g^2)$. By \eqref{eqalpha00}, $\alpha_0$ is small when $n$ is sufficiently large. These conditions are guaranteed as $ \eta^2 < \eta = \alpha_0 / (2C_g^2) $ and $r^2 <r \le \alpha_0 /(8 C_g^2)$ when $n$ is sufficiently large.

		\item[Case II when $\|\bx_i-\bx_j\| > \eta$:]
		When $\|\bx_i-\bx_j\| > \eta$, there are sufficient points in $T_{ij}(r)$ so that $\hVyijgamma$ is concentrated on $\Vyijgamma$.
		Let $\rho(T_{ij}(r))$ be the measure of the tube $T_{ij}(r)$, which satisfies $\rho(T_{ij}(r)) \ge c_1 r^{D-1}\eta$ by \eqref{eqmeasuretube}. Let $n_{ij}(r)$ be the number of points in $T_{ij}(r)$ and then  $\widehat\rho(T_{ij}(r)) = n_{ij}(r)/n$ is the empirical measure of $T_{ij}(r)$. By \cite[Lemma 29]{liao2016adaptive}, we have the following concentration of measure:
		\begin{align}
			&\PP\left\{|\widehat\rho(T_{ij}(r)) - \rho(T_{ij}(r))| \ge \frac 1 2 \rho(T_{ij}(r)) \right\} \le 2 e^{- \frac 3 {28} n \rho(T_{ij}(r))}
			\le 2 e^{- \frac 3 {28}c_1 n r^{D-1}\eta}
			%= 2 e^{- \frac {3c_1} {14 (2\sqrt 2 C_g)^D} n\alpha^D}.
			%= 2 e^{- \frac {3c_1} {14 (2\sqrt 2 C_g)^D} n\alpha^D}.
			\label{lemmavpeq1}
		\end{align}
		On the condition of the event $|\widehat\rho(T_{ij}(r)) - \rho(T_{ij}(r))| \le \frac 1 2 \rho(T_{ij}(r))$, we have $\widehat\rho(T_{ij}(r)) \ge \frac 1 2 \rho(T_{ij}(r)) $, which implies $n_{ij}(r) \ge \frac 1 2 c_1 n r^{D-1}\eta$. By Lemma \ref{lemconv}, $\hVyijgamma$ is concentrated on $\Vyijgamma$ with high probability:
		\begin{align*}
			&\PP\left\{|\Vyijgamma - \hVyijgamma|  \ge t \right\}
			\le 2e^{- \frac{n_{ij}(r)t^2}{16(M+\sigma)^4}}  ,\forall t>0.
		\end{align*}
		Setting $t = \frac 1 2 \alpha_0$ gives rise to
		\begin{align}
			&\PP\left\{|\Vyijgamma - \hVyijgamma|  \ge \frac 1 2 \alpha_0 \bigg  | |\widehat\rho(T_{ij}(r)) - \rho(T_{ij}(r))| \le \frac 1 2 \rho(T_{ij}(r)) \right\} \nonumber
			\\
			\le & 2e^{- \frac{n_{ij}(r) \alpha_0^2}{64(M+\sigma)^4 }}
			\le
			e^{- \frac{ c_1 n r^{D-1}\eta \alpha_0^2}{128(M+\sigma)^4 }}
			\label{lemmavpeq2}
			%\le
			%e^{- \frac{  c_1 n \alpha^{D+4}}{256 (M+\sigma)^4 (2 \sqrt 2 C_g)^D }} .
		\end{align}
		Combining \eqref{lemmavpeq1} and \eqref{lemmavpeq2} yields
		\begin{align*}
			&\PP\left\{|\Vyijgamma - \hVyijgamma|  \ge \frac 1 2 \alpha_0  \right\}
			\le
			\PP\left\{|\widehat\rho(T_{ij}(r)) - \rho(T_{ij}(r))| \ge \frac 1 2 \rho(T_{ij}(r)) \right\}
			\\
			&+  \PP\left\{|\Vyijgamma - \hVyijgamma|  \ge \frac 1 2 \alpha_0 \bigg  | |\widehat\rho(T_{ij}(r)) - \rho(T_{ij}(r))| \le \frac 1 2 \rho(T_{ij}(r)) \right\}
			\\
			& \le  2 e^{- \frac {3} {28}c_1 n r^{D-1}\eta}
			+ 2e^{- \frac{ c_1 n r^{D-1}\eta \alpha_0^2}{128(M+\sigma)^4 }}
			\leq  2 e^{-\frac{3c_1 C_1^{D-1}n \alpha_0^D}{56C_g^2}}
			+ 2e^{-\frac{c_1 C_1^{D-1}n \alpha_0^{D+2}}{256(M+\sigma)^4 C_g^2}}.
		\end{align*}
		When $\alpha_0$ is chosen according to \eqref{eqalpha00}, one obtains
		$$\PP\left\{|\Vyijgamma - \hVyijgamma|  \ge \frac 1 2 \alpha_0  \right\} \le 4 n^{-\nu}.$$
		
	\end{description}
\end{proof}

A key step in the proof of Theorem \ref{prop.alg1} is to estimate the difference between $\hV_y(\bx_i,\bx_j,r)$ and $V_f(\bx_i,\bx_j)$, which is given in the next lemma.
\begin{lem}
	\label{lem.hV}
	Let $\{\bx_i\}_{i=1}^n$ be i.i.d. samples from the probability measure $\rho$, and $\{y_i\}_{i=1}^n$ be sampled according to the model in \eqref{eqyi}, under Assumption \ref{assumrho}(i), \ref{assumg}-\ref{assumVr}.
	Let $\nu >0$ and set $\alpha_0$ and $r$ as in (\ref{eqalpha00}).
	Then
	\begin{equation}
		\label{eqvhatv}
		\PP\left( |V_f(\bx_i,\bx_j)-\hV_y(\bx_i,\bx_j,r)|\leq \alpha_0+3\sigma^2\right) \geq 1-4n^{-\nu}.
	\end{equation}
\end{lem}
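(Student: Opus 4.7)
The plan is to combine the three-term decomposition \eqref{eq.hVdecom} with the already-established bounds on each term, then verify that the stated choice of $C_1$ simultaneously satisfies the tail-probability constraint inherited from Lemma \ref{lem.III} and the deterministic constraint needed to swallow term I into $\alpha_0/2$.

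First, I would write the triangle inequality
\begin{align*}
|V_f(\bx_i,\bx_j)-\hV_y(\bx_i,\bx_j,r)|
&\le \underbrace{|V_f(\bx_i,\bx_j)-V_f(\bx_i,\bx_j,r)|}_{\rm I}\\
&\quad + \underbrace{|V_f(\bx_i,\bx_j,r)-V_y(\bx_i,\bx_j,r)|}_{\rm II} + \underbrace{|V_y(\bx_i,\bx_j,r)-\hV_y(\bx_i,\bx_j,r)|}_{\rm III}
\end{align*}
and bound each piece. For II, since $y=f(\bx)+\xi$ with $\xi\in[-\sigma,\sigma]$ independent of $\bx$, the variance of $y$ on $T_{ij}(r)$ differs from that of $f$ on $T_{ij}(r)$ by the noise variance, which is at most $\sigma^2$ by Assumption \ref{assumxi}. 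For I, Lemma \ref{lemmavfgamma} gives ${\rm I}\le 10L_g^2 B r$ provided $4r^2\le 5B^2$, and for III, Lemma \ref{lem.III} gives ${\rm III}\le \alpha_0/2 + 2\sigma^2$ with probability at least $1-4n^{-\nu}$ provided $C_1\le 1/(8L_g^2)$.

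Next, with $r=C_1\alpha_0$, the deterministic bound on I becomes $10 L_g^2 B C_1\alpha_0$. To force this below $\alpha_0/2$ I would require $C_1\le 1/(20 L_g^2 B)$. Imposing both this and the Lemma \ref{lem.III} constraint $C_1\le 1/(8L_g^2)$ simultaneously yields
$$
C_1 \le \min\!\left(\frac{1}{20 L_g^2 B},\frac{1}{8L_g^2}\right) = \frac{1}{4L_g^2\max(5B,2)},
$$
which is exactly the value stated in the lemma. For this $C_1$, the condition $4r^2\le 5B^2$ required by Lemma \ref{lemmavfgamma} is automatic once $n$ is large enough that $\alpha_0\le 1$, since $r=C_1\alpha_0$ is then very small.

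Finally, on the event of probability $\ge 1-4n^{-\nu}$ furnished by Lemma \ref{lem.III}, combining the three bounds gives
$$
|V_f(\bx_i,\bx_j)-\hV_y(\bx_i,\bx_j,r)| \le \tfrac{\alpha_0}{2} + \sigma^2 + \tfrac{\alpha_0}{2} + 2\sigma^2 = \alpha_0 + 3\sigma^2,
$$
which is the assertion. The only real obstacle is bookkeeping the two constraints on $C_1$ coming from the heterogeneous sources (a deterministic Lipschitz-tube argument for I versus a concentration-based argument for III) and checking that the stated $C_1$ is indeed the tighter of the two; once that matching is done, the proof is a one-line triangle-inequality assembly.
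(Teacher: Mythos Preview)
Your proposal is correct and follows exactly the same approach as the paper: the decomposition \eqref{eq.hVdecom} together with Lemma \ref{lemmavfgamma} for I, the trivial $\sigma^2$ bound for II, and Lemma \ref{lem.III} for III, then summing. Your extra bookkeeping showing that $C_1=\left(4L_g^2\max(5B,2)\right)^{-1}$ is precisely $\min\{1/(20L_g^2B),\,1/(8L_g^2)\}$ is more explicit than the paper's own proof, which simply states the three bounds and concludes.
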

%Theorem \ref{lem.hV} shows that if $r$ is chosen according to (\ref{eqalpha00}), when $n$ is large and $\sigma$ is small, $\hV_y(\bx_i,\bx_j,r)$ is close to $V_f(\bx_i,\bx_j)$ with high probability.
\begin{proof}[\bf Proof of Lemma \ref{lem.hV}]
	We decompose the difference by
\begin{align}
&|\Vfij - \hVyijgamma| \nonumber\\
=&|\Vfij-\Vfijgamma
+ \Vfijgamma- \Vyijgamma
+ \Vyijgamma- \hVyijgamma|\nonumber\\
\leq& \underbrace{|\Vfij-\Vfijgamma|}_{\rm I}
+\underbrace{ |\Vfijgamma- \Vyijgamma| }_{\rm II}
+\underbrace{  |\Vyijgamma- \hVyijgamma|}_{\rm III}.
\label{eq.hVdecom}
\end{align}

The term ${\rm I}$ in (\ref{eq.hVdecom}) represents the difference between the variance of $f$ over the segment between $\bx_i$ and $\bx_j$, and the variance of $f$ in the tube with radius $r$ enclosing this segment. According to Assumption \ref{assumVr}, ${\rm I}
\leq \frac{\alpha_0}{2} \text{ if } r \le \frac{\alpha_0}{2c_2}$.

The term ${\rm II}$ satisfies ${\rm II}\le \sigma^2$ since it captures the variance of the bounded noise in $[-\sigma,\sigma]$.
%

%Under the condition that $g$ is Lipschitz, it scales linearly with respect to the tube radius $r$.
%An upper bound of ${\rm I}$ is given below:
%\begin{lem}
%\label{lemmavfgamma}
%Assume Assumption \ref{assumrho}(i) and \ref{assumg}, and $f$ is defined as \eqref{eqyi}. For every $\bx_i,\bx_j \in \RR^D$ and sufficiently small $r$,
% \begin{equation}
% \label{eqvfgamma}
%    |V_f(\bx_i ,\bx_j)-V_f(\bx_i ,\bx_j,r)|\le 10  C_g^2 B r.
%      \end{equation}
%\end{lem}

The term ${\rm III}$ in (\ref{eq.hVdecom}) captures the difference between the population variance of $y$ in the tube $T_{ij}(r)$ and its empirical counterpart. A high probability bound is given by Lemma \ref{lem.III}.
%We expect it to be small if there are sufficient amount of points in $T_{ij}(r)$. 
%\end{proof}

Putting the above ingredients together, we have
\begin{align*}
{\rm I}
&\leq \frac{\alpha_0}{2} \text{ if } r \le \frac{\alpha_0}{2c_2} \text{ from Assumption \ref{assumVr}},
\\
{\rm II} & \le  \sigma^2,
\\
{\rm III} & \leq\frac{\alpha_0}{2}+2\sigma^2 \text{ with probability no less than }1-4n^{-\nu},
\end{align*}
which implies \eqref{eqvhatv}.

\end{proof}

The following lemma gives an upper bound of $V_f(\bx_i,\bx_j)$.
\begin{lem}
	\label{lemphix}
	Suppose $f$ is defined as \eqref{eqyi}, with the function $g$ satisfying Assumption \ref{assumg}.
	For any two points $\bx_i, \bx_j \in \RR^D$, we have $V_f(\bx_i,\bx_j) \le C_g^2 \|\Phi^\top\bx_i-\Phi^\top \bx_j\|^2$.
\end{lem}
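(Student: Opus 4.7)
The plan is to exploit the fact that $f(\bx) = g(\Phi^T \bx)$ so that along the segment $\ell(\bx_i,\bx_j)$ the function $f$ depends only on the projected coordinate $\Phi^T \bx$, and then to use the Lipschitz property of $g$ (guaranteed by Assumption \ref{assumg}, since $s \ge 1$) to turn this into a uniform bound on the oscillation of $f$ over the segment.

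First I would parameterize the segment as $\bx(t) = (1-t)\bx_i + t \bx_j$ for $t \in [0,1]$, so that $\Phi^T \bx(t) = (1-t)\Phi^T\bx_i + t\, \Phi^T\bx_j$. For any $s,t \in [0,1]$, the Lipschitz property of $g$ gives
\begin{equation*}
|f(\bx(s)) - f(\bx(t))| = |g(\Phi^T\bx(s)) - g(\Phi^T\bx(t))| \le L_g |s-t| \,\|\Phi^T\bx_i - \Phi^T\bx_j\| \le L_g \|\Phi^T\bx_i - \Phi^T\bx_j\|,
\end{equation*}
so the oscillation of $f$ on $\ell(\bx_i,\bx_j)$ is at most $L_g \|\Phi^T\bx_i - \Phi^T\bx_j\|$.

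Next I would observe that if $\bar f_\ell$ denotes the mean of $f$ on $\ell(\bx_i,\bx_j)$ (under whatever conditional distribution is used in the definition of $V_f$), then by Jensen or the triangle inequality applied inside the averaging, $|f(\bx) - \bar f_\ell| \le \sup_{\bx' \in \ell(\bx_i,\bx_j)} |f(\bx) - f(\bx')| \le L_g \|\Phi^T\bx_i - \Phi^T\bx_j\|$ for every $\bx \in \ell(\bx_i,\bx_j)$. Squaring and averaging gives
\begin{equation*}
V_f(\bx_i,\bx_j) = \EE\left[(f(\bx) - \bar f_\ell)^2 \,\big|\, \bx \in \ell(\bx_i,\bx_j)\right] \le L_g^2 \|\Phi^T\bx_i - \Phi^T\bx_j\|^2,
\end{equation*}
which is the claim.

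There is no real obstacle here; this is essentially a one-line consequence of the Lipschitz property of $g$ combined with the linear parameterization of the segment. The only subtlety is being careful that the inequality $|f(\bx) - \bar f_\ell| \le L_g \|\Phi^T\bx_i - \Phi^T\bx_j\|$ holds pointwise on the segment, which follows from the uniform oscillation bound above regardless of the precise conditional measure used on $\ell(\bx_i,\bx_j)$.
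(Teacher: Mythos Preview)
Your proposal is correct and follows essentially the same approach as the paper: parameterize the segment, use the Lipschitz property of $g$ to bound the oscillation of $f$ on $\ell(\bx_i,\bx_j)$ by $L_g\|\Phi^T\bx_i-\Phi^T\bx_j\|$, and conclude the variance bound. The only cosmetic difference is that the paper compares $f(\bx)$ to the fixed endpoint value $f(\bx_i)$ (using $\var(X)\le \EE[(X-c)^2]$ with $c=f(\bx_i)$), whereas you compare to the mean $\bar f_\ell$; both routes give the same inequality.
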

\begin{proof}[\bf Proof of Lemma \ref{lemphix}]
	For any $\bx = (1-t) \bx_i + t \bx_j, t \in [0,1]$, we have
	\begin{align*}
		|f(\bx) - f(\bx_i)| & = |g(\Phi^\top \bx) - g(\Phi^\top \bx_i)|
		\\
		& \le C_g  \|\Phi^\top [(1-t) \bx_i + t \bx_j] -\Phi^\top \bx_i\|
		\\
		& = C_g  \| t (\Phi^\top \bx_j - \Phi^\top \bx_i)
		\|
		\le C_g \|\Phi^\top\bx_i-\Phi^\top \bx_j\|.
	\end{align*}
	Therefore,  $V_f(\bx_i,\bx_j) = \var\left(f(\bx)| \bx= (1-t) \bx_i + t \bx_j, t \in [0,1]\right) \le C_g^2 \|\Phi^\top\bx_i-\Phi^\top \bx_j\|^2$.
\end{proof}

%\section{Proof of Theorem \ref{prop.alg1}}
%\label{appen.prop.alg2}
\subsection{Proof of Theorem \ref{prop.alg1}}\label{sec.thm.alg1.proof}
\begin{proof}[\bf Proof of Theorem \ref{prop.alg1}]
We prove (\ref{eq.hV}) and (\ref{eq.hn}) in sequel.

\noindent\textbf{$\bullet$ Proof of (\ref{eq.hV}).}
Lemma \ref{lem.hV} gives an estimate on $|V_f(\bx_i,\bx_j)-\hV_y(\bx_i,\bx_j,r)|$ for an $(i,j)$ pair. We prove (\ref{eq.hV}) by deriving a union bound to show that if $n$ is large enough, then with high probability, every $V_f(\bx_{i},\bx_{j})$ is small for all the $(i,j)$ pairs satisfying $\calA(i,j) = 1$.
%\begin{lem}
%\label{lem.prob}
%Let $\{\bx_i\}_{i=1}^n$ be i.i.d. samples from the probability measure $\rho$, and $\{y_i\}_{i=1}^n$ be sampled according to the model in \eqref{eqyi}, under Assumption \ref{assumrho}(i), \ref{assumg}-\ref{assumVr}. 
%%
%Let $\nu >0$ and set $\alpha_0, r$ as in (\ref{eqalpha00}). %with $C_1=\left(4C_g^2\max(5B,2)\right)^{-1}$.
%Index the output pairs $\{(\bx_i,\bx_j)| \calA(i,j) = 1\}$ by Algorithm \ref{alg1} as $\{(\bx_{i_k},\bx_{j_k})\}_{k=1}^{\hn_{\alpha}}$. Then for any $\alpha>0$,
%  $$
%  \PP\left(\bigcap_{k=1}^{\hn_{\alpha}}\left[V_f(\bx_{i_k},\bx_{j_k})\leq \alpha+ \alpha_0+3\sigma^2\right]\right)\geq 1-4\hn_{\alpha}n^{-\nu}.
%  $$
%\end{lem}
%\begin{proof}[\bf Proof of Lemma \ref{lem.prob}]
  Lemma \ref{lem.hV} implies that, for any $\alpha>0$, $$
  \PP\left(V_f(\bx_{i_k},\bx_{j_k}) > \alpha+\alpha_0+3\sigma^2\right) \leq 4n^{-\nu}.
  $$
  Then we have
  \begin{align*}
  & \PP\left(\bigcap_{k=1}^{\hn_{\alpha}}\left[V_f(\bx_{i_k},\bx_{j_k})\leq \alpha+ \alpha_0+3\sigma^2\right]\right)\\
    = &1-\PP\left(\bigcup_{k=1}^{\hn_{\alpha}}\left[V_f(\bx_{i_k},\bx_{j_k})> \alpha+ \alpha_0+3\sigma^2\right]\right)\\
    \geq& 1-\sum_{k=1}^{\hn_{\alpha}}\PP\left(V_f(\bx_{i_k},\bx_{j_k})> \alpha+ \alpha_0+3\sigma^2\right)\\
   \geq& 1-4\hn_{\alpha}n^{-\nu}\\
   \geq & 1-2n^{-(\nu-1)}.
  \end{align*}
%\end{proof}

\noindent\textbf{$\bullet$ Proof of (\ref{eq.hn}).} 
%We next prove (\ref{eq.hn}) using the following lemma:
%\begin{lem}
%	\label{lemnhat}
%	Let $\{\bx_i\}_{i=1}^n$ be i.i.d. samples of the probability measure $\rho$, and $\{y_i\}_{i=1}^n$ be sampled according to the model in \eqref{eqyi}, under Assumption \ref{assumrho}(i), \ref{assumg}-\ref{assumVr}. Set $\alpha, r$ according to (\ref{eqalpha}) and (\ref{eqalpha00}), respectively.
%	For any $\nu>2$, if $n$ is sufficiently large such that $2 (n/\log n)^{\frac{d}{2D}} \le n$, then running Algorithm \ref{alg1} gives rise to $\hn_\alpha \le n/2$ and
%	\begin{align*}
%		\PP(\widehat{n}_{\alpha} \ge n/4)\geq 1-2n^{-(\nu-2)}.
%	\end{align*}
%\end{lem}
%To prove Lemma \ref{lemnhat}, we need an upper bound of $V_f(\bx_i,\bx_j)$:
%
%
%
%
%
%\begin{proof}[\bf Proof of Lemma \ref{lemnhat}]
In Algorithm \ref{alg1}, we say two points $\bx_i,\bx_j$ are connected if Algorithm \ref{alg1} outputs $\calA(\bx_i,\bx_j) = 1$.
Algorithm \ref{alg1} guarantees that if $\bx_i$ and $\bx_j$ are connected, they can not be connected with any other points, so that the connected pairs are independent from each other. As a consequence, $\hn_{\alpha}$ is upper bounded by $n/2$.

To prove the lower bound of $\hn_{\alpha}$, we first discretize the domain $[-B,B]^d$ into small cubes along each direction with grid spacing $h = 2B (\log n/n)^{\frac {1}{2D}}$. Denote the $d$-dimensional cube with side length $h$ as $\widetilde\Omega_k,  k=1,\ldots,N$, and {then $N \leq (2B/h)^d = (n/\log n)^{\frac {d}{2D}}$.} Define $N$ prisms in $\RR^D$ as
$$\Omega_k = \{\bx \in \RR^D: \Phi^\top \bx \in \widetilde\Omega_k\},\ k =1,\ldots,N. $$
For any $\Omega_k$ and any $\bx_i,\bx_j \in \Omega_k$, we next show $\hV_y(\bx_i,\bx_j,r) \le \alpha$ with high probability. If $\bx_i,\bx_j \in \Omega_k$, $\|\Phi^\top \bx_i - \Phi^\top \bx_j\| \le \sqrt{d} h$ and by Lemma \ref{lemphix}, $V_f(\bx_i,\bx_j) \le d C_g^2 h^2 =  4d C_g^2 B^2 (\log n /n)^{\frac 1 D}$. According to Lemma \ref{lem.hV},
$
\PP\left( |V_f(\bx_i,\bx_j)-\hV_y(\bx_i,\bx_j,r)|\leq \alpha_0+3\sigma^2\right) \geq 1-4n^{-\nu}
$. Therefore, for any $\bx_i,\bx_j \in \Omega_k$, we have
$$\PP\left\{\hV_y(\bx_i,\bx_j,r) > 4d C_g^2 B^2 \left(\frac{\log n}{n}\right)^{\frac 1 D}+\alpha_0+3\sigma^2 \right\} \le 4n^{-\nu}.$$
Denote $\#\Omega_k = \#\{\bx_i: \bx_i \in \Omega_k\}.$ Applying a union bound gives
\begin{align*}
&\PP\left\{\exists \Omega_k \text{ such that }\exists \bx_i ,\bx_j \in \Omega_k:\  \hV_y(\bx_i,\bx_j,r) > 4d C_g^2 B^2 \left(\frac{\log n}{n}\right)^{\frac 1 D}+\alpha_0+3\sigma^2 \right\}
\\
\le
&  \sum_{k}\PP\left\{ \exists \bx_i ,\bx_j \in \Omega_k:\  \hV_y(\bx_i,\bx_j,r) > 4d C_g^2 B^2 \left(\frac{\log n}{n}\right)^{\frac 1 D}+\alpha_0+3\sigma^2 \right\}
\\
\le &\sum_k  {\#\Omega_k \choose 2} 4n^{-\nu}
\le \sum_k 2 (\#\Omega_k)^2 n^{-\nu}
\le 2 \left(\sum_k \#\Omega_k\right)^2 n^{-\nu} \le 2n^{-(\nu-2)}.
\end{align*}
The equation above shows that, in all sets $\Omega_k, k=1,\ldots,N$, all pairs of points $\bx_i,\bx_j$ in each $\Omega_k$ satisfy $\hV_y(\bx_i,\bx_j,r) \le \alpha$ with probability no less than $1-2n^{-(\nu-2)}$.

 In Algorithm \ref{alg1}, two points $\bx_i,\bx_j$ are likely to be connected if $\hV_y(\bx_i,\bx_j,r) \le \alpha$.  Under the condition that in all the sets $\Omega_k, k=1,\ldots,N$, all pairs of points $\bx_i,\bx_j$ in each $\Omega_k$ satisfy $\hV_y(\bx_i,\bx_j,r) \le \alpha$, there is at most one point in each $\Omega_k$ that is not connected with other points in the output of Algorithm \ref{alg1}. Therefore, the number of connected pairs satisfies
$$\hn_\alpha \ge \frac 1 2 (n-N) = \frac 1 2 \left[n - \left(\frac{n}{\log n}\right)^{\frac{d}{2D}} \right] \ge \frac n 4$$
if $n$ is sufficiently large such that $2 (n/\log n)^{\frac{d}{2D}} \le n$.
%\end{proof}

%	Combining Lemma \ref{lem.prob} and \ref{lemnhat} proves Proposition \ref{prop.alg1}.
\end{proof}

\section{Proof of Lemma \ref{lem.sqExpectation}}
\label{appen.lem.sqExpectation}
\begin{proof}[\bf Proof of Lemma \ref{lem.sqExpectation}]
$\EE(X^2)$ can be computed by the following intergral
  \begin{align*}
    \EE(X^2)
    &= {\int_{0}^{+\infty}2t\PP(X\geq t)dt }\\
   % &=\int_0^{+\infty} 2tAe^{-\frac{at^2}{b+ct}}dt\\
    &=\int_0^{+\infty} 2t\min(1,Ae^{-\frac{at^2}{b+ct}})dt \\
    &\leq \int_0^{t_0} 2t dt+ \int_{t_0}^{t_1}2 tAe^{-\frac{at^2}{2b}} dt + \int_{t_1}^{+\infty} 2tAe^{-\frac{at}{2c}}dt\\
    &\leq t_0^2+ \frac{2Ab}{a}\left( e^{-\frac{at_0^2}{2b}}-e^{-\frac{at_1^2}{2b}}\right) + \frac{4c}{a}Ae^{-\frac{at_1}{2c}} \left(t_1 + \frac{2c}{a}\right)
  \end{align*}
  where $t_0,t_1$ are chosen such that $Ae^{-\frac{at_0^2}{2b}}=1, b=ct_1$, which gives $t_0^2=({2b}/{a})\log A$ and $t_1={b}/{c}$. The second equality is due to the fact that $Ae^{-\frac{at^2}{b+ct}}$ is a probability which is no larger than 1. Plugging $t_0$ and $t_1$ to the equation above gives rise to
  \begin{align*}
    \EE(X^2)&\leq \frac{2b\log A}{a} + \frac{2b}{a} -\frac{2Ab}{a}e^{-\frac{ab}{2c^2}} + \frac{4Ab}{a}e^{-\frac{ab}{2c^2}} + \frac{8Ac^2}{a^2} e^{-\frac{ab}{2c^2}}\\
    &= \frac{2b}{a}(\log A+Ae^{-\frac{ab}{2c^2}}+1)+2e^{-\frac{ab}{2c^2}}\frac{4Ac^2}{a^2}\\
    &\leq \frac{2b}{a}(\log A+A+1)+\frac{8Ac^2}{a^2}.
  \end{align*}
\end{proof}
\section{Proof of Lemma \ref{thm.polyConverge}}
\label{appen.lem.polyConverge}
Lemma \ref{thm.polyConverge} is based on the following Lemma \ref{lem.taylor} (a generalization of \cite[Lemma 11.1]{gyorfi2006distribution} in one dimension to the multidimensional case) and Lemma \ref{prop.nonparametric} (\cite[Theorem 11.3]{gyorfi2006distribution}) below, which are standard results in non-parametric statistics.
\begin{lem}{\cite[Lemma 11.1]{gyorfi2006distribution}}
\label{lem.taylor}
Suppose $g: [-B,B]^d\rightarrow\RR$ is $(s,C_g)$-smooth with $s=k+\beta$ for some $k \in \NN_0$ and $\beta\in(0,1]$.
 %Then there exists a piecewise polynomial of degree $k$ with respect to an equispaced partition of $[0,1]^d$ to cubes with side length
  Let $g_k$ be the Taylor polynomial of $g$ at $\ba\in [-B,B]^d$ of degree $k$. Then
  $$
  |g(\bz)-g_k(\bz)|\leq \frac{C_gd^{k/2}\|\bz-\ba\|^{s}}{k!},
  $$
  for all $\bz$ near $\ba$.
\end{lem}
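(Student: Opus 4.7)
The plan is to reduce the multivariate Taylor remainder to the one-variable case by restricting $g$ to the line segment from $\ba$ to $\bz$, then apply the Hölder condition on the $k$-th partial derivatives together with the multinomial identity. Concretely, I would set $h(t) = g(\ba + t(\bz - \ba))$ for $t \in [0,1]$ so that $g(\bz) = h(1)$ and the restriction of the Taylor polynomial $g_k$ to the segment is exactly the degree-$k$ Taylor polynomial of $h$ at $0$. By the multivariate chain rule,
\[
h^{(k)}(t) \;=\; \sum_{|\balpha| = k} \frac{k!}{\balpha!}\,(\bz - \ba)^{\balpha}\, D^{\balpha} g\bigl(\ba + t(\bz - \ba)\bigr).
\]

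Using the integral form of the one-variable Taylor remainder and the identity $\int_0^1 (1-t)^{k-1}/(k-1)!\,dt = 1/k!$, I would rewrite
\[
g(\bz) - g_k(\bz) \;=\; \int_0^1 \frac{(1-t)^{k-1}}{(k-1)!}\,\bigl[h^{(k)}(t) - h^{(k)}(0)\bigr]\,dt,
\]
so that only differences of the top-order derivatives appear. Now the $(s,C_g)$-smoothness gives
\[
\bigl|D^{\balpha} g(\ba + t(\bz - \ba)) - D^{\balpha} g(\ba)\bigr| \;\le\; C_g\, t^{\beta}\,\|\bz - \ba\|^{\beta}
\]
for each $|\balpha| = k$, which bounds $|h^{(k)}(t) - h^{(k)}(0)|$ by $C_g\, t^{\beta}\|\bz - \ba\|^{\beta}$ times $\sum_{|\balpha|=k}\frac{k!}{\balpha!}|(\bz - \ba)^{\balpha}|$.

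The key combinatorial step is the multinomial identity
\[
\sum_{|\balpha| = k} \frac{k!}{\balpha!}\,\prod_{i=1}^{d} |z_i - a_i|^{\alpha_i} \;=\; \Bigl(\textstyle\sum_{i=1}^{d} |z_i - a_i|\Bigr)^{\!k},
\]
combined with Cauchy--Schwarz $\sum_i |z_i - a_i| \le \sqrt{d}\,\|\bz - \ba\|$, which produces the factor $d^{k/2}\|\bz - \ba\|^k$. Substituting back and evaluating $\int_0^1 (1-t)^{k-1} t^{\beta}/(k-1)!\,dt \le \int_0^1 (1-t)^{k-1}/(k-1)!\,dt = 1/k!$ yields the claimed bound $C_g d^{k/2}\|\bz - \ba\|^{s}/k!$. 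I expect the main obstacle to be bookkeeping: tracking the $k!$, $\balpha!$, and $\binom{d+k}{d}$-type factors so that the $d^{k/2}$ emerges cleanly from the multinomial+Cauchy--Schwarz step rather than a looser bound like $d^k$ or $\binom{d+k-1}{k}$.
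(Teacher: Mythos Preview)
Your proposal is correct and follows essentially the same route as the paper: both use the integral form of the Taylor remainder to isolate differences $D^{\balpha}g(\ba+t(\bz-\ba)) - D^{\balpha}g(\ba)$, apply the $(s,C_g)$-H\"older condition, and then combine the multinomial identity with $\|\cdot\|_1 \le \sqrt{d}\,\|\cdot\|_2$ to extract the $d^{k/2}$ factor. The only cosmetic difference is that you package the computation via the one-variable function $h(t)=g(\ba+t(\bz-\ba))$ whereas the paper works directly in multivariate notation; your handling of the $t^\beta$ factor is in fact slightly sharper before you bound it away, but the final estimate is identical.
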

\begin{proof}[\bf Proof of Lemma \ref{lem.taylor}]
We denote the multi-index $\boldsymbol\alpha = (\alpha_1,\ldots,\alpha_d)$ and let $\boldsymbol\alpha ! = \alpha_1 ! \cdots \alpha_d !$, $\bz^{\boldsymbol\alpha}=z_1^{\alpha_1}\cdots z_d^{\alpha_d}$ for $\bz=(z_1,...,z_d)$. Denote the partial derivative $D^{\boldsymbol\alpha}:=\frac{\partial^{\alpha_1+\ldots+\alpha_d} }{\partial x_1^{\alpha_1}\cdots \partial x_d^{\alpha_d}}$.
The Taylor expansion of $g$ at $\ba$ gives
  \begin{align*}
    &g(\bz)- g_k(\bz)\\
     =& g(\bz)-g_{k-1}(\bz)-\sum_{|\boldsymbol\alpha|=k} \frac{D^{\boldsymbol\alpha}g (\ba)(\bz-\ba)^{\boldsymbol\alpha}}{\boldsymbol\alpha!}\\
     =&\sum_{|\boldsymbol\alpha|=k} \frac{k}{\boldsymbol\alpha!} \left[ \int_{0}^1 (1-t)^{k-1} D^{\boldsymbol\alpha}g(\ba+t(\bz-\ba))dt\right] (\bz-\ba)^{\boldsymbol\alpha} - \sum_{|\boldsymbol\alpha|=k} \frac{(D^{\boldsymbol\alpha}g)(\ba)(\bz-\ba)^{\boldsymbol\alpha}}{\boldsymbol\alpha!}\\
     =&\sum_{|\boldsymbol\alpha|=k} \frac{(\bz-\ba)^{\boldsymbol\alpha}}{\boldsymbol\alpha!}k  \int_0^1 \left[ (1-t)^{k-1}D^{\boldsymbol\alpha}g(\ba+t(\bz-\ba)) - (1-t)^{k-1} D^{\boldsymbol\alpha}g (\ba) \right] dt \\
    =& \sum_{|\boldsymbol\alpha|=k} \frac{(\bz-\ba)^{\boldsymbol\alpha}}{\boldsymbol\alpha!}k  \int_0^1 (1-t)^{k-1}\left[D^{\boldsymbol\alpha}g(\ba+t(\bz-\ba)) - (D^{\boldsymbol\alpha}g)(\ba)\right]dt.
      \end{align*}
From this, and the assumption that $g$ is $(s,C_g)$ smooth, one obtains
   \begin{align*}
    &\left| g(\bz)- g_k(\bz) \right| \\
     \leq &\sum_{|\boldsymbol\alpha|=k} \frac{|(\bz-\ba)^{\boldsymbol\alpha}|}{\boldsymbol\alpha!}k  \int_0^1 (1-t)^{k-1}C_g\|\bz-\ba\|^{\beta}dt \\
    =& C_g\|\bz-\ba\|^{\beta} \sum_{|\boldsymbol\alpha|=k}\frac{|(\bz-\ba)^{\boldsymbol\alpha}|}{\boldsymbol\alpha!}\\
    =&  \frac{C_g\|\bz-\ba\|^{\beta}}{k!} \sum_{|\boldsymbol\alpha|=k}\frac{k!}{\boldsymbol\alpha!}|(\bz-\ba)^{\boldsymbol\alpha}| \\
     =&\frac{C_g\|\bz-\ba\|^{\beta}}{k!}\left(\sum_{i=1}^d |z_i-a_i|\right)^k \quad \mbox{by the multinomial theorem} \\
     \leq& \frac{C_g\|\bz-\ba\|^{\beta}}{k!}\|\bz-\ba\|_1^k\\
     \leq& \frac{C_g\|\bz-\ba\|^{\beta}}{k!}\left(\sqrt{d}\|\bz-\ba\|\right)^k\\
     =&\frac{C_gd^{k/2}\|\bz-\ba\|^{s}}{k!}.
  \end{align*}
\end{proof}

We next introduce a standard result (\cite[Theorem 11.3]{gyorfi2006distribution}) in nonparametric statistics.

\begin{lem}{\cite[Theorem 11.3]{gyorfi2006distribution}}
\label{prop.nonparametric}
  Let $\{\bz_i\}_{i=n+1}^{2n}$ be i.i.d. sampled from a probability measure $\mu$ such that $\mathrm{supp}(\mu)\subseteq[-B,B]^d$ and let $\{y_i\}_{i=n+1}^{2n}$ be sampled from the regression model
  $$y = h(\bz)+\zeta$$ where $\EE[ \zeta|\bz] = 0$.
Suppose $\|h\|_{\infty} < +\infty$ and $\sup_{\bz}\var(\zeta|\bz) \le\bar{\sigma}^2<\infty$.
  Let $\mathcal{F}$ be a linear space of functions from $\RR^d$ to $\RR$, and $\widetilde{h}$ be the estimator given by
  $$\widetilde h = T_{\|h\|_{\infty}} \bar h \quad \text{ where }\quad \bar h = \argmin_{p \in \calF} \frac 1 n\sum_{i=n+1}^{2n} |p(\bz_i) - y_i|^2.$$
   Then
  \begin{align}
     &\EE\int|\widetilde{h}(\bz)-h(\bz)|^2\mu(d\bz)\nonumber\\
   \leq  &c\cdot \max(\bar{\sigma}^2,\|h\|_{\infty}^2) \frac{ {\rm dim}(\mathcal{F})\log n}{n} +8 \inf_{p \in \calF} \int |p(\bz)- h(\bz)|^2 \mu(d\bz).
    \label{eq.nonparametric}
  \end{align}
  for some universal constant $c$.
\end{lem}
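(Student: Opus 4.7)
The plan is to reduce the lemma to Lemma \ref{prop.nonparametric} (the classical nonparametric LS bound for zero-mean noise) by absorbing the bias of $\widetilde\xi_i$ into an auxiliary regression target, and then pay for the shift through $\bnoise$-sized penalties. Define $h(\bz) := g(\bz) + \eta(\bz)$ and $\zeta_i := \widetilde\xi_i - \eta(\bz_i)$. The data then obeys $y_i = h(\bz_i) + \zeta_i$ with $\EE[\zeta_i\mid\bz_i]=0$, so the hypotheses of Lemma \ref{prop.nonparametric} are met with true regression function $h$. Since the LS minimizer $\widetilde g$ over $\calF_k$ is determined by $\{(\bz_i,y_i)\}$ alone and is agnostic to how we label the regression function, $\widetilde g$ is simultaneously the LS estimator of $h$, and my estimator $\widehat g = T_M\widetilde g$ differs from the ``ideal'' $T_{\|h\|_\infty}\widetilde g$ by at most $(\|h\|_\infty - M)_+ \le \sqrt{\bnoise}$ pointwise.

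Before invoking the lemma I establish two parameter bounds that explain the constants appearing in the statement. Using $|\eta(\bz)|\le L_g B\|\hPhi-\Phi\|$ from \eqref{eqeta}, the triangle inequality yields $\|h\|_\infty \le M + \sqrt{\bnoise}$, hence $\|h\|_\infty^2 \le 2M^2 + 2\bnoise$. For the conditional variance, $\xi$ is independent of $\bx$ with $\var(\xi)\le\sigma^2$, so the two parts of $\zeta$ decouple:
\begin{align*}
\var(\zeta\mid\bz) \;=\; \var\bigl(g(\Phi^T\bx)-g(\hPhi^T\bx)\mid\bz\bigr) + \var(\xi) \;\le\; L_g^2 B^2\|\hPhi-\Phi\|^2 + \sigma^2 \;=\; \bnoise + \sigma^2.
\end{align*}
This explains the factor $\max(\sigma^2+\bnoise,\, 2M^2+2\bnoise)$ in the conclusion. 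The bias-variance split $(a+b)^2\le 2a^2+2b^2$ with $a=\widehat g-h$ and $b=\eta$ now reduces the problem to bounding $\EE\int|\widehat g - h|^2 d\mu$ at the cost of a $2\bnoise$ additive loss,
\begin{align*}
\int|\widehat g - g|^2 d\mu \;\le\; 2\int|\widehat g-h|^2 d\mu + 2\bnoise,
\end{align*}
after which Lemma \ref{prop.nonparametric} (with a minor adjustment to account for truncating at $M$ rather than $\|h\|_\infty$, which costs at most $\bnoise$ by the pointwise truncation comparison) bounds $\EE\int|\widehat g - h|^2 d\mu$ by $c\max(\sigma^2+\bnoise,\,2M^2+2\bnoise)\binom{d+k}{d}K^d(\log n)/n + 8\inf_{p\in\calF_k}\int|p-h|^2 d\mu$, where $\binom{d+k}{d}K^d$ is the dimension of $\calF_k$.

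For the approximation term, I split $\inf_{p\in\calF_k}\int|p-h|^2 d\mu \le 2\inf_{p\in\calF_k}\int|p-g|^2 d\mu + 2\bnoise$. On each of the $K^d$ cubes of diameter $2B\sqrt{d}/K$, I take $p$ to be the degree-$k$ Taylor polynomial of $g$ at the cube's center; Lemma \ref{lem.taylor} gives $|g-p|^2 \le C_g^2 d^{s+k}B^{2s}/((k!)^2K^{2s})$ pointwise, so $\inf_{p\in\calF_k}\int|p-g|^2 d\mu$ inherits the same bound. Collecting the $\bnoise$-contributions from the bias-variance split, the truncation mismatch, and the approximation split, and consolidating the stochastic factors, produces the stated coefficients $2c$, $4C_g^2 B^{2s}d^{s+k}/((k!)^2K^{2s})$ and $6\bnoise$. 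The main obstacle is constant bookkeeping: each application of $(a+b)^2\le 2a^2+2b^2$ inflates constants by a factor of two, so a naive nesting of splits overshoots the stated constants. A tight proof therefore requires either invoking Lemma \ref{prop.nonparametric} in a form whose ``approximation'' term directly references $g$ rather than $h$, or re-running its proof with the biased model $y=h+\zeta$ to save one intermediate triangle inequality.
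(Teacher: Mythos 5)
Your proposal does not prove the statement it was assigned. The statement is Lemma \ref{prop.nonparametric} itself: the general oracle inequality for truncated least squares over an arbitrary linear space $\calF$, with estimation error $c\max(\bar\sigma^2,\|h\|_\infty^2)\,{\rm dim}(\calF)\log n/n$ and approximation error $8\inf_{p\in\calF}\int|p-h|^2\,\mu(d\bz)$. Your argument instead takes this inequality as given (``after which Lemma \ref{prop.nonparametric} \ldots bounds $\EE\int|\widehat g-h|^2\,d\mu$'') and uses it to derive the bound of Lemma \ref{thm.polyConverge}, i.e.\ the downstream statement about $\EE\int|\hg(\bz)-g(\bz)|^2\,\mu(d\bz)$ with the $\bnoise$ corrections. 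As a proof of Lemma \ref{prop.nonparametric} this is circular; as mathematics it is a proof of a different lemma. The actual content of Lemma \ref{prop.nonparametric} --- which the paper does not reprove but imports from \cite[Theorem 11.3]{gyorfi2006distribution} --- requires machinery entirely absent from your writeup: covering-number (or VC-dimension) bounds for the truncated class $T_{\|h\|_\infty}\calF$ in terms of ${\rm dim}(\calF)$, a symmetrization and peeling argument controlling the uniform deviation between the empirical and population $L^2$ risks over that class, and a Bernstein-type concentration step; these are what produce the $\log n$ factor, the constant $8$ on the approximation term, and the $\max(\bar\sigma^2,\|h\|_\infty^2)$ scaling.

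That said, what you wrote is essentially the paper's own proof of Lemma \ref{thm.polyConverge}: the same decomposition $h=g+\eta$, $\zeta=\widetilde\xi-\eta$ with $\EE[\zeta|\bz]=0$, the same bounds $\var(\zeta|\bz)\le\sigma^2+\bnoise$ and $\|h\|_\infty\le M+\sqrt{\bnoise}$, the same use of Lemma \ref{lem.taylor} for the piecewise Taylor approximation, and the same $(a+b)^2\le 2a^2+2b^2$ bookkeeping yielding the constants $2c$, $4C_g^2B^{2s}d^{s+k}/((k!)^2K^{2s})$ and $6\bnoise$. If the intended target had been Lemma \ref{thm.polyConverge}, your outline would be correct up to the minor truncation-level issue you flag (truncating at $M$ versus $\|h\|_\infty$), which the paper also elides. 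But for the statement actually posed, the core argument is missing.
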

In (\ref{eq.nonparametric}), the mean squared error is decomposed into two terms: the first term captures the variance and the second term estimates the bias.
We consider piecewise polynomial approximation with order no more than $k$ such that $\mathcal{F}=\mathcal{F}_k$.
\begin{proof}[\bf Proof of Lemma \ref{thm.polyConverge}]
In order to apply Lemma \ref{prop.nonparametric}, we express the regression model in \eqref{eqmodelg} as
$$y_i = \underbrace{g(\bz_i)+\eta(\bz_i)}_{h(\bz_i)} +\underbrace{ \widetilde \xi_i - \eta(\bz_i)}_{\zeta_i}$$
with $\eta$ defined in \eqref{eqeta}. The function $h$ is bounded: $\|h\|_\infty \le \|g\|_\infty + \|\eta\|_\infty \le M + C_gB\|\tPhi-\Phi\|$. The noise satisfies $\EE[\zeta|\bz] = 0$ and
 \begin{align*}
 \sup_{\bz = \tPhi^\top \bx}\var( \zeta |\bz) &= \sup_{\bz = \tPhi^\top \bx} \var\left(\xi +g(\Phi^\top\bx)-g(\tPhi^\top\bx) -  \EE[g(\Phi^\top\bx)-g(\tPhi^\top\bx) | \tPhi^\top \bx] \right)
 \\
 &= \sup_{\bz = \tPhi^\top \bx} \var(\xi) +\var[g(\Phi^\top\bx)-g(\tPhi^\top\bx)  | \tPhi^\top \bx  =\bz ]
 \\
 & \le \sigma^2  +C_g^2 B^2 \|\tPhi -\Phi\|^2= \sigma^2  +\bnoise,
 \end{align*}
 where $\bnoise$ is defined in (\ref{eq.bnoise}).
 %where the upper bound is denoted by $\tilde\sigma$ in \eqref{eq.tildSigmaBound}.
We apply Lemma \ref{prop.nonparametric} with
 $\mathcal{F}=\mathcal{F}_k$, and then
  \begin{align*}
    &\EE \int|\widetilde{g}(\bz)-g(\bz)-\eta(\bz)|^2\mu(d\bz)\\
    \leq& c\cdot \max(\sigma^2  +\bnoise,2M^2+2\bnoise) \frac{ {\rm dim}(\mathcal{F}_k)\log n}{n}
    +8 \inf_{p \in \calF_k} \int |p(\bz)- g(\bz)-\eta(\bz)|^2 \mu(d\bz).
  \end{align*}
  for some universal $c$. Here ${\rm dim}(\mathcal{F}_k)$ is the dimension of $\calF_k$ and hence ${\rm dim}(\mathcal{F}_k) = \binom{d+k}{d}K^d$. Let $g_k$ be the piecewise Taylor polynomial of $g$ with degree no more than $k = \lceil s\rceil-1$ on the partition of $[-B,B]^d$ into $K^d$ cubes, where the Taylor expansion is at the center of each cube. The second term can be estimated from Lemma \ref{lem.taylor} as
  \begin{align*}
  &\inf_{p\in \mathcal{F}_k} \int |p(\bz)-g(\bz)-\eta(\bz)|^2\mu(d\bz)
  \le  \sup_{\bz} |g_k(\bz)-g(\bz)-\eta(\bz)|^2\\
  \leq& 2\left(\frac{C_gd^{k/2}}{k!}\left(\frac{Bd^{1/2}}{K}\right)^s\right)^2 + 2 \|\eta\|_\infty^2
   \le \frac{2C_g^2B^{2s}d^{s+k}}{(k!)^2K^{2s}} + 2\bnoise.
  \end{align*}
 Therefore
  \begin{align*}
    &\EE \int|\widetilde{g}(\bz)-g(\bz)-\eta(\bz)|^2\mu(d\bz)\\
    \leq& c\cdot \max(\sigma^2  +\bnoise,2M^2+2\bnoise) \frac{ \binom{d+k}{d}K^d\log n}{n} +\frac{2C_g^2B^{2s}d^{s+k}}{(k!)^2K^{2s}} + 2\bnoise.
  \end{align*}
  We have
  \begin{align*}
    &\EE \int|\widetilde{g}(\bz)-g(\bz)|^2\mu(d\bz)=\EE \int|\widetilde{g}(\bz)-g(\bz)-\eta(\bz)+\eta(\bz)|^2\mu(d\bz)\\
    \leq& \EE \left(2\int|\widetilde{g}(\bz)-g(\bz)-\eta(\bz)|^2\mu(d\bz)+2\int|\eta(\bz)|^2\mu(d\bz)\right)\\
    \leq& 2c\cdot \max(\sigma^2  +\bnoise,2M^2+2\bnoise) \frac{ \binom{d+k}{d}K^d\log n}{n} +\frac{4C_g^2B^{2s}d^{s+k}}{(k!)^2K^{2s}} + 6\bnoise.
  \end{align*}
  %Choosing $K$ according to (\ref{eq.K}) gives rise to
%  \begin{align*}
%    &\EE \int|\widehat{g}(\bz)-g(\bz)|^2\mu(d\bz) \\
%    &\le \left(c \binom{d+k}{d} +\frac{16C_g^2B^{2s}d^{s+k}}{(k!)^2}\right)\left(\frac{C_8\log n}{n}\right)^{\frac{2s}{2s+d}} + 16 C_7C_g^2 B^2n^{-1}\\
%    &=C\left(\frac{C_8(\log(n)+1)}{n}\right)^{\frac{2s}{2s+d}}+ 16 C_7C_g^2 B^2n^{-1}
%  \end{align*}
%  with $C$ depending on $d,k,s,C_g,B,M$.
\end{proof}

\commentout{
\section{Proof of \eqref{phiproj}}
\label{appen.lem.canonicalVector}
\begin{proof}[\bf Proof of \eqref{phiproj}]
We have
\begin{align*}
  \|\hPhi-\Phi\|^2&\leq \sum_{i=1}^d \|\hbphi_i-\bphi_i\|^2\\
  &=\sum_{i=1}^d \hbphi_i^\top\hbphi_i+\bphi_i^\top\bphi_i-\hbphi_i^\top\bphi_i-\bphi_i^\top\hbphi_i\\
  &=\sum_{i=1}^d 2-2\cos(\theta_i) \leq \sum_{i=1}^d 2(1-\cos^2(\theta_i))\\
  &=\sum_{i=1}^d 2\sin^2(\theta_i) \leq 2d\sin^2(\theta_1)\\
  &= 2d \|\proj_{\hPhi}-\proj_{\Phi}\|^2.
\end{align*}
The last equality comes from \cite[Theorem 5.5]{stewart1990matrix}.
\end{proof} 
}

\end{document}